\font\sans=cmss12
\font\sc=cmcsc10 at 12truept 
\def \SaK{\text{\sans K}}
\def \ScptA{\mathscr{A}}
\def \ScptB{\mathscr{B}}
\def \ScptS{\mathscr{S}}
\def \Ak{\mathscr{A}}
\def \Bk{\mathscr{B}}
\def \Ck{\mathscr{C}}
\def \Fk{\mathscr{F}}
\def \Gk{\mathscr{G}}
\def \Hk{\mathscr{H}}
\def \Lk{\mathscr{L}}
\def \Mk{\mathscr{M}}
\def \Nk{\mathscr{N}}
\def \Pk{\mathscr{P}}
\def \Sk{\mathscr{S}}
\def \Uk{\mathscr{U}}
\def \Vk{\mathscr{V}}
\def \Xk{\mathscr{X}}
\def \Zk{\mathscr{Z}}
\def \BFq{\mathcal{B}}
\def \GFq{\mathcal{G}}
\def \HFq{\mathcal{H}}
\def \LFq{\mathcal{L}}
\def \MFq{\mathcal{M}}
\def \PFq{\mathcal{P}}
\def \QFq{\mathcal{Q}}
\def \RFq{\mathcal{R}}
\def \TFq{\mathcal{T}}
\def \UFq{\mathcal{U}}
\def \VFq{\mathcal{V}}
\def \WFq{\mathcal{W}}
\def \ZFq{\mathcal{Z}}
\def \SamB{\text{{\fontfamily{phv}\selectfont{B}}}}
\def \SamG{\text{{\fontfamily{phv}\selectfont{G}}}}
\def \SamP{\text{{\fontfamily{phv}\selectfont{P}}}}
\def \SamS{\text{{\fontfamily{phv}\selectfont{S}}}}
\def \SamZ{\text{{\fontfamily{phv}\selectfont{Z}}}}
\def \mB{\text{\rm{\bf{B}}}}
\def \mG{\text{\rm{\bf{G}}}}
\def \mH{\text{\rm{\bf{H}}}}
\def \mM{\text{\rm{\bf{M}}}}
\def \mP{\text{\rm{\bf{P}}}}
\def \mS{\text{\rm{\bf{S}}}}
\def \mU{\text{\rm{\bf{U}}}}
\def \mk{k}
\def \mK{K}
\def \myadm{\text{\rm{{\,}sm}}}
\def \Ad{\text{\rm{Ad}}}
\def \myBall{\text{\rm{Ball}}}
\def \myBruhat{\text{\rm{Bru}}}
\def \myCoxeter{\text{\rm{W}}}
\def \myFT{\text{\rm{FT}}}
\def \myGal{\text{\rm{Gal}}}
\def \mygrad{\text{\rm{grad}}}
\def \myht{\text{\rm{ht}}}
\def \myInf{\text{\rm{Inf}}}
\def \myId{\text{\rm{Id}}}
\def \mypfacet{\text{\rm{recess}}}
\def \mypfacets{\text{\rm{recesses}}}
\def \myopp{\text{\rm{opp}}}
\def \myrank{\text{\rm{rank}}}
\def \myred{\text{\rm{red}}}
\def \myun{\text{\rm{un}}}
\def \myVect{\text{\rm{Vect}}}
\def \End{\text{\rm{End}}}
\def \meas{\text{\rm{meas}}}
\def \sgn{\text{\rm{sgn}}}
\def \trace{\text{\rm{trace}}}
\def \bC{{\mathbb C}}
\def \bF{{\mathbb F}}
\def \bN{{\mathbb N}}
\def \bR{{\mathbb R}}
\def \bZ{{\mathbb Z}}
\def \fkg{{\mathfrak g}}
\def \frcA{ {\text{\rm{\fontsize{9}{10}{\fontfamily{frc}\selectfont{A}}}}} }
\def \frcL{ {\text{\rm{\fontsize{9}{10}{\fontfamily{frc}\selectfont{L}}}}} }
\def \frcQ{ {\text{\rm{\fontsize{9}{10}{\fontfamily{frc}\selectfont{Q}}}}} }
\def \frcR{ {\text{\rm{\fontsize{9}{10}{\fontfamily{frc}\selectfont{R}}}}} }
\def \frcLm{ {\text{\rm{\fontsize{7}{8}{\fontfamily{frc}\selectfont{L}}}}} }
\def \frcSm{ {\text{\rm{\fontsize{7}{8}{\fontfamily{frc}\selectfont{S}}}}} }
\def \frcAs{ {\text{\rm{\fontsize{5}{6}{\fontfamily{frc}\selectfont{A}}}}} }
\def \frcBs{ {\text{\rm{\fontsize{5}{6}{\fontfamily{frc}\selectfont{B}}}}} }
\def \frcLs{ {\text{\rm{\fontsize{5}{6}{\fontfamily{frc}\selectfont{L}}}}} }
\def \frcQs{ {\text{\rm{\fontsize{5}{6}{\fontfamily{frc}\selectfont{Q}}}}} }
\def \frcRs{ {\text{\rm{\fontsize{5}{6}{\fontfamily{frc}\selectfont{R}}}}} }
\def \frcSs{ {\text{\rm{\fontsize{5}{6}{\fontfamily{frc}\selectfont{S}}}}} }
\def \myFFsl2{{\mathfrak s}{\mathfrak l}  (2,{\mathbb F}_{q})}
\def \myFFGL2{{\text{\rm{GL}}}  (2,{\mathbb F}_{q})}
\def \myFFSL2{{\text{\rm{SL}}}  (2,{\mathbb F}_{q})}
\def \myauthor{Dan Barbasch, Dan Ciubotaru and Allen Moy}
\def\theequation{\ifnum\value{subsection}>0\relax
\thesubsection.\arabic{equation}\relax
\else\ifnum\value{section}>0\relax
\thesection.\arabic{equation}\relax \else\arabic{equation}\fi\fi}
\newtheorem{thm}[equation]{Theorem}
\newtheorem{lemma}[equation]{Lemma}
\newtheorem{prop}[equation]{Proposition}
\newtheorem*{thm*}{Theorem}
\newtheorem*{prop*}{Proposition}
\newtheorem{cor}[equation]{Corollary}
\newcommand \reBna{B}
\newcommand \reBD{BD}
\newcommand \reBKV{BKV}
\newcommand \reDta{Dt}
\newcommand \reDna{Dn}
\newcommand \reHCb{HC}
\newcommand \reIM{IM}
\newcommand \reKota{K}
\newcommand \reMS{MS}
\newcommand \reMPa{MPa}
\newcommand \reMPb{MPb}
\newcommand \reSSa{SS}
\newcommand \reTa{T}
\newcommand{\xRightarrow}[2][]{\ext@arrow 0359\Rightarrowfill@{#1}{#2}}
\newcommand*{\medcap}{\mathbin{\scalebox{1.3}{\ensuremath{\cap}}}}
\newcommand*{\medcup}{\mathbin{\scalebox{1.3}{\ensuremath{\cup}}}}
\begin{document}
 
{\large{
 

\noindent{\ } \hfill  {\sc October 2016}

\vskip 0.30in

\title[An Euler-Poincar{\'{e}} formula for a depth zero Bernstein projector]  
{An Euler-Poincar{\'{e}} formula for a depth zero\\  Bernstein projector}
\markboth{\myauthor}{An Euler-Poincar{\'{e}} formula for a depth zero Bernstein projector}

\author{\myauthor}

\address{Department of Mathematics, Malott Hall, Cornell University, Ithaca, NY 14853--0099, USA \\
Email:\tt barbasch{\char'100}math.cornell.edu}

\address{Mathematical Institute, Andrew Wiles Building, University of Oxford \\
  Oxford, OX2 6GG, UK \\
Email:\tt dan.ciubotaru{\char'100}maths.ox.ac.uk}

\address{Department of Mathematics, The Hong Kong University of Science and Technology \\
Clear Water Bay Road, Hong Kong \\
Email:\tt amoy{\char'100}ust.hk}


\thanks{The first author is partly supported by NSA grant H98230-16-1-0006.}

\thanks{The second author is partly supported by United Kingdom EPSRC grant EP/N033922/1.}

\thanks{The third author is partly supported by Hong Kong Research Grants Council grant CERG {\#}603813.}

\subjclass{Primary  22E50, 22E35}

\keywords{Bernstein center, Bernstein projector, Bruhat--Tits building,  depth, distribution, equivariant system, essentially compact, Euler--Poincar{\'{e}}, idempotent, resolution}

\begin{abstract}

Work of Bezrukavnikov--Kazhdan--Varshavsky uses an equivariant system of trivial idempotents of Moy--Prasad groups to obtain an Euler--Poincar{\'{e}} formula for the r--depth Bernstein projector.  We establish an Euler--Poincar{\'{e}} formula for the projector to an individual depth zero Bernstein component in terms of an equivariant system of Peter--Weyl idempotents of parahoric subgroups $\Gk_{F}$ associated to a block of the reductive quotient $\Gk_{F}/\Gk^{+}_{F}$.

\end{abstract}

 
\vskip 0.30in 
\maketitle 
 

\section{Introduction}

\medskip
The results expounded here are the merger of several themes in the representation theory of reductive p-adic groups.  Suppose $\mk$ is a non-archimedean local field, and $\mG$ is a connected reductive linear algebraic group defined over $\mk$.  Let $\Gk = \mG (\mk )$ denote the group of $\mk$-rational points, and let $\ScptB = \ScptB (\Gk )$ denote the Bruhat--Tits bulding of $\Gk$. 

\medskip

Two themes introduced in the 1980s were Euler-Poincar{\'{e}} functions [{\reKota}] and the Bernstein center [{\reBna},{\reBD}].   

\smallskip

\begin{itemize}
  \item[$\bullet$] When $\Gk$ is semisimple, simply connected, Kottwitz selected a set ${\frcSm}$ of representatives for the orbits of $\Gk$ on the facets of $\ScptB$, and defined an Euler-Poincar{\'{e}} function $f_{\text{\rm{EP}}} \in C^{\infty}_{c}(\Gk )$ as $f_{\text{\rm{EP}}} \, = \, {\underset { \sigma \in {\frcSs}} \sum} (-1)^{\dim (\sigma )} \ {\frac{1}{{\text{\rm{Stab}}}(\sigma )}} \ \sgn_{\sigma}$ (see [{\reKota}] for the description of the character $\sgn_{\sigma}$ of ${\text{\rm{Stab}}}(\sigma )$ and other normalizations).  It is obvious that changing the set $\frcSm$ changes $f_{\text{\rm{EP}}}$, but not its orbital integrals.  Kottwitz showed the usefulness of $f_{\text{\rm{EP}}}$ as a function to enter into the trace formula.

\smallskip

  \item[$\bullet$]  The Bernstein center ${\mathcal Z} = {\mathcal Z}(\Gk )$ of $\Gk$ is a commutative algebra (with unity).  Its geometrical realization is as the algebra of $\Gk$-invariant  essentially compact distributions on $\Gk$.  A distribution is essentially compact if $\forall \, f \in C^{\infty}_{0}(\Gk )$, the convolutions $D \star f$ and $f \star D$ are in $C^{\infty}_{0}(\Gk )$.  If $(\pi , V_{\pi})$ is a smooth representation of $\Gk$, one can, by integration, canonically obtain an algebra representation $\pi_{\mathcal Z} \, : \, {\mathcal Z}(\Gk ) \, \longrightarrow \, \End_{\Gk}(V_{\pi})$.  When $\pi$ is irreducible, by Schur's Lemma, each $\pi_{{\mathcal Z}} (D)$ \, ($D \in {\mathcal Z}$) \, is a scalar.

\smallskip

Let $\widehat{\Gk}^{\myadm}$ denote the smooth dual (equivalence
classes of smooth irreducible representations).  We write the map $\{ \pi \} \rightarrow \pi_{{\mathcal Z}} (D)$ as a map \, $\myInf_{D} : \widehat{\Gk}^{\myadm} \longrightarrow \bC$ \, given by $\myInf_{D} (\{ \pi \} ) = \pi_{\mathcal Z}(D)$.  The smooth dual is naturally topologized with the Fell topology, a non-Hausdorff topology.  There is natural Hausdorff topological quotient $\Omega (\Gk )$ of $\widehat{\Gk}^{\myadm}$.  The map $\myInf_{D}$ factors to a map $\Omega (\Gk ) \longrightarrow \bC$ (that we also denote as $\myInf_{D}$).  The points  of $\Omega ( \Gk )$ can be parametrized as equivalence classes of pairs $(\Mk , \sigma )$, where $\Mk$ is a Levi subgroup of $\Gk$, and $\sigma$ is an irreducible cuspidal representation of $\Mk$.  The equivalence relation comes from the adjoint action on the Levi component.  The equivalence class of a pair $(\Mk , \sigma )$ is denoted $[\Mk , \sigma ]$.  For a fixed Levi subgroup $\Mk$, the complex group $X_{\myun}(\Mk )$ of unramified characters acts on the set of elements $[\Mk , \sigma ]$ by twisting the representation component (see [{\reBna}:ChII]), and the orbit (denoted $\Omega ([\Mk , \sigma ])$) is a Bernstein component.  Thus,   $\Omega (\Gk )$ is partitioned into Bernstein components.  The component $\Omega ([\Mk , \sigma ])$ inherits a complex algebraic structure from $X_{\myun}(\Mk )$. The restriction $(\myInf_{D})_{|_{\Omega}}$ of the function $\myInf_{D}$ to a Bernstein component $\Omega = \Omega ([\Mk , \sigma ])$ is a regular function.  Let ${\mathcal R}(\Omega )$ denote the algebra of regular functions on $\Omega$.  In [{\reBD}], it is shown that the map $\myFT_{\Omega} \, : \,  {\mathcal Z}(\Gk ) \rightarrow {\mathcal R}(\Omega )$ defined as $\myFT_{\Omega} (D) = (\myInf_{D})_{|_{\Omega}}$ is a surjective algebra homomorphism, and there is an ideal ${\mathcal I}_{\Omega}$ of ${\mathcal Z}(\Gk )$ so that $\myFT_{\Omega}$ on ${\mathcal I}_{\Omega}$ is an isomorphism, while $\myFT_{\Omega}$ on ${\mathcal I}_{{\Omega}'}$ ($\Omega \neq \Omega'$) is zero.  So, ${\mathcal Z}(\Gk )$ is a product of the ideals  ${\mathcal I}_{\Omega}$.  The unique element $P(\Omega ) \in {\mathcal Z}(\Gk)$ satisfying $(\myInf_{P(\Omega )})_{|_{\Omega'}} = \delta_{\Omega, \Omega'}$ is called the projector of the Bernstein component $\Omega$.   At the time (1980's) extremely little was known explicitly about the  distribution  $P(\Omega )$.  The most illuminating result at that time was a 1976 result of Deligne [{\reDna}].  Suppose $\Gk$ has compact center.  Then, Deligne's result is: \ The support of the character of an irreducible cuspidal representation is in the set of compact elements (those elements which belong to a compact subgroup) of $\Gk$.  This was extended by Dat [{\reDta}] in 2003 to the statement that the projector $P(\Omega )$ of a Bernstein component $\Omega$ has support in the compact elements of $\Gk$.
\end{itemize}

\medskip

In the 1990's, exploitation of the Bruhat--Tits building achieved advances in two directions.

\begin{itemize}
\item[$\bullet$] Moy--Prasad [{\reMPa},{\reMPb}] used points in $\ScptB$ to define subgroups of $\Gk$ and lattices of $\fkg \, = \, {\text{\rm{Lie}}}(\Gk )$ which satisfy descent properties.  In particular, these subgroups and lattices allow one to attach to any irreducible smooth representation $(\pi , V_{\pi} )$ a non-negative rational number $\rho (\pi )$ called the depth.  The application of the parabolic induction functor or the Jacquet functor to an irreducible representation, yields a representation whose constituents all have the same depth as the input.   Thus, all the irreducible representations attached to a Bernstein component $\Omega$ have the same depth, i.e., one can define the depth $\rho (\Omega )$ of a component $\Omega$.   It is clear from their definitions that the Moy--Prasad groups and lattices  are $\Gk$-equivariant objects of $\ScptB$.  
  
\medskip
  
\item[$\bullet$]  Schneider--Stuhler [{\reSSa}] attached to a smooth representation $(\pi , V_{\pi} )$ a $\Gk$-equivariant coefficient system $\gamma_{e}(V_{\pi})$.  To a facet $F$ of the building with parahoric subgroup $U_{F}$, and positive integer $e$, they define a subgroup $U_{F,e}$ (which, if $y$ is a generic point of $F$, is in fact the Moy--Prasad group $\Gk_{y,e}$).  The coefficient system is $\gamma_{e}(V_{\pi})(F) \, := \, V^{U_{F,e}}_{\pi}$.  The space of global sections with compact support in the facets of a given dimension $i$ is a projective smooth representation of $\Gk$.  Schneider--Stuhler used the standard boundary map to get a complex, and under suitable circumstances they proved the important result that this complex is a projective resolution of $V_{\pi}$.   
\end{itemize}

\smallskip

\noindent{No} serious attempt was made in the 1990's to synthesize these two directions together.

\bigskip

An important development made by Meyer--Solleveld [{\reMS}] in 2010 was to replace the coefficient systems of Schneider--Stuhler with idempotent operators $e_{\sigma}$ ($\sigma$ a facet in $\ScptB$) in ${\text{\rm{End}}}_{\bC}(V_{\pi})$.  The situation of Schneider-Stuhler can be recovered from Meyer--Solleveld by taking $e_{\sigma}$ to be the idempotent which projects to the space of $U_{\sigma , e}$-fixed vectors.  A key aspect of their idempotent approach is that the chain complex on $\ScptB$ attached to the idempotents has the property that its restriction to a finite polysimplicial convex subcomplex $\Sigma$ of $\ScptB$ is a resolution of the vector space ${\underset {x \in \Sigma^{\text{\rm{o}}} } \sum  } \ e_{x} (V_{\pi})$, where $\Sigma^{\text{\rm{o}}}$ is the set of vertices in $\Sigma$.  Most importantly, under certain assumptions on the system of idempotents (see [{\reMS}]),  they showed the operator \, ${\underset {\sigma \in \Sigma} \sum } \ (-1)^{\dim (\sigma )} \, e_{\sigma}$ \, is idempotent and projects $V_{\pi}$ to   ${\underset {x \in \Sigma^{\text{\rm{o}}} } \sum } \ e_{x} (V_{\pi})$. 

\bigskip

Work of Bezrukavnikov--Kazhdan--Varshavsky [{\reBKV}] in 2015 linked the Schneider-Stuhler and Meyer-Solleveld theme to Bernstein projectors.  For ease of exposition of their work, we assume $\Gk$ is absolutely quasisimple (see [{\reBKV}] for their more general situation).  They modified the Meyer-Solleveld approach:
\begin{itemize}
\item[(i)]  They replaced the idempotents in End$(V_{\pi})$ with idempotents $e_{F} = {\frac{1}{\meas (\Gk_{F,r^{+}})}} 1_{\Gk_{F,r^{+}}}$ in the Hecke algebra ${\mathcal H}(\Gk )$ \ ($C^{\infty}_{c}(\Gk )$ together with a choice of Haar measure).
\smallskip  
\item[(ii)] They considered an increasing family $\Sigma_{n}$ \, ($n \in {\bN}$) \, of finite convex subcomplexes whose union is the entire building.
\end{itemize}
\noindent{The} resulting idempotents  ${\underset {F \, \in \, \Sigma_{n}} \sum } \ (-1)^{\dim (F )} \, e_{F}$ have limit the depth $r$ Bernstein projector 
$$
P_{r} \ = {\underset {\rho (\Omega ) \le r } \sum } \ P(\Omega ) \ , 
$$
\noindent{and} furthermore, as a distribution, $P_{r}$ has a presentation as an Euler-Poincar{\'{e}} sum $P_{r} = {\underset {F \, \subset \, \ScptB} \sum } \ (-1)^{\dim (F)} e_{F}$.

\bigskip

Here, under the condition that the $\mk$-group $\mG$ is absolutely quasisimple, we further develop the new direction of [{\reBKV}].  We establish an Euler--Poincar{\'{e}} presentation of the projector for an arbitrary Bernstein component of depth zero.   The condition that $\mG$ is absolutely quasisimple has the simplifying convenience that the Bruhat--Tits building $\ScptB (\Gk)$ ($\Gk = \mG (\mk )$) is a simplicial complex. 

\smallskip

When $\Lk$ is a Levi subgroup of $\Gk$, let $\ScptB_{\Gk}(\Lk )$ to be the union of the apartments $\ScptA (\Sk )$ as $\Sk$ runs over the maximal split tori in $\Lk$.  The space $\ScptB_{\Gk}(\Lk )$ is the extended building of $\Lk$.  Suppose $\Omega ([\Mk , \pi ])$ is a depth zero Bernstein component.   It is known from [{\reMPb}] that there exists a pair consisting of:
\begin{itemize}
\item[(i)]  a facet $F$ in $\ScptB_{\Gk}(\Mk )$ satisfying $(\Mk \cap \Gk_{F})/(\Mk \cap \Gk^{+}_{F}) = \Gk_{F}/\Gk^{+}_{F}$, 
\item[(ii)]  an irreducible representation $\sigma$ of $\Mk_{F} := (\Mk \cap \Gk_{F})$ inflated from a cuspidal representation of the finite field group $(\Mk \cap \Gk_{F})/(\Mk \cap \Gk^{+}_{F}) = \Gk_{F}/\Gk^{+}_{F}$,
\end{itemize}
\noindent{so} that $\pi = {\text{\rm{c-Ind}}}^{\Mk}_{\Fk_{F}}(\tau )$, where $\Fk_{F}$ is the normalizer subgroup $N_{\Mk}(\Mk_{F} )$, and $\tau$ is an extension of $\sigma$.  Here, $\Gk^{+}_{F}$ is the maximal normal pro-p-subgroup of $\Gk_{F}$.  If $y$ is a generic point of $F$, so $\Gk_{F} = \Gk_{y,0}$, then $\Gk^{+}_{F} =  \Gk_{y,0^{+}}$.  The relation $(\Mk \cap \Gk_{F})/(\Mk \cap \Gk^{+}_{F}) = \Gk_{F}/\Gk^{+}_{F}$ means $\sigma$ is also canonically a representation of $\Gk_{F}$.  

\smallskip

Let $\Mk \Vk$ be a parabolic subgroup containing $\Mk$, and set $V_{\kappa} = {\text{\rm{Ind}}}^{\Gk}_{\Mk \Vk} (\pi )$.  If $E$ is any facet of $\ScptB$, it follows from [{\reMPb}] that a necessary and sufficient condition for the invariants $V^{\Gk^{+}_{E}}_{\kappa}$ to be nonzero is the existence of a facet $F'$ associate to $F$ \ (a facet $F'$ is associate to $F$ if there exists $g \in \Gk$, so that $(\Gk_{F'} \cap \Gk_{gF})$ surjects onto both $\Gk_{F'}/\Gk^{+}_{F'}$ and $\Gk_{gF}/\Gk^{+}_{gF}$) which contains $E$.  We define idempotents as follows:
\smallskip
$$
e_{E} \ = \ 
\begin{cases}
\quad 0  &{\text{\rm{{\hskip 0.37in}when $V^{\Gk^{+}_{E}}_{\kappa} \, = \, \{ \, 0 \, \}$ ,}}} \\
\ &\ \\
\quad {\frac{1}{\meas ({\Gk^{+}_{E}})}} \, {\underset {\rho} \sum} \, \deg ( \rho ) \, \Theta_{\rho} &
{{\text{\rm{$\begin{array} {l}
{\text{\rm{{\hskip 0.30in}when $V^{\Gk^{+}_{E}}_{\kappa} \, \neq \, \{ \, 0 \, \}$.  The sum is over }}}  \\
{\text{\rm{{\hskip 0.560in}$\rho \in \widehat{{\Gk_{E}}/{\Gk^{+}_{E}}}$ appearing in  $V^{\Gk^{+}_{E}}_{\kappa}$}}}
  \end{array}$}}}}
\end{cases}
$$
 
\noindent{The} $\rho$ which appear belong to a block.  We call $e_{E}$ the Peter--Weyl idempotent.  Clearly this defines a $\Gk$-equivariant system of idempotents on $\ScptB$.   The first and third authors, established in earlier unpublished work that \ $e_{E}  = P (\Omega([\Mk, \pi]) ) \star e_{\Gk^{+}_{E}}$.  Once the $\Gk$-equivariant system of idempotents is in hand, it remains to show (see Theorem \eqref{generaldepthzerotheorem}, Corollary \eqref{generaldepthzerocorollary}, and Theorem \eqref{maindepthzero}):

\begin{thm*} \    Suppose $\mG$ is a connected absolutely quasisimple $\mk$-group.  Let $\Gk = \mG (\mk )$, and let $\ScptB = \ScptB (\Gk )$ be the Bruhat--Tits building.   Suppose $F$ is a facet of $\ScptB$, and $\sigma$ is the inflation to $\Gk_{F}$ of an irreducible cuspidal representation of $\Gk_{F}/\Gk^{+}_{F}$.  Take $\tau \in {\mathcal E}(\sigma)$ as above, and define a $\Gk$-equivariant system of idempotents.  \ Then,

\begin{itemize}
\item[$\bullet$] \ The alternating sum
$$
 P \ = \  {\underset {L \subset \ScptB (\Gk) } \sum} (-1)^{\dim (L)} e_{\tau , L} 
$$
\noindent{over} the facets of $\ScptB (\Gk)$ defines a $\Gk$-invariant essentially compact distribution.
\smallskip
\item[$\bullet$] With Levi subgroup $\Mk$ defined as above, the distribution $P$ is the projector to the Bernstein component of $(\Mk , {\text{\rm{c-Ind}}}^{\Gk}_{{\mathcal F}_{F}} (\tau ))$.
\end{itemize}

\end{thm*}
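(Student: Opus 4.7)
The strategy is to reduce the claim to the [{\reBKV}] Euler--Poincar{\'{e}} formula for the depth zero Bernstein projector $P_{0}$, exploiting the identity
\begin{equation*}
e_{\tau, L} \ = \ P(\Omega) \star e_{\Gk^{+}_{L}}\,, \qquad \Omega \, = \, \Omega([\Mk, \pi])\,,
\end{equation*}
recalled just above the theorem. I would proceed in three stages.

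\mydent
\emph{Stage 1 (essential compactness).} Fix $f \in C^{\infty}_{c}(\Gk)$ and let $\{\Sigma_{n}\}_{n \in \bN}$ be an exhausting sequence of finite convex polysimplicial subcomplexes of $\ScptB$ as in [{\reBKV}]. By associativity of convolution and linearity of convolution by the fixed distribution $P(\Omega)$ on $C^{\infty}_{c}(\Gk)$,
$$
\sum_{L \subset \Sigma_{n}} (-1)^{\dim(L)} \, e_{\tau, L} \star f \ = \ P(\Omega) \star \biggl( \sum_{L \subset \Sigma_{n}} (-1)^{\dim(L)} \, e_{\Gk^{+}_{L}} \star f \biggr)\,.
$$
The $r=0$ case of [{\reBKV}] says the bracketed inner sum stabilizes for $n$ sufficiently large to $P_{0} \star f$. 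Hence the partial sums for $P \star f$ stabilize to $P(\Omega) \star P_{0} \star f$, an element of $C^{\infty}_{c}(\Gk)$ because $P(\Omega)$ is essentially compact. The symmetric computation handles $f \star P$, so $P$ is a well defined essentially compact distribution.

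\mydent
\emph{Stage 2 ($\Gk$-invariance).} This is immediate from the $\Gk$-equivariance of the system $\{e_{\tau,L}\}$: conjugation by $g \in \Gk$ carries $e_{\tau, L}$ to $e_{\tau, gL}$ while preserving $\dim(L)$, so it simply permutes the terms of the alternating sum.

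\mydent
\emph{Stage 3 (identification with $P(\Omega)$).} Stage 1 actually shows $P = P(\Omega) \star P_{0}$ inside the Bernstein center $\mathcal{Z}(\Gk)$. In the decomposition $\mathcal{Z}(\Gk) = \prod_{\Omega'} \mathcal{I}_{\Omega'}$, the element $P_{0}$ is the identity on $\mathcal{I}_{\Omega'}$ for every depth zero component $\Omega'$ and vanishes on the rest. Since $\Omega$ has depth zero and $P(\Omega) \in \mathcal{I}_{\Omega}$, one obtains $P(\Omega) \star P_{0} = P(\Omega)$, completing the identification.

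\mydent
The principal obstacle is controlling the convergence of the partial sums of $\sum_{L} (-1)^{\dim(L)} e_{\tau, L}$. The factorization $e_{\tau, L} = P(\Omega) \star e_{\Gk^{+}_{L}}$ reduces this delicate local finiteness question to the already established [{\reBKV}] theorem for trivial idempotents, after which the identification with $P(\Omega)$ is pure algebra in $\mathcal{Z}(\Gk)$.
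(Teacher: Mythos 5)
Your architecture --- factor $e_{\tau ,L} = P(\Omega ) \star e_{\Gk^{+}_{L}}$, pull the fixed central distribution $P(\Omega )$ out of the finite partial sums, and quote [{\reBKV}] for the stabilization of $\sum_{L}(-1)^{\dim (L)}\, e_{\Gk^{+}_{L}} \star f$ --- is internally coherent, and Stages 2 and 3 are correct as pure algebra in ${\mathcal Z}(\Gk )$. The problem is the starting point of Stage 1. The identity $e_{\tau ,L} = P(\Omega ) \star e_{\Gk^{+}_{L}}$ appears in the introduction only as something ``established in earlier unpublished work''; it is not proved in this paper, it is not available in the literature, and the proof actually given here is organized precisely so as not to assume it --- indeed the paper obtains it as an \emph{output}: Corollary \eqref{generaldepthzerocorollary} proves $P \star e_{\Gk^{+}_{F}} = e_{\tau ,F}$ and then $P = P(\Omega )$, from which the identity follows. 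Unwinding what the identity asserts for an irreducible $\pi$ --- that $\pi (e_{\tau ,L})$ is the projection onto $V^{\Gk^{+}_{L}}_{\pi}$ when $\pi$ lies in $\Omega$ and is zero otherwise, i.e.\ that for \emph{every} facet $L$ the Harish--Chandra block ${Bk}(L)$ of $\Gk_{L}/\Gk^{+}_{L}$ cuts out exactly the $\Omega$-part of the $\Gk^{+}_{L}$-invariants --- shows that it carries essentially all of the representation-theoretic content of the theorem. Taking it as given assumes away the hard part, so as written the proposal has a genuine gap.

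For comparison, the paper's proof uses neither this identity nor [{\reBKV}] (which it instead re-derives for integral depth in the appendix by the same machinery). It attacks the alternating sum directly: fixing a base chamber $C_{0}$ and the Bruhat height, Theorem \eqref{generaldepthzerotheorem} shows that the contribution $\sum_{E \in {\mathcal F}_{+}(D)} (-1)^{\dim (E)}\, e_{\tau ,E} \star e_{J}$ of each sufficiently distant chamber $D$ vanishes, by reducing to the finite-field convolution identities for block idempotents against unipotent radicals (Corollaries \eqref{finitefieldcora} and \eqref{finitefieldcorb}) and disposing of the finitely many exceptional chambers near the walls with the geometric finiteness result of Proposition \eqref{keypermissible}; the identification of the projector then comes from $P \star e_{\Gk^{+}_{F}} = e_{\tau ,F}$ together with Propositions \eqref{compactinductiona}--\eqref{compactinductionc}. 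If you wish to keep your reduction to [{\reBKV}], you must first supply a proof of $e_{\tau ,L} = P(\Omega ) \star e_{\Gk^{+}_{L}}$ for all facets $L$ --- which requires exactly the interaction between Harish--Chandra theory for $\Gk_{L}/\Gk^{+}_{L}$ and the Moy--Prasad results that the paper's argument is built around --- at which point your route is no longer shorter than the one given.
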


\smallskip

We note, for the Iwahori component (smooth irreducible representations with nonzero Iwahori-fixed vectors), the (Iwahori) Peter--Weyl idempotent $e_{F}$ of a facet $F$ is the sum of the character idempotents of those irreducible representations of the finite field group $\Gk_{F}/\Gk^{+}_{F}$ which have a nonzero Iwahori fixed vector, i.e., a Borel fixed vector.

\medskip 

We sketch the argument to show the Euler-Poincar{\'{e}} infinite sum defines an essentially compact distribution.  We fix a chamber $C_{0}$, and define the convex ball $\myBall (C_{0},m)$ to be the simplicial subcomplex which is the union of all chambers whose Bruhat length from $C_{0}$ is at most $m$.  The union of these balls obviously exhaust $\ScptB$, and $\myBall (C_{0},(m+1))$ is obtained by adding chambers of Bruhat length $(m+1)$ to $\myBall (C_{0},m)$.  If $D$ has Bruhat length $(m+1)$, let ${\mathcal C}(D)$ be the set of facets of $D$ which are not already in $\myBall (C_{0},m)$.   We note that if $k$ is the number of faces of $D$ in ${\mathcal C}(D)$, then $\# ({\mathcal C}(D)) = 2^{k}$.  If $J$ is any open compact subgroup of $\Gk$, we show the convolution
$$
{\text{\rm{Con}}} = \big( {\underset {E \in {\mathcal C}(D)} \sum } \ (-1)^{\dim (E)} \, e_{E} \ \big) \ \star \ e_{J}
$$
\smallskip
\noindent{vanishes} once $m$ is sufficiently large, say $m \ge N$.  Hence,
$$
{\underset {E \in \myBall (C_{0},n)} \sum } \ (-1)^{\dim (E)} \, e_{E} \ \star \ e_{J} \ = \ {\underset {E \in \myBall (C_{0},N)} \sum } \ (-1)^{\dim (E)} \, e_{E} \ \star \ e_{J} \ , \ \ \ {\text{\rm{for all $n \ge N$.}}}  
$$
\noindent{We} deduce that the infinite Euler-Poincar{\'{e}} sum defines a $\Gk$-invariant essentially compact distribution $P$.  We then establish $P$ is the projector $P(\Omega )$ to the component $\Omega$.  

\medskip

We briefly explain here why the convolution ${\text{\rm{Con}}}$ vanishes when $m$ is sufficiently large.  In the set ${\mathcal C}(D)$, there is a minimal facet $D_{+}$ contained in all the other facets.  This means $\Gk_{D_{+}} \supset \Gk_{E}  \supset \Gk^{+}_{E} \supset \Gk^{+}_{D_{+}}$ holds for all $E \in {\mathcal C}(D)$, and consequently $\Gk_{E}/\Gk^{+}_{D_{+}}$ is a parabolic subgroup in  $\Gk_{D_{+}}/\Gk^{+}_{D_{+}}$.  A convolution vanishing result is established in the finite field group  $\Gk_{D_{+}}/\Gk^{+}_{D_{+}}$, which when the Bruhat length is sufficiently large implies the vanishing of ${\text{\rm{Con}}}$.   

\medskip

We briefly outline the presentation of results.  In section \eqref{apartment}, we introduce notation, and prove preliminary results on facets in the Bruhat--Tits building.   A key result (Proposition \eqref{keypermissible}) on facets is proved in the last subsection.   Section \eqref{harishchandra} is a review of basic results on representations of connected reductive groups over a finite field that follow from Harish-Chandra's philosophy of cusp forms [{\reHCb}].  In sections  \eqref{iwahoribernstein} and \eqref{generaldepthzero} we prove the main result when $\mG$ is split.  In section \eqref{nonsplit}, we indicate the modifications that adapt the proofs of sections \eqref{iwahoribernstein} and \eqref{generaldepthzero} to the nonsplit setting.

\medskip

In the appendix, we show our approach is adaptable to also yield the Euler-Poincar{\'{e}} formula of [{\reBKV}] for the projector $P_{r}$, when $r > 0$ is integral.  

\medskip

The evidence provided by the Euler--Poincar{\'{e}} formula for the depth r projector $P_r$ and individual depth zero projectors, leads to the expectation there should be an Euler--Poincar{\'{e}} formula for suitable combinations of Bernstein projectors.  In the extreme case of a single Bernstein component and positive depth, the equivariant data should involve refinements of the unrefined minimal $\SaK$-types of [{\reMPa}, {\reMPb}].


\vskip 0.70in 
 

\section{Notation, review and results on facets in the Bruhat--Tits building}\label{apartment}

\medskip

\subsection{Notation} \quad Suppose $\mk$ is a non-archimedean local field. 
Denote by ${\mathfrak O}_{\mk}$, $\wp_{\mk}$, and $\bF_{q} = {\mathfrak O}_{\mk}/\wp_{\mk}$ respectively, the ring of integers, prime ideal, and residue field of $\mk$.  Let $\mG$ be a connected reductive linear algebraic group defined over $\mk$.   If $\mH$ is a $\mk$-subgroup of $\mG$, we write $\Hk$ for the group of $\mk$-rational points of $\mH$, e.g., $\Gk = \mG (\mk )$.  For convenience, we assume $\mG$ is $\mk$-split and absolutely quasisimple.  Set $\ell = \myrank (\mG)$.  Let $\ScptB = \ScptB (\Gk )$ be the reduced Bruhat-Tits building of $\Gk$.  Let $\mS$ be a maximal $\mk$-split torus of $\mG$, and let $\Ak = \Ak (\Sk)$ be the apartment associated to $\Sk = \mS(\mk)  \subset \Gk$.  Our hypotheses on $\mG$ (split, quasisimple) mean the apartments are simplicial complexes, and hence $\ScptB$ is too.   The group $\Gk$ acts transitively on the chambers ($\ell$-simplices) of $\ScptB$.    The choice of a hyperspecial point $x_{0} \in \ScptA$ corresponds to the choice of a Chevalley basis for the Lie algebra $\fkg$ of $\Gk$.

\medskip

Let $\Phi = \Phi (\Sk)$ be the roots (of $\Gk)$ with respect to $\Sk$, and $\Psi = \Psi (\Ak)$ the system of affine roots on $\Ak$.  If $\alpha$ (resp.~$\psi$) is a root (resp.~affine root), set $\Uk_{\alpha}$ (resp.~$\Xk_{\psi}$) to be the associated root (resp.~affine root) group.  If $\Phi^{+}$ is any set of positive roots of $\Phi$, let $\Delta$ denote the simple roots subset of $\Phi^{+}$.   

\medskip

\begin{itemize} 
\item[$\bullet$] Fix a Borel subgroup $\mB \supset \mS$ of $\mG$, and let $\Phi_{\Bk}^{+} = \Phi (\Sk , \Bk )$ denote the set of positive roots with respect to $\Bk$, and $\Delta_{\Bk} \, = \, \{ \, \alpha_1 \, , \, \dots \, , \, \alpha_{\ell} \, \}$ the simple roots. 

  \medskip
  
\begin{itemize}
\item[(i)]  Let $\psi_{i}$ \ ($ \, 1 \le i \le \ell$) be the affine roots so that $\mygrad (\psi_{i} ) = \alpha_{i}$, and $\psi (x_{0}) = 0$.

  \smallskip
  
  \item[(ii)] Let $\psi_{0}$ be the affine root so that $\mygrad ( \psi_{0} )$ is the negative of the highest root, and $\psi_{0}(x_{0}) = 1$ (we have normalized the value group [{\reTa}:{\S}0.2] to be $\bZ$). 
\end{itemize}

\item[$\bullet$]  Set 
$$
\aligned
{\mathcal S} \ :&= \ \{ \, x \in \ScptA \ | \ \psi_{i} (x ) > 0 \ , \quad \forall \ \ 1 \le i \le \ell \, \}  \\
&\qquad  {\text{\rm{the positive Weyl chamber in $\ScptA$ with respect to $x_{0}$ and $\Phi^{+}$, }}}  \\
{\mathcal S}_{0} \ :&= \ \{ \, x \in \ScptA \ | \ \psi_{i} (x ) > 0 \ , \quad \forall \ 0 \le i \le \ell \, \} \ . \\
\endaligned
$$

\noindent{Note}  ${\mathcal S}_{0} \subset {\mathcal S}$, and its closure $\overline{{\mathcal S}_{0}}$ is a chamber (affine Weyl Chamber) in $\ScptA$, and with respect to   ${\mathcal S}_{0}$, the sets
$$
\Psi^{+} \ = \ \{ \ \psi \in \Psi ( \ScptA ) \ | \ \psi (x) > 0 \ \ \forall \ x \in {\mathcal S}_{0} \ \} \quad {\text{\rm{, \ and}}}  \quad \Delta_{0} \ = \ \{ \ \psi_{0} , \, \psi_{1} , \, \dots \, , \, \psi_{\ell} \ \} 
$$
\noindent{are} the positive affine roots, and the simple affine roots respectively.  The affine roots $\Psi (\ScptA )$ are integer combinations \ $\psi = \sum \, n_{i} \psi_{i}$ \ satisfying $\mygrad (\psi ) \in \Phi$. 

\smallskip

\item[$\bullet$]  For $x \in \ScptB$, and $r \ge 0$, let $\Gk_{x,r}$ be the Moy-Prasad subgroup associated to $x$ and $r$.   

\smallskip

\item[$\bullet$] We fix a Haar measure on $\Gk$.  If $J$ is an open compact subgroup of $\Gk$, we define $e_{J}$ to be the idempotent:
$$
e_{J}(x) \ := \ {\frac{1}{\meas{(J)}}} 
\begin{cases} 
\ \ 1 &{\text{\rm{if $x \, \in \, J$}}} \\
\ \ 0 &{\text{\rm{otherwise}}} \, . 
\end{cases}
$$

\end{itemize}

\medskip

\subsection{A simplex Lemma} \quad We recall and designate some nomenclature.  An $\ell$-dimensional simplex $D$ is the convex closure of a set ${\text{\rm{Vert}}} = \{ \, v_0, \, v_1, \, \dots \, , \, v_{\ell} \, \}$  of $(\ell + 1)$ points in an affine space so that $v_1 -v_0, \, v_2 -v_0, \, \dots \, , \, v_{\ell} -v_0$ are linearly independent.  For a non-empty subset $K \subset {\text{\rm{Vert}}}$ with $(k+1)$ elements, we designate: 

\begin{itemize}
\item[$\bullet$]   ${\text{\rm{facet}}} (K) \ := \ {\text{\rm{convex closure of $K$}}}$.  It is a {\it $k$-facet} of $D$.
\smallskip  
\item[$\bullet$] The convex set
    $$
    \mypfacet (K) \ := \ {\text{\rm{facet}}} (K) \quad \backslash \quad {\underset {L \subsetneq K} \bigcup}  \ {\text{\rm{facet}}} (L) \ .
    $$
\noindent{The} recess is the  interior of ${\text{\rm{facet}}} (K)$ when $k \ge 1$ and equal to ${\text{\rm{facet}}} (K)$ for $k=0$.  A useful feature of \mypfacets \ is that the simplex $D$ is partitioned by them, and there is a one-to-one correspondence from \mypfacets \ to facets, namely the process of taking the closure.  For convenience, when $E$ is a facet of $D$, we write $\mypfacet (E)$ for the \mypfacet \ whose closure is $E$.
 
\smallskip
\item[$\bullet$]  It is elementary that:
\smallskip
\begin{itemize}
\item[(i)] the number of $k$-facets contained in a $j$-facet \ ($j \ge k$) is $\binom{j+1}{k+1}$,
\smallskip
\item[(ii)]  the total number of facets \ is $2^{\ell +1}-1$.
\end{itemize}

\smallskip

\noindent{A} {\it face} of $D$ is, by definition, the convex closure of $\ell$ points of $V$, i.e., a maximal proper facet of $D$. 

\medskip

\item[$\bullet$] If $F$ is a facet in $\ScptB (\Gk)$, and $y \in \mypfacet (F)$, let $\Gk_{F}$ denote the parahoric subgroup  $\Gk_{y,0}$, and let $\Gk^{+}_{F} = \Gk_{y,0^{+}}$.   

\end{itemize}

\medskip

\begin{lemma}\label{simplexlemma}  Suppose $D$ is an $\ell$-dimensional simplex, and ${\mathcal F}$ is a non-empty collection of faces of $D$.  Set $m = \# ({\mathcal F})$.  Then  

\begin{itemize}

\item[(i)] The union \ ${\mathcal P} \ = \ {\underset {F \, \in \, {\mathcal F}} \bigcup } F$ \ is a simplicial complex inside $D$.  The number of $k$-facets of $D$ in ${\mathcal P}$ is 
{\small{
$$
\sum^{m}_{r=1} \ \ (-1)^{(r-1)} \, \binom{\ell + 1 - r}{k+1}  \, \binom{m}{r} \ .
$$
}}
\medskip

\item[(ii)] Let ${\mathcal C}$ be the facets of $D$ occurring in the complement of ${\mathcal P}$.  Then:

\smallskip

\begin{itemize}
  
\item[(ii.1)] A facet in ${\mathcal C}$ has codimension at most $( \ell + 1 - m )$.  

\smallskip

\item[(ii.2)] The number of facets of codimension $j$ is $\binom{\ell + 1 - m}{j}$. 

\smallskip

\item[(ii.3)] The total number of facets in ${\mathcal C}$ is $2^{\ell + 1 - m}$.  

\end{itemize}

\end{itemize}

\end{lemma}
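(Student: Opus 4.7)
The plan is to reduce the lemma to elementary combinatorics on the vertex set $V = \{v_0, v_1, \ldots, v_\ell\}$ of $D$. Recall the facets of $D$ are in bijection with non-empty subsets $K \subseteq V$ via $K \mapsto \myfacet(K)$, and the faces are exactly the $\myfacet(V \setminus \{v_j\})$. I would label $\mathcal{F} = \{F_1, \ldots, F_m\}$ with $F_i = \myfacet(V \setminus \{v_{j_i}\})$ for distinct indices $j_1, \ldots, j_m \in \{0, 1, \ldots, \ell\}$. The key observation is that $\myfacet(K) \subseteq F_i$ if and only if $v_{j_i} \notin K$: a point of $\mypfacet(K)$ is a strictly positive convex combination of the vertices in $K$ and can belong to the face $F_i$ only when the coefficient of the omitted vertex $v_{j_i}$ vanishes. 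In particular $\myfacet(K) \subseteq \mathcal{P}$ iff $K$ avoids $v_{j_i}$ for some $i$, which in passing shows $\mathcal{P}$ is a subcomplex.

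For part (i), I would apply inclusion-exclusion to the property ``$K$ is contained in some $V \setminus \{v_{j_i}\}$.'' The number of $(k+1)$-subsets $K \subseteq V$ with this property equals
$$\sum_{\emptyset \neq I \subseteq \{1, \ldots, m\}} (-1)^{|I|-1}\binom{\ell + 1 - |I|}{k+1},$$
since $\bigcap_{i \in I}(V \setminus \{v_{j_i}\})$ has exactly $\ell + 1 - |I|$ vertices and so contains $\binom{\ell + 1 - |I|}{k+1}$ subsets of size $k+1$. Grouping the index sets $I$ by their cardinality $r$ gives the stated formula.

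For part (ii), the same criterion shows that $\myfacet(K) \in \mathcal{C}$ exactly when $K$ is not contained in any $V \setminus \{v_{j_i}\}$, which is equivalent to $\{v_{j_1}, \ldots, v_{j_m}\} \subseteq K$. Such $K$ are obtained by adjoining to $\{v_{j_1}, \ldots, v_{j_m}\}$ an arbitrary subset of the remaining $\ell + 1 - m$ vertices. In particular $|K| \geq m$, so $\myfacet(K)$ has dimension at least $m-1$ and codimension at most $\ell + 1 - m$, giving (ii.1). For (ii.2), codimension $j$ corresponds to $|K| = \ell + 1 - j$, that is to a subset of $V \setminus \{v_{j_1}, \ldots, v_{j_m}\}$ of size $\ell + 1 - j - m$; there are $\binom{\ell + 1 - m}{\ell + 1 - j - m} = \binom{\ell + 1 - m}{j}$ such choices. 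Summing over $j = 0, 1, \ldots, \ell + 1 - m$ gives (ii.3): the total is $2^{\ell + 1 - m}$.

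The argument is pure bookkeeping once the facet/vertex-subset dictionary and the elementary characterization of containment are in place; there is no substantive obstacle beyond the tidy management of indices and the standard binomial identity $\binom{n}{k} = \binom{n}{n-k}$.
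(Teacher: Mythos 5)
Your proof is correct. For part (i) it is essentially the paper's argument: the paper also proceeds by inclusion--exclusion, observing that the intersection of $r$ distinct face closures is the closure of an $(\ell-r)$-facet containing $\binom{\ell+1-r}{k+1}$ $k$-facets; your vertex-subset dictionary ($k$-facets $\leftrightarrow$ $(k+1)$-subsets of $V$, faces $\leftrightarrow$ $V\setminus\{v_{j}\}$) is just a cleaner bookkeeping device for the same count. For part (ii), however, you take a genuinely different and more elementary route. The paper extends the inclusion--exclusion sum to $r=0$ and identifies the resulting expression as a coefficient in the power series $(1+x)^{m}/(1+x)^{k+2}$, splitting into the cases $m\ge k+2$ and $m<k+2$ to extract $\binom{\ell+1-m}{\ell-k}$. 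You instead characterize $\mathcal{C}$ directly: $\myfacet(K)\in\mathcal{C}$ exactly when $K\supseteq\{v_{j_1},\dots,v_{j_m}\}$, so the facets of $\mathcal{C}$ are parametrized by arbitrary subsets of the remaining $\ell+1-m$ vertices, and (ii.1)--(ii.3) follow by inspection. This buys two things: it avoids the generating-function manipulation entirely, and it produces the explicit description of $\mathcal{C}$ (as the facets $F(Y)$ indexed by subsets $Y$ of the complementary faces) that the paper only records in the paragraph \emph{after} the lemma. The one point worth making explicit in a write-up is the "only if" direction of your containment criterion for the union: if $K$ contains every $v_{j_i}$, then the barycenter of $K$, lying in $\mypfacet(K)$, has strictly positive coefficient on each $v_{j_i}$ and hence lies in no $F_i$, so $\myfacet(K)\not\subseteq\mathcal{P}$; you gesture at this and it is easily completed.
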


\medskip

\noindent{Note.} \ (i) \, when ${\mathcal F}$  is all the $\ell + 1$ faces, then  ${\mathcal C}$ consists of $D$,  \ (ii) \, when ${\mathcal F}$  is all but one of the faces, then ${\mathcal C}$ consists two elements  (the remaining face, and $D$ itself), \ (iii) \, when ${\mathcal F} \, = \, \{ \, F \, \}$ is a single face $F$, then ${\mathcal C}$ consists of all the facets not contained in $F$.

\medskip

\begin{proof}   The proof of (i) is based on inclusion and exclusion.  Suppose ${\Sigma}$ is the closure of a $j$-facet in the union ${\mathcal P}$.  For $k \le j$, the number of $k$-facets in ${\Sigma}$ is $\binom{j+1}{k+1}$.    The intersection of $r$ distinct face closures is the closure of a unique $(\ell - r)$-facet, e.g., a single face is a $(\ell - 1)$-facet.  This $(\ell - r)$-facet has  $\binom{\ell - r +1}{k+1}$ $k$-facets in its closure.  By the principle of inclusion and exclusion, the number of $k$-facets in the union ${\mathcal P}$ is the stated

{\small{
$$
\sum^{m}_{r=1} \ \ (-1)^{(r-1)} \binom{\ell + 1 - r}{k+1}  \, \binom{m}{r} \ .
$$
}}

\noindent{To} prove statement (ii), we consider the sum obtained by extending the index $r$ to $r=0$, i.e., the sum
{\small{
$$
\sum^{m}_{r=0} \ \ (-1)^{(r-1)} \binom{\ell + 1 - r}{k+1}  \, \binom{m}{r} \ .
$$ 
}}
\bigskip

\noindent{This} is $(-1)^{(\ell-k+1)}$ times the coefficient of $x^{(\ell - k)}$ in the power series expansion of $\frac{1}{(1+x)^{(k+2)}}(1+x)^m$.

\medskip

When $m \ge (k+2)$,  the  power series is the polynomial $(1+x)^{((m-2)-k)}$, and the coefficient of $x^{(\ell - k )}$ is zero; so,

{\small{
$$
\binom{\ell +1}{k+1} \ = \ \sum^{m}_{r=1} \ \ (-1)^{(r-1)} \binom{\ell + 1 - r}{k+1}  \, \binom{m}{r} \ ;
$$  
}}

\noindent{In particular}, all $k$-facets of $D$ are in the union ${\mathcal P}$; so none are in ${\mathcal C}$. 

\bigskip

When $m < (k+2)$, the coefficient of $x^{(\ell - k)}$ in the power series expansion of $\frac{1}{(1+x)^{((k+2)-m)}}$
is $(-1)^{(\ell - k )} \, \binom{\ell +1 -m}{k+1-m}$.  Thus, 
{\small{
$$
\sum^{m}_{r=1} \ \ (-1)^{(r-1)} \binom{\ell + 1 - r}{k+1}  \, \binom{m}{r} \ = \ \binom{\ell +1 }{k+1} \ - \ 
\binom{\ell +1 -m}{k+1-m} 
$$
}}
\noindent{Thus,} the number of $k$-facets (of $D$) in ${\mathcal C}$ is  $\binom{\ell +1 -m}{k+1-m} = \binom{\ell +1 -m}{\ell - k }$.  The integer $j = {\ell - k }$ is the codimension. 

\end{proof}

\medskip

Suppose $D$ is a chamber of $\ScptB$, and ${\mathcal F}$ is a non-empty collection of $m$ faces of $D$.   Let  $W \, = \, \{ \, E_{1}, \, E_{2}, \, \dots \, , \, E_{(\ell + 1 - m)} \}$ be the faces of $D$ complementary to the faces in ${\mathcal F}$.  Then, the facets in the complement ${\mathcal C}$ of codimension $j$ can be described as follows: \ Given a subset $Y \subset W$ of $j$ faces, let 
$$
F(Y) \ = \ \big( {\underset {E \in Y} \bigcap} E \, \big) \quad {\text{\rm{a $(\ell - j)$-facet}}} \, . 
$$  
\noindent{When} $Y = \emptyset$, we use the convention $F( \emptyset ) = D$; so, $\Gk_{F(\emptyset )} = \Gk_{D}$.      Clearly, all the $2^{(\ell + 1 - m)}$ facets in ${\mathcal C}$ are obtained in this fashion.  If $( \,\emptyset \, \subset ) \ Y_1 \subset Y_2 \subset W$, then $D \supset F(Y_{1}) \supset F(Y_{2}) \supset F(W)$; so, $\Gk_{D} \subset \Gk_{F(Y_{1})} \subset \Gk_{F(Y_{2})} \subset \Gk_{F(W)}$, and 
$\Gk^{+}_{D} \supset \Gk^{+}_{F(Y_{1})} \supset \Gk^{+}_{F(Y_{2})} \supset \Gk^{+}_{F(W)}$. The quotient $\BFq = \Gk_{D}/\Gk^{+}_{F(W)}$ is a Borel subgroup of the finite field group $\GFq = \Gk_{F(W)}/\Gk^{+}_{F(W)}$.  The parahoric subgroups which fix the  $2^{(\ell + 1 - m)}$ facets of \eqref{simplexlemma} part (ii.3) corresponds to the standard parabolic subgroups of $\GFq$ which contain the Borel subgroup $\BFq$.

\vskip 0.3in



\subsection{Bruhat height} \quad  We fix an apartment $\ScptA = \ScptA (\Sk )$ of the building, and a chamber $C_{0}$ in $\ScptA$.  Let $\Sk_{\text{\rm{c}}}$ denote the maximal bounded (compact) subgroup of $\Sk$.  We recall that the normalizer $\Nk = N_{\Gk}( \Sk )$ of $\Sk$ acts on $\ScptA$, with action kernel equal to $\Sk_{\text{\rm{c}}}$, the maximal bounded (compact) subgroup of $\Sk$, i.e., the action factors through the extended affine Weyl group
$$
W^{\text{\rm{a}}} := N_{\Gk}(\Sk )/{\Sk_{\text{\rm{c}}}} \ .
$$
\noindent{For} $n \in W^{\text{\rm{a}}}$, 
let $\ell_{\myBruhat}(n)$ denote the Bruhat length of $n$.  If $D = n.C_{0}$, we define the {\it Bruhat height of $D$ with respect to $C_{0}$} as:
$$
\myht_{C_{0}}(D) \ := \ \ell_{\myBruhat}(n) \ .
$$

\medskip
If $\psi$ is a affine root, we set the associated affine hyperplane as{\,}:
\begin{equation}\label{defnaffinehyperplane}
H_{\psi}\ := \ {\text{\rm{the zero locus (an affine hyperplane) of $\psi$}}} \, .
\end{equation}
\noindent{We} also use the notation $H_{\pm \psi }$ for this affine hyperplane.

\medskip

For any facet $F$ (not necessarily a face) of $D$, we set

\vskip -0.10in

\begin{equation}\label{facetzeroaffineroots}
{\Psi} (F) \ := \ {\text{\rm{set of affine roots $\psi$ which vanishes on $F$}}} \ .
\end{equation}

\smallskip

\noindent{A} face $F$  (facet of dimension $(\ell - 1)$) of $D$ is in the zero hyperplane set of a unique pair of affine roots $\pm \psi$, i.e., ${\Psi} (F) = \{ \pm \psi \}$.    When $F$ is a face of $D$, set

\vskip -0.00in

\begin{equation}
  \aligned
  r_{F} \, ( \, \in \, W^{\text{\rm{a}}} \, ) \ &= \ {\text{\rm{the affine reflection across the hyperplane $H_{\pm \psi}$}}} \, ,  \\
  \myopp_{F} (D) \ &= \ {\text{\rm{the chamber obtained from $D$ by reflection across $F$.}}}
  \endaligned
\end{equation}

\vskip 0.10in

\noindent{Any}  chamber $D$ of $\ScptA$ can be obtained from $C_{0}$ by a composition of reflections $r_{F} \, ( \, \in W^{\text{\rm{a}}} \, ) $.  \ We find it useful to define for a pair of roots $\{ \pm \gamma \} \subset \Phi$, the $\{ \pm \gamma \}$-height of a chamber $D$ with respect to $C_{0}$ as:

\begin{equation}\label{bruhatheightpieces}
\myht^{\pm \gamma}_{C_{0}}(D) \ := \ \begin{cases}
\begin{tabular}{p{3.5in}}
The number of affine hyperplanes $H_{\psi}$ satisfying: \\ \quad (i) \ $\mygrad (\psi) = \pm \gamma$, and \\ \quad (ii) $H_{\psi}$ separates $C_{0}$ and $D$.  \\
\end{tabular}
\end{cases}
\end{equation}

\noindent{Then}, 

\begin{lemma}\label{bruhatheight} Fix a base chamber $C_{0}$ in $\ScptA$.  If $D$ is a chamber in $\ScptA$, then the minimum number of affine reflections needed to take $C_{0}$ to $D$ is the sum 

$$
\myht_{C_{0}} \, (D) \ = \ {\underset { \{ \pm \gamma \} } {\sum}} \ \myht^{\pm \gamma}_{C_{0}}(D) 
$$

\noindent{over} all pairs of roots in $\Phi$.  
\end{lemma}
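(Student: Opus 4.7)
The plan is to reduce the lemma to the standard Coxeter-theoretic identity ``length equals the number of separating walls,'' and then trivially partition by gradient.

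First I would establish that $\myht_{C_0}(D)$ equals the total number of affine hyperplanes $H_\psi$ separating $C_0$ from $D$. This is the classical fact that, in a Coxeter complex, the length of a group element (acting on chambers) coincides with the gallery distance from $C_0$ to its image, which in turn equals the number of reflecting hyperplanes separating the two chambers (see e.g.~Bourbaki, \emph{Groupes et alg\`ebres de Lie}, Ch.~V, \S3). A brief self-contained argument proceeds by induction on $\ell_{\myBruhat}(n)$ for $D = n \cdot C_0$: choose a face $F$ of $D$ lying on a reduced gallery from $C_0$ to $D$; then the reflection $r_F$ carries $D$ to $\myopp_F(D)$ and removes precisely the one separating hyperplane $H_{\pm\psi}$ with ${\Psi}(F) = \{\pm\psi\}$, while every other affine hyperplane separates $C_0$ from $D$ if and only if it separates $C_0$ from $\myopp_F(D)$ (since $D$ and $\myopp_F(D)$ lie on the same side of any hyperplane other than $H_{\pm\psi}$). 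A minor technicality is that $W^{\text{a}}$ is extended, $W^{\text{a}} \cong \Omega \ltimes W^{\text{aff}}$, but elements of the length-zero subgroup $\Omega$ stabilize $C_0$ setwise and so contribute to neither side, reducing matters to the ordinary affine Weyl group.

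Second, I would partition the separating affine hyperplanes according to their gradient. Each affine hyperplane satisfies $H_\psi = H_{-\psi}$ and is determined by an unordered pair $\{\pm\psi\}$ of affine roots; the map $\mygrad$ sends this pair to a unique unordered pair $\{\pm\gamma\} \subset \Phi$. By the definition \eqref{bruhatheightpieces}, the number of separating hyperplanes whose gradient pair is $\{\pm\gamma\}$ is exactly $\myht^{\pm\gamma}_{C_0}(D)$. Summing over all unordered pairs $\{\pm\gamma\}$ in $\Phi$ partitions the set of separating hyperplanes and yields the stated identity.

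The main obstacle is the first step: the paper defines $\myht_{C_0}$ via Bruhat length, whereas the right-hand side of the lemma is phrased in terms of separating hyperplanes, so the equivalence must either be cited or proved via the short induction above. Once that is absorbed, the second step is purely bookkeeping.
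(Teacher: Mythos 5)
Your proposal is correct, but it reaches the identity by a different route than the paper. The paper's proof invokes the explicit Iwahori--Matsumoto length formula (Prop.\ 1.23 of [IM]) for an element $n = x\,n_{w}$ of the extended affine Weyl group, namely $\ell_{\myBruhat}(x n_{w}) = \sum_{w^{-1}(\alpha)\in\Phi^{+}} |\langle x,\alpha\rangle| + \sum_{w^{-1}(\alpha)\in\Phi^{-}} |\langle x,\alpha\rangle - 1|$, and then identifies each summand ${\text{\rm{Ht}}}^{\alpha}(xn_{w})$ directly with the count $\myht^{\pm\alpha}_{C_{0}}(D)$ of separating hyperplanes with gradient $\pm\alpha$; the partition of the length into root directions is thus built into the cited formula. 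You instead prove (or cite) the general Coxeter-theoretic fact that Bruhat length equals the number of reflecting hyperplanes separating $C_{0}$ from $D$, and only afterwards partition those hyperplanes by their gradient pair --- a purely set-theoretic bookkeeping step, since each affine hyperplane $H_{\psi}=H_{-\psi}$ determines a unique pair $\{\pm\gamma\}\subset\Phi$ and $\Phi$ is reduced here. Your route has the advantage of making explicit the step the paper leaves as an assertion (``the geometric meaning \dots is precisely the function $\myht^{\pm\alpha}_{C_{0}}$''), and your remark that the length-zero subgroup of the extended affine Weyl group stabilizes $C_{0}$ and contributes to neither side is a worthwhile point the paper handles implicitly through the choice of representatives $n = x n_{w}$. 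The paper's route buys a closed-form expression for each $\myht^{\pm\gamma}_{C_{0}}(D)$ in terms of $\langle x,\gamma\rangle$ and $w$, which is not needed elsewhere but is slightly more informative. One caution: both arguments read ``minimum number of affine reflections'' as gallery distance (successive reflections across faces), not as absolute reflection length in arbitrary hyperplanes --- the latter reading would make the statement false --- so it is worth keeping that interpretation explicit, as your gallery-based induction does.
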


\begin{proof} \quad   In $W^{\text{\rm{a}}}$,  let $X = {\Sk}/{\Sk_{\text{\rm{c}}}}$, and $W = N_{\Gk}(\Sk )/{\Sk}$, the finite Weyl group of $\Gk$.  If we take a hyperspecial point
  $x_{0}$ in $\ScptA$, then every element $w \in W$, has a representative $n_{w} \in (\Gk_{x_{0},0} \cap N_{\Gk}(\Sk ))$, which is unique modulo $\Sk_{\text{\rm{c}}}$.  Let $W_{x_{0}} \, = \, \{ \, n_w \ | \ w \in W \, \}$ be a set of such representatives of $W$.   Then, any $n \in N_{\Gk}(\Sk )$ can be written as $n = x n_{w}$ with $x \in {\Sk}$ and $n_{w} \in W_{x_{0}}$. 

  \medskip

  Suppose $D$ is a chamber of $\ScptA$.  Take $n = x n_{w} \, \in \, N_{\Gk}(\Sk )$ so that $D = (x n_{w}) \, C_{0}$.  Fix a positive system of roots $\Phi^{+} \subset \Phi$.  Denote the negative roots as $\Phi^{-}$.  By Proposition 1.23 of [{\reIM}], the Bruhat length $\myBruhat (n)$ of $n$ is
  \begin{equation}
    \aligned
    \myBruhat (n) \ = \ \myBruhat (\, x \, n_{w} \, ) \ &= \ {\underset { {\text{\rm{\tiny{$\begin{array} {c}
    {\alpha \in \Phi^{+}} \\
    {w^{-1}(\alpha ) \in {\Phi^{+}} } 
  \end{array}$}}}}} 
      \sum } | \, \langle x , \alpha \rangle \, | \ \ + \ \ 
    {\underset { {\text{\rm{\tiny{$\begin{array} {c}
    {\alpha \in \Phi^{+}} \\
    {w^{-1}(\alpha ) \in {\Phi^{-}} } 
  \end{array}$}}}}}
      \sum } | \, \langle x , \alpha \rangle \ - \ 1 \, | 
    \endaligned
  \end{equation}

  \noindent{For} $\alpha \in \Phi^{+}$, the geometric meaning of the function
  \smallskip
  $$
  \aligned
  {\text{\rm{Ht}}}^{\alpha} ( \, xn_{w} \, ) \ :=  \begin{cases}
    \ \ | \, \langle x , \alpha \rangle \, | &{\hskip 0.40in} {\text{\rm{when  ${w^{-1}(\alpha ) \in {\Phi^{+}} }$ }}} \\
    \ \\
    \ \ | \, \langle x , \alpha \rangle \ - \ 1 \, | &{\hskip 0.40in} {\text{\rm{when  ${w^{-1}(\alpha ) \in {\Phi^{-}} }$ }}} \\
    \end{cases}
\endaligned
$$

\medskip

\noindent{is} precisely the function $\myht^{\pm \alpha}_{C_{0}}$.  The Lemma follows.

\end{proof}

We note that if $\ScptA'$ is another apartment of $\ScptB$ containing $C_{0}$,  then the two height functions agree on chambers in the intersection  $\ScptA \cap \ScptA'$.  Thus, there is a unique extension of the height function $\myht_{C_{0}}$ to all the chambers of $\ScptB$.  

\medskip

We observe that if $F$ is a face of a chamber $D \subset \ScptA$, and ${\Psi} (F) = \{ \pm \psi \}$, then for $z \in \mypfacet (F)$ and small positive $\epsilon$, the point $z + \epsilon \, \mygrad (\psi )$ is either in $D$ or $\myopp_{F} (D)$.  With respect to $D$, we define:

\begin{equation}\label{outward-inward}  
\aligned
{\text{\rm{(i)}}} \ \ &{\text{\rm{$\psi$ is {\it outwards oriented} \ if $z \, + \, \epsilon \, \mygrad (\psi )$ is in $\myopp_{F} (D)$ for small positive $\epsilon$}}} \\ 
{\text{\rm{(ii)}}} \ \ &{\text{\rm{$\psi$ is {\it inwards oriented} \ if $z \, + \, \epsilon \, \mygrad (\psi )$ is in $D$ for small positive $\epsilon$}}} \, .
\endaligned
\end{equation}

\medskip

Suppose $D$ is a chamber in the apartment $\ScptA$.  Set
\begin{equation}\label{chamberinout}
\aligned
c(D) \ &= \ \{ {\text{\rm{ $F$ a face of $D$ \ | \ $\myht_{C_{0}}(   \myopp_{F}(D) ) \, = \, \myht_{C_{0}} ( D) \, + \, 1$ \ \} }}} \\
p(D) \ &= \ \{ {\text{\rm{ $F$ a face of $D$ \ | \ $\myht_{C_{0}}(   \myopp_{F}(D) ) \, = \, \myht_{C_{0}} ( D) \, - \, 1$ \ \} . }}} \\
\endaligned 
\end{equation}

\noindent{Clearly} any face of $D$ belongs to either $c(D)$ or $p(D)$.  Mnemonically, the set $p(D)$ (resp.~$c(D)$)  is the set of `parent' or `inward' (resp.~`child' or `outward')  faces of the chamber $D$.

\medskip

\begin{prop}\label{childprop}  Suppose $D$ is a chamber of an apartment $\ScptA$, and $F$ is a face of $D$, and $\myht_{C_{0}} ( \myopp_{F}( D ) ) \, = \, \myht_{C_{0}} ( D ) \, + \, 1$.    Write ${\Psi} (F) \, = \, \{ \pm \psi \}$ (notation \eqref{facetzeroaffineroots}), and choose $\psi$ to be outward oriented for $D$ (notation \eqref{outward-inward}).  Set $\alpha = \mygrad (\psi )$.    Then for any $y \in \mypfacet (D)$ and $x \in \mypfacet (F)$, 

$$
\Gk_{y,0^{+}} \cap \Uk_{\alpha} \ = \ \Gk_{x,0^{+}} \cap \Uk_{\alpha} \ .
$$   

\end{prop}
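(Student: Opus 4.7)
The plan is to reduce both intersections to a single affine root subgroup via the Moy--Prasad description. Recall that for any $z \in \ScptA$, one has $\Gk_{z,0^{+}} \cap \Uk_{\alpha} = \Xk_{\psi'}$, where $\psi'$ is the affine root of smallest strictly positive value at $z$ among those with $\mygrad(\psi') = \alpha$. Since $\mG$ is split absolutely quasisimple with value group $\bZ$, every such affine root has the form $\psi + n$ for some $n \in \bZ$, so the task reduces to showing that the minimal $n \in \bZ$ with $(\psi + n)(z) > 0$ is the same for $z = y \in \mypfacet(D)$ and for $z = x \in \mypfacet(F)$.

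The first step is to use the outward-orientation hypothesis to pin down the sign of $\psi$ on $D$. By definition of outward orientation, for $z \in \mypfacet(F)$ and small $\epsilon > 0$, the point $z + \epsilon \, \mygrad(\psi) = z + \epsilon \alpha$ lies in $\myopp_{F}(D)$ rather than $D$; together with $\psi|_{F} \equiv 0$ this forces $\psi(y) < 0$ for every $y \in \mypfacet(D)$. The second step is to use the alcove structure: since $D$ is a connected component of the complement of all affine hyperplanes, no hyperplane cuts its interior, so the two parallel hyperplanes of gradient $\pm\alpha$ bracketing $D$ are $H_{\psi}$ (through $F$) and $H_{\psi + 1}$ (on the opposite parallel side). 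This confines $\psi(y)$ to the open interval $(-1, 0)$ for all $y \in \mypfacet(D)$.

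From here the conclusion is immediate. For $y \in \mypfacet(D)$, the minimum $n$ with $(\psi + n)(y) > 0$ is $n = 1$, because $\psi(y) \in (-1, 0)$; and for $x \in \mypfacet(F)$, since $\psi(x) = 0$, the strictly positive values $(\psi + n)(x) = n$ start at $n = 1$. Hence both intersections equal $\Xk_{\psi + 1}$, yielding the desired equality. The only genuinely nontrivial input is the alcove geometry used in the second step, and that is standard Bruhat--Tits. The height-condition on $F$ does not enter the computation directly; it serves only to ensure a coherent outward choice of $\psi$ in the broader arguments that invoke this proposition.
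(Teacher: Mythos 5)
Your proof is correct and matches the paper's approach: the paper's entire proof is the one-line remark that both subgroups equal the affine root subgroup $\Xk_{\psi+1}$, and your argument (pinning $\psi$ to the interval $(-1,0)$ on $\mypfacet(D)$ via the outward orientation and the fact that no affine hyperplane cuts an open chamber, then reading off the minimal positive affine root at each point) is exactly the verification the authors leave to the reader.
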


\begin{proof} \quad One verifies both subgroups equal the affine root subgroup $\Xk_{\psi+1}$.
\end{proof}

\smallskip

\begin{figure}[ht]
\includegraphics[width=10cm]{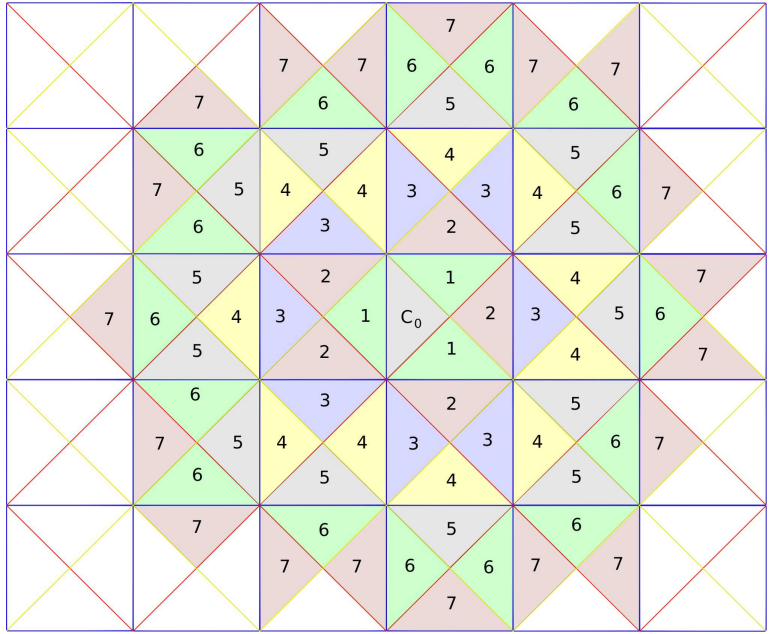}
\caption{Illustration of the height of chambers in an apartment when $\Gk$ is of type ${\text{\rm{C}}}_2$.}
    \label{fig:figure1}
\end{figure}

\vskip 0.3in

\subsection{Chamber based sectors in an apartment} \quad  We fix a maximal torus $\Sk$ and its associated apartment $\ScptA = \ScptA (\Sk )$.  If  $\psi$ is an affine root, set 
\smallskip
\begin{equation}
\aligned
H_{\psi > 0} \ :&= \ \{ \ x \in \ScptA \ | \ \psi (x) > 0 \ \} \quad {\text{\rm{and}}} \quad H_{\psi \ge 0} \ :&= \ \{ \ x \in \ScptA \ | \ \psi (x) \ge 0 \ \} \ .
\endaligned
\end{equation}
\medskip
\noindent{Suppose} $\Phi^{+}$ is a choice of positive roots in $\Phi = \Phi (\Sk )$, and $C_{0}$ is a chamber.  Set:

\begin{equation}\label{csector}
\aligned
S(C_{0},\Phi^{+}) \ :&= {\underset
{{\text{\rm{$\begin{array} {c}
  {\mygrad (\psi ) \in \Phi^{+}} \\ 
  {C_{0} \subset H_{\psi \ge 0}}
  \end{array}$}}}}  \bigcap }  H_{\psi \ge 0}  \, .
\endaligned
\end{equation}

\noindent{We} call such a set the $C_{0}$-chamber based sector with respect to the positive roots $\Phi^{+}$.  

\bigskip

\begin{prop} \quad The chamber based sector $S(C_{0}, \Phi^{+})$ is the set of chambers which can be obtained from $C_{0}$ by repeated application of affine reflections $s_{\psi}$ with $\mygrad (\psi) \, \in \Phi^{+}$.  

\end{prop}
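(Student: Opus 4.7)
The plan is to prove both inclusions in the claimed equality, interpreting a ``repeated application of affine reflections $s_{\psi}$ with $\mygrad(\psi) \in \Phi^{+}$'' as building a gallery $C_{0} = D_{0}, D_{1}, \ldots, D_{n} = D$ in which each step $D_{i-1} \to D_{i} = s_{\psi_{i}}(D_{i-1})$ is a wall crossing whose defining affine root, oriented so that $\mygrad(\psi_{i}) \in \Phi^{+}$, satisfies $\psi_{i}(D_{i-1}) \le 0$ and $\psi_{i}(D_{i}) \ge 0$ (that is, the reflection pushes the current chamber across the wall in the positive root direction). I call such a step a \emph{positive reflection}.

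For the forward direction, suppose $D \in S(C_{0}, \Phi^{+})$. By Lemma \ref{bruhatheight}, there is a minimal gallery $C_{0} = D_{0}, D_{1}, \ldots, D_{n} = D$ whose successive wall crossings $H_{\psi_{1}}, \ldots, H_{\psi_{n}}$ are precisely the affine hyperplanes separating $C_{0}$ from $D$. Orient each $\psi_{i}$ so that $\mygrad(\psi_{i}) \in \Phi^{+}$. The inequality $\psi_{i}(C_{0}) \ge 0$ cannot hold: otherwise $C_{0} \subset H_{\psi_{i} \ge 0}$ would be one of the half-spaces defining the sector in \eqref{csector}, forcing $\psi_{i}(D) \ge 0$ by membership of $D$ in $S(C_{0}, \Phi^{+})$, contradicting the fact that $H_{\psi_{i}}$ separates $C_{0}$ from $D$. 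Hence $\psi_{i}(C_{0}) < 0 < \psi_{i}(D)$, and since the gallery is minimal, $\psi_{i}(D_{i-1}) < 0 < \psi_{i}(D_{i})$ for every $i$, so the gallery realizes $D$ as a sequence of positive reflections.

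For the reverse direction, induct on the length $n$ of the sequence. The base case $n = 0$ is $C_{0} \in S(C_{0}, \Phi^{+})$. Inductively, assume $D_{n-1} \in S(C_{0}, \Phi^{+})$ and let $D_{n} = s_{\psi_{n}}(D_{n-1})$ be a positive reflection. Pick any $\tilde\psi$ with $\mygrad(\tilde\psi) \in \Phi^{+}$ and $\tilde\psi(C_{0}) \ge 0$; I show $\tilde\psi(D_{n}) \ge 0$. If $H_{\tilde\psi}$ does not separate $D_{n-1}$ from $D_{n}$, then $\tilde\psi(D_{n})$ and $\tilde\psi(D_{n-1}) \ge 0$ have the same sign, so $\tilde\psi(D_{n}) \ge 0$. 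Otherwise $H_{\tilde\psi} = H_{\psi_{n}}$; matching positive orientations forces $\tilde\psi = \psi_{n}$, and then the sector condition applied to $D_{n-1}$ together with $\tilde\psi(C_{0}) \ge 0$ gives $\psi_{n}(D_{n-1}) \ge 0$, contradicting $\psi_{n}(D_{n-1}) < 0$ from the positive reflection hypothesis. So this latter case cannot occur, and $D_{n} \in S(C_{0}, \Phi^{+})$.

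The main subtlety is isolating the correct interpretation of ``repeated application.'' A purely algebraic reading — arbitrary words in the reflections $s_{\psi}$ with $\mygrad(\psi) \in \Phi^{+}$ — would generate the entire affine Weyl group, since $s_{\psi} = s_{-\psi}$ as an element of $W^{\text{a}}$, and the claimed equality would fail outright (already visible in rank one). The geometric reading above, in which each reflection is applied to the current chamber and pushes it outward in a positive root direction, is the one matching the half-space definition \eqref{csector} and is precisely what the height function of Lemma \ref{bruhatheight} is designed to track.
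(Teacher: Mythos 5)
Your argument is correct. There is nothing in the paper to compare it with: the proposition is stated there without proof, so your write-up in effect supplies the missing argument. Both halves are sound. For the forward inclusion, a minimal gallery from $C_{0}$ to $D$ crosses exactly the walls separating them, and your observation that a separating wall $H_{\psi}$ with $\mygrad (\psi) \in \Phi^{+}$ must have $\psi (C_{0}) < 0$ (otherwise $H_{\psi \ge 0}$ would be one of the defining half-spaces in \eqref{csector} and could not separate $C_{0}$ from $D$) is exactly the right point. For the converse, the induction works because two adjacent chambers are separated only by their common wall, so the only half-space of \eqref{csector} that could be violated in passing from $D_{n-1}$ to $D_{n}$ is $H_{\psi_{n} \ge 0}$ itself, which your positivity convention rules out. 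Your preliminary remark on the meaning of ``repeated application'' is necessary rather than pedantic: since $s_{\psi} = s_{-\psi}$ as elements of $W^{\text{\rm{a}}}$, the literal algebraic reading would generate every chamber of $\ScptA$, and the orientation you impose (each crossing pushes the current chamber to the nonnegative side of the wall for the representative with positive gradient) is the one consistent with the outward/inward terminology of \eqref{outward-inward} and with Lemma \eqref{bruhatheight}; moreover, your reverse induction shows a posteriori that such galleries never leave $S(C_{0},\Phi^{+})$, so the two natural readings (intermediate chambers constrained to lie in the sector or not) coincide. The only cosmetic slip is the phrase ``contradicting $\psi_{n}(D_{n-1}) < 0$'' when your stated hypothesis only gave $\psi_{n}(D_{n-1}) \le 0$; since $D_{n-1} \neq D_{n}$ are separated by $H_{\psi_{n}}$, the strict inequality does hold on $\mypfacet (D_{n-1})$, but it is worth saying so explicitly.
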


\begin{figure}[ht]
\includegraphics[width=10cm]{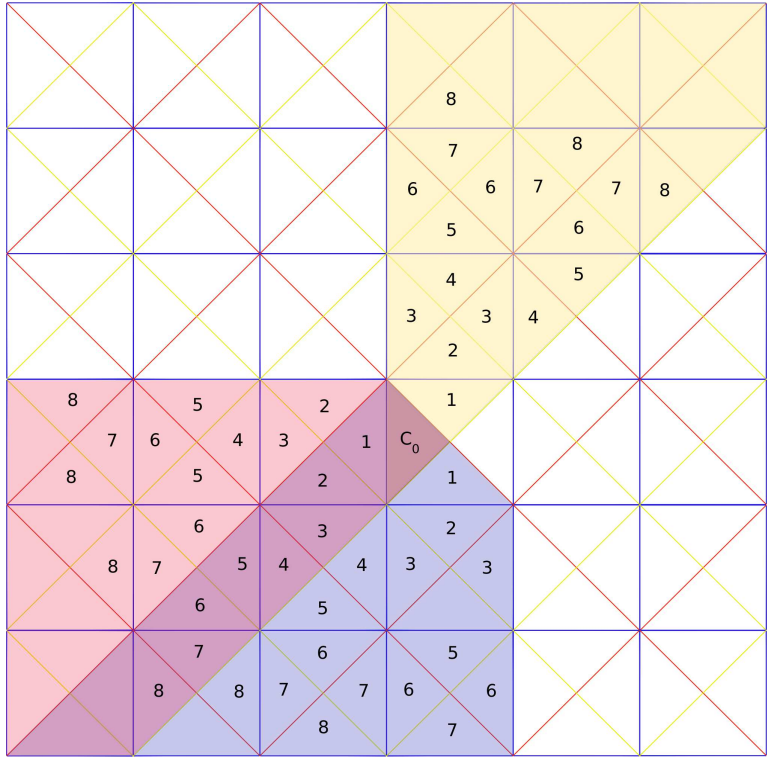}
\caption{Illustration of three chamber based sectors when $\Gk$ is of type ${\text{\rm{C}}}_2$.}
    \label{fig:figure2}
\end{figure}

\noindent{We} note:

\begin{itemize}
\item[$\bullet$] The union of the sets $S(C_{0},\Phi^{+})$, as $\Phi^{+}$ runs over positive roots subsets of $\Phi$, is $\ScptA$.
\smallskip
\item[$\bullet$] A chamber $D$ can belong to more than one $S(C_{0},\Phi^{+})$.  This happens when $D$ is near a (Weyl chamber) wall.
\end{itemize}

\medskip

If $D$ is a chamber of $\ScptA$, set  

$$
  \aligned
  {\mathcal R}(C_{0},D) \ :&= \ \{ \ {\text{\rm{$\Phi^{+}$ a set of positive roots}}} \ | \  D \subset S(C_{0},\Phi^{+}) \  \} \\
B(C_{0},D) \ :&= \ {\underset {\Phi^{+} \in {\mathcal R}(C_{0},D)} \bigcap } S(C_{0},\Phi^{+}) \ . \\
\endaligned
$$

\noindent{Alternatively}, we define a subset $\Phi (C_{0} , D)$ of the roots $\Phi$ as follows:

\begin{itemize}

\item[(i)] Suppose $\alpha$ is a root and there is an affine root $\psi$ with $\mygrad (\psi ) = \alpha$, and the zero-hyperplane $H_{\psi} \, (=H_{-\psi})$ separates $C_{0}$ and $D$.  We define $D$ to be $\mygrad (\psi )$ oriented with respect to $C_{0}$, if for $y \in H_{\psi}$ and $x \in \mypfacet (D)$ we have:
\begin{equation}
\langle \, ( \, x \, - \, y \, ) \, , \, \mygrad (\psi ) \, \rangle \ > \ 0 \ .
\end{equation}

\noindent{Obviously}, $D$ is either $\mygrad ( \psi)$  or $\mygrad ( -\psi)$ oriented with respect to $C_{0}$.   Also, $D$ is $\mygrad (\psi )$ oriented if, for any positive $\epsilon$, the point $y \, + \, \epsilon \, \mygrad (\psi )$ and points of $\mypfacet (D)$ are on the same side of the zero hyperplane $H_{\psi}$.  Set $\alpha = \mygrad (\psi )$.   Clearly, there is an affine root $\alpha - k$ with smallest possible $k$ so that $H_{\alpha - (k+1)}$ separates $C_{0}$ and $D$, but $H_{\alpha - k}$ does not, i.e., 
\smallskip
$$
\aligned
H_{(\alpha - k),>0} \ :&= \ \{ \ x \in \ScptA \ | \ (\alpha - k) (x) > 0 \ \} \ 
\endaligned
$$
\medskip
\noindent{contains} $\mypfacet (C_{0})$ and $\mypfacet (D)$, and is the smallest of the affine root half spaces $H_{\psi,>0}$ with $\mygrad (\psi ) = \alpha$ which does so.
  
\smallskip

\item[(ii)] Suppose $\alpha$ is a root so that for any affine root $\psi$ with $\mygrad (\psi ) = \pm \alpha$, the zero-hyperplane $H_{\psi}$ of $\psi$ does not separate $C_{0}$ and $D$.  Here, by replacing $\alpha$ by $-\alpha$ if necessary, there is an affine root $\psi = \alpha - k$ so that $\mypfacet (C_{0})$ and $\mypfacet (D)$ lie in the band
\smallskip
$$
B_{\alpha} \ := \ \{ \ x \in \ScptA \ | \ k \ < \ \alpha (x) \ < (k+1) \ \} \ .
$$    

\end{itemize}

\medskip

\noindent{We} define a canonical set $\Phi( C_{0},D)$ as:  
\smallskip
$$
\aligned
\Phi (C_{0},D) \ &= \ {\text{\rm{ the union of the roots $\mygrad (\psi )$ in (i) }}}  \, . 
\endaligned
$$
\smallskip
\noindent{In} the situation, when there are no roots of type (ii), then the roots $\Phi( C_{0},D)$ are a positive system of roots, and $B(C_{0},D) = S(C_{0},\Phi( C_{0},D))$.

\medskip
We explain the significance of the roots of type (i) and (ii).  Suppose $x \in C_{0}$ and $y \in D$.    Then, as we move from $x$ to $y$ along the line segment $(1-t)x+ty${\,}:

\begin{itemize}

\item[$\bullet$]  For $\alpha \in \Phi( C_{0},S)$, 
\begin{equation}
\aligned
\Gk_{((1-t)x+ty),0^{+}} \cap \Uk_{\alpha} \ &{\text{\rm{ increases from \ $\Gk_{x,0^{+}} \cap \Uk_{\alpha}$ \ to $\Gk_{y,0^{+}} \cap \Uk_{\alpha}$}}} \ , \\
\Gk_{((1-t)x+ty),0^{+}} \cap \Uk_{-\alpha} \ &{\text{\rm{ decreases from \ $\Gk_{x,0^{+}} \cap \Uk_{-\alpha}$ \ to $\Gk_{y,0^{+}} \cap \Uk_{-\alpha}$}}} \, .
\endaligned
\end{equation}

\item[$\bullet$] If $\pm \alpha$ is a root of type (ii), then 
\begin{equation}
\aligned
\Gk_{((1-t)x+ty),0^{+}} \cap \Uk_{\alpha} \ &{\text{\rm{ is constant }}} \ . \\
\endaligned
\end{equation}

\end{itemize}
\smallskip

\noindent{The} roots of type (ii) are the roots of a Levi subgroup $\Lk$ containing $\Sk$.  The roots of type (i), are those of a unipotent radical $\Vk$ of a parabolic subgroup $\Pk \, = \, \Lk \Vk$.  Let $\overline{\Vk}$ denote the opposite radical $\Vk$.  For $x \in \ScptA (\Sk)$, set  
\smallskip
$$
\aligned
\Lk_{x,0^{+}} \ := \ \Gk_{x,0^{+}} \, \cap \, \Lk \quad , \quad \Vk_{x,0^{+}} \ := \ \Gk_{x,0^{+}} \, \cap \, \Vk \quad , \quad {\overline{\Vk}}_{x,0^{+}} \ := \ \Gk_{x,0^{+}} \, \cap \, {\overline {\Vk}} \ \ .
\endaligned
$$
\medskip
\noindent{Then}
$$
\Gk_{x,0^{+}} \  =  \ \Lk_{x,0^{+}} \, \Vk_{x,0^{+}} \, {\overline {\Vk}}_{x,0^{+}}  \qquad {\text{\rm{ in any order}}} \, .
$$

\begin{prop} Suppose $D$ is a chamber of $\ScptA$.  Define the Levi $\Lk$ and $\Vk$ as above.  Let $c(D)$ be the set of child faces of $D$, and suppose $E, F \in {\mathcal C}(c(D))$, so that $F$ is a face of $E$.  Then, for $y \in \mypfacet{(E)}$, and $x \in \mypfacet{(F)}$:
$$
\Vk_{y,0^{+}} \ = \ \Vk_{x,0^{+}} \quad , {\text{\rm{and}}} \quad \Lk_{y,0^{+}} \ = \ \Lk_{x,0^{+}} \ .
$$
\end{prop}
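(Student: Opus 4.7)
The plan is to reduce the claim to a single codimension-one step and then invoke Proposition~\ref{childprop}. Since $E,F\in{\mathcal C}(c(D))$ with $F$ a face of $E$, write $E=F(Y_1)$ and $F=F(Y_2)$ for subsets $Y_1\subset Y_2\subset c(D)$ with $|Y_2|=|Y_1|+1$; then there is a unique child face $F_0\in c(D)\setminus Y_1$ with $F=E\cap F_0$. Let $H_\psi$ be the wall of $F_0$ with $\psi$ outward-oriented for $D$, and set $\alpha=\mygrad(\psi)$. For any $y\in\mypfacet(E)$ and $x\in\mypfacet(F)$, the open segment from $y$ to $x$ lies entirely in $\overline{E}$ and meets the hyperplane $H_\psi$ only at the endpoint $x$, crossing no other affine hyperplane.

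I would then analyze the root-group intersections $\Gk_{z,0^+}\cap\Uk_\beta$ as $z$ traverses this segment. For $\beta\ne\pm\alpha$, the function $\beta(z)$ is linear and remains in a single integer band, so $\Gk_{z,0^+}\cap\Uk_\beta$ is constant; consequently $\Gk_{y,0^+}\cap\Uk_\beta=\Gk_{x,0^+}\cap\Uk_\beta$. For $\beta=\alpha$, Proposition~\ref{childprop} applied to the child face $F_0$ of $D$ yields $\Gk_{y,0^+}\cap\Uk_\alpha=\Gk_{x,0^+}\cap\Uk_\alpha=\Xk_{\psi+1}$. For $\beta=-\alpha$: if $\alpha$ is of type~(i) then $-\alpha\in\Phi(\overline{\Vk})$, which contributes to neither $\Vk$ nor $\Lk$ and may be ignored; if $\alpha$ is of type~(ii) then $\pm\alpha\in\Phi(\Lk)$, and I would deduce the analogous equality $\Gk_{y,0^+}\cap\Uk_{-\alpha}=\Gk_{x,0^+}\cap\Uk_{-\alpha}$ by an argument parallel to Proposition~\ref{childprop}, now applied with the opposite orientation and exploiting the fact that the type~(ii) condition forces $C_0$ and $D$ to sit in a common integer band for the gradient $\pm\alpha$.

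Assembling these root-by-root equalities via the standard root-group factorizations $\Vk\cap\Gk_{z,0^+}=\prod_{\beta\in\Phi(\Vk)}(\Uk_\beta\cap\Gk_{z,0^+})$ and $\Lk\cap\Gk_{z,0^+}=(\Sk\cap\Gk_{z,0^+})\cdot\prod_{\beta\in\Phi(\Lk)}(\Uk_\beta\cap\Gk_{z,0^+})$ (in any fixed order), and using that the torus piece $\Sk\cap\Gk_{z,0^+}$ is independent of $z\in\ScptA$, one concludes $\Vk_{y,0^+}=\Vk_{x,0^+}$ and $\Lk_{y,0^+}=\Lk_{x,0^+}$. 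The hardest part will be the $\beta=-\alpha$ case when $\alpha$ is of type~(ii), since a direct inspection of the Moy--Prasad filtration suggests a priori that $\Gk_{z,0^+}\cap\Uk_{-\alpha}$ could shrink upon reaching $H_\psi$; the resolution must exploit the symmetric band structure encoded in the type~(ii) definition to ensure that both the $\Uk_\alpha$ and $\Uk_{-\alpha}$ intersections stabilize to the same affine root subgroup before and after the crossing.
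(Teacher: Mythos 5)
Your reduction to a single codimension-one step $F=E\cap F_{0}$ and your use of Proposition \eqref{childprop} for the outward gradient $\alpha$ are consistent with the paper, whose entire proof is the one line ``Apply Proposition \eqref{childprop}.'' However, two of your steps do not survive scrutiny. First, the claim that for $\beta\ne\pm\alpha$ the group $\Gk_{z,0^{+}}\cap\Uk_{\beta}$ stays constant along the segment is false in general: when $F$ has codimension at least two in $\ScptA$, further affine hyperplanes pass through $F$ without containing $E$ (in type $A_{2}$ three hyperplane pairs meet at a vertex but only one contains a given edge), and for every affine root $\phi$ that is positive on $\mypfacet{(E)}$ and vanishes on $F$ the group $\Gk_{z,0^{+}}\cap\Uk_{\mygrad(\phi)}$ drops from $\Xk_{\phi}$ to $\Xk_{\phi+1}$ at the endpoint $x$. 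To conclude you would still have to show every such $\mygrad(\phi)$ lies among the roots of $\overline{\Vk}$, which is precisely the delicate point and is not addressed.

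Second, and more seriously, the case you correctly single out as the hardest --- $\beta=-\alpha$ with $\pm\alpha$ of type (ii) --- has no resolution of the kind you hope for: the equality genuinely fails. With $\psi$ outward for $F_{0}$ one has $-\psi\in(0,1)$ on $\mypfacet{(E)}$ and $-\psi\equiv 0$ on $F$, hence $\Gk_{y,0^{+}}\cap\Uk_{-\alpha}=\Xk_{-\psi}$ while $\Gk_{x,0^{+}}\cap\Uk_{-\alpha}=\Xk_{-\psi+1}$, a strictly smaller group; the common integer band containing $C_{0}$ and $D$ is irrelevant because the jump occurs at $H_{\psi}$ itself, not at an intermediate wall. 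Concretely, for $\SL_3$ and $D=\{\,n<\alpha_{1},\ 0<\alpha_{2},\ \alpha_{1}+\alpha_{2}<n+1\,\}$, the Levi $\Lk$ has roots $\pm\alpha_{2}$ (type (ii)), the face $F_{0}=\overline{D}\cap H_{\alpha_{2}}$ is a child face, and $\Gk_{y,0^{+}}\cap\Uk_{\alpha_{2}}=\Xk_{\alpha_{2}}\supsetneq\Xk_{\alpha_{2}+1}=\Gk_{x,0^{+}}\cap\Uk_{\alpha_{2}}$, so $\Lk_{y,0^{+}}\ne\Lk_{x,0^{+}}$. Thus under your (natural) reading of ${\mathcal C}(c(D))$ the statement itself fails whenever a child wall has a type (ii) gradient (and under the literal reading of ${\mathcal C}(c(D))$ the $\Vk$-equality fails instead); what does hold along your facets is the constancy of $\overline{\Vk}_{\bullet,0^{+}}$ and of the root groups for gradients of type (i). Since the paper's proof is a bare citation of Proposition \eqref{childprop}, which only controls the outward direction, and this Proposition is not used later, the defect appears to lie in the statement rather than in your analysis; but as written your argument cannot be completed.
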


\begin{proof} \quad Apply Proposition \eqref{childprop}.
\end{proof}

\vskip 0.40in

\subsection{A finiteness condition on chambers}  \quad  If $D$ is a chamber, set

\begin{equation}
  \Xi (D) \ := \ {\text{\rm{set of affine root pairs $\pm \psi$ so that $H_{\pm \psi} \cap D$ is a face of $D$}}} \, .
\end{equation}

\medskip

\noindent{Define} a nonempty set of affine root pairs ${\mathcal X} = \{ \, \pm \psi_{1}, \, \dots \, , \pm \psi_{k} \, \}$ to be {\it permissible} if{\,}:

\smallskip

\begin{itemize}
\item[$\bullet$] $\mygrad (\psi_{1}), \, \dots \, , \, \mygrad (\psi_{k})$ are linearly independent.  Here, we have selected one affine root from each pair.  Let{\,}:
\smallskip
\begin{equation}\label{permissiblzero}
  \ScptA \ScptS ({\mathcal X}) \ := \ H_{\pm \psi_{1}}  \, \, \cap \, \cdots  \, \cap \, \ H_{\pm \psi_{k}} \  . 
\end{equation}
\noindent{a} $(\ell - k)$-dimensional affine subspace of $\ScptA$.
\vskip 0.05in
\item[$\bullet$] There exists a chamber $D$ so that each intersection $H_{\psi_{i}} \cap D$ is a face of $D$.  We name this relationship as the chamber  $D$ being {\it incident} with ${\mathcal X}$.  In this situation, the intersection $D \, \cap \, \ScptA \ScptS ({\mathcal X})$ is a $( \ell - k)$-dimensional facet of $D$.  

\end{itemize}

\smallskip

\noindent{We} observe{\,}:
\begin{equation}\label{permissibleone}
\bullet \ \begin{tabular}{p{4.5in}}
If $D$ is a chamber, then any nonempty proper subset  ${\mathcal X} = \{ \, \pm \psi_{i_1} \, , \,\dots \, , \, \pm \psi_{i_k} \, \}$ of $\Xi (D)$  is permissible.  \\
\end{tabular} \hskip 0.22in
\end{equation}

\smallskip

\noindent{When} $D$ is incident with ${\mathcal X}$, let
$$
{\mathcal X}_{D}' \ = \ \{ \, \pm \phi_{k+1}, \, \dots \, , \, \pm \phi_{\ell+1} \, \} \ = \ \Xi (D) \ \backslash \ {\mathcal X}
$$
\noindent{be} the affine root pairs so that the remaining $(\ell + 1 - k)$  faces of $D$ lie in the affine hyperplanes  $H_{\pm \phi_{k+1}}, \, \dots \, , \, H_{\pm \phi_{\ell+1}}$.  The affine subspace $\ScptA \ScptS ({\mathcal X}_{D}')$ has dimension $(k-1)$, and the two subsimplices $( \, D \, \cap \, \ScptA \ScptS ({\mathcal X}) \, )$ and $( \, D \, \cap \, \ScptA \ScptS ({\mathcal X}_{D}') \, )$ of the simplex ${D}$ are opposite subsimplices, i.e., any vertex of ${D}$ lies in precisely one of the two subsimplices.   Define vector spaces{\,}
\medskip
$$
\aligned
\myVect ( \, \ScptA \ScptS ({\mathcal X}) \, ) \ :&= \ {\text{\rm{translations of $\ScptA$ which leave $\ScptA \ScptS ({\mathcal X})$ invariant}}} \\
\myVect ( \, \ScptA \ScptS ({\mathcal X}_{D}') \, ) \ :&= \ {\text{\rm{translations of $\ScptA$ which leave $\ScptA \ScptS ({\mathcal X}_{D}')$ invariant}}} \\
\endaligned
$$

\medskip

\noindent{A} consequence of the oppositeness of the two subsimplices $( \, {D} \, \cap \, \ScptA \ScptS ({\mathcal X}) \, )$ and  $( \, D \, \cap \, \ScptA \ScptS ({\mathcal X}_{D}') \, )$ of $D$ is  that $\myVect ( \, \ScptA \ScptS ({\mathcal X}) \, )$ and $\myVect ( \, \ScptA \ScptS ({\mathcal X}_{D}') \, )$ are linearly independent.   They span a subspace of dimension $(\ell - 1)$.

\medskip
  
We note if ${\mathcal X}$ is permissible and $\dim (\ScptA \ScptS ({\mathcal X}) ) = (\ell - k) > 0$, then there are infinitely many distinct chambers $D$ incident with ${\mathcal X}$, and $\ScptA \ScptS ({\mathcal X})$ is tiled by the intersections ${D} \, \cap \, \ScptA \ScptS ({\mathcal X})$ as $D$ runs over all chambers incident with ${\mathcal X}$.   Indeed, let $\Sk_{c}$ be the maximal bounded (hence compact) subgroup of $\Sk$.   We recall the discrete (free commutative of rank $\ell$) group  $\Gamma ( \ScptA ) := \Sk / \Sk_{c}$ acts as translations on $\ScptA$.   Set
\smallskip
$$
\aligned
\Gamma (\ScptA \ScptS ({\mathcal X}) ) \ :&= \ {\text{\rm{subgroup of $\Gamma ( \ScptA ) $ preserving $\ScptA \ScptS ({\mathcal X})$}}} \ , \\
\Gamma (\ScptA \ScptS ({\mathcal X}'_{D}) ) \ :&= \ {\text{\rm{subgroup of $\Gamma ( \ScptA ) $ preserving $\ScptA \ScptS ({\mathcal X}'_{D})$}}} \ .
\endaligned
$$

\noindent{These} subgroups are of ranks $(\ell -k)$ and $(k-1)$ respectively, and 
\smallskip
$$
\myVect(\ScptA \ScptS ({\mathcal X})) \ = \ \Gamma (\ScptA \ScptS ({\mathcal X}) ) \otimes_{\bZ} \bR \qquad , \qquad \myVect(\ScptA \ScptS ({\mathcal X}'_{D})) \ = \ \Gamma (\ScptA \ScptS ({\mathcal X}'_{D}) ) \otimes_{\bZ} \bR \ .
$$

\smallskip

\noindent{If} $D$ is an incident chamber to ${\mathcal X}$, and $x \in \Gamma (\ScptA \ScptS ({\mathcal X}) )$, then the translated chamber $D + x$ is also incident.  We deduce there are finitely many distinct ${\mathcal X}$ incident chambers $D_1 \, , \, \dots \, , \, D_{M}$  so that the union of facets 
\smallskip
\begin{equation}\label{tilingone}
{\text{\rm{UF}}} \ = \ ( \, D_1 \, \cap \, \ScptA \ScptS ({\mathcal X}) \, ) \ \cup \ \dots \ \cup \ ( \, D_{M} \, \cap \, \ScptA \ScptS ({\mathcal X}) \, )
\end{equation} 

\medskip

\noindent{is} a fundamental domain for the translation action of $\Gamma (\ScptA \ScptS ({\mathcal X}) )$ on $\ScptA \ScptS ({\mathcal X})$.

\medskip

\begin{prop}\label{keypermissible} Fix a chamber $C_{0}$, and suppose ${\mathcal X} = \{ \, \pm \psi_{1}, \, \dots \, , \pm \psi_{k} \, \}$ is a permissible set of affine roots.  Then, the number of chambers $D$ which are incident with ${\mathcal X}$, and satisfy

$$
c(D) \ = \ \{ \ H_{\pm \psi_{i}} \ \cap \ D \ | \ 1 \le i \le k \ \} 
$$

\noindent{(notation \eqref{chamberinout})}  is finite.
\end{prop}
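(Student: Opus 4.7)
My plan is to parametrize the incident chambers via the translation lattice $\Gamma(\ScptA\ScptS(\mathcal{X}))$, re-encode the $c(D)$-condition as a system of strict linear inequalities in the translation parameter, and then use a positive-spanning argument to conclude the solution set is bounded and thus contains only finitely many lattice points.

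First, invoking the discussion preceding \eqref{tilingone}, every chamber $D$ incident with $\mathcal{X}$ can be written as $D = D_{i} + \gamma$ for some $i \in \{1,\ldots,M\}$ and some $\gamma \in \Gamma(\ScptA\ScptS(\mathcal{X}))$, where $D_{1},\ldots,D_{M}$ are the finite representatives tiling a fundamental domain. Because $\Gamma(\ScptA\ScptS(\mathcal{X})) = \Gamma(\ScptA) \cap \bigcap_{r} \ker(\mygrad(\psi_{r}))$, translation by $\gamma$ preserves each hyperplane $H_{\pm\psi_{r}}$ setwise and preserves each half-space it bounds; hence whether the $k$ faces $H_{\pm\psi_{r}} \cap (D_{i}+\gamma)$ lie in $c(D_{i}+\gamma)$ depends only on $D_{i}$ and not on $\gamma$, so I may discard those $D_{i}$ for which this already fails and work only with the surviving representatives.

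For each surviving $D_{i}$, fix a point $y_{0} \in \mypfacet(C_{0})$ and an interior point $z_{i} \in \mypfacet(D_{i})$, and denote by $\phi_{k+1}^{(i)},\ldots,\phi_{\ell+1}^{(i)}$ the affine roots vanishing on the $\ell+1-k$ non-$\mathcal{X}$ faces of $D_{i}$, normalized so that $\phi_{j}^{(i)}(z_{i}) > 0$. The affine root defining the corresponding face of $D_{i}+\gamma$ is the shift $\tilde{\phi}_{j}(y) = \phi_{j}^{(i)}(y) - \langle \mygrad(\phi_{j}^{(i)}),\gamma\rangle$, which is still positive at the interior point $z_{i}+\gamma$; hence that face is a parent face of $D_{i}+\gamma$ exactly when $\tilde{\phi}_{j}(y_{0}) < 0$, i.e.,
\begin{equation*}
\langle \mygrad(\phi_{j}^{(i)}),\,\gamma\rangle \ > \ \phi_{j}^{(i)}(y_{0}).
\end{equation*}
The admissible translations thus form the intersection $\Gamma(\ScptA\ScptS(\mathcal{X})) \cap \mathcal{P}_{i}$, where $\mathcal{P}_{i} \subset \myVect(\ScptA\ScptS(\mathcal{X}))$ is the open polyhedron cut out by these $\ell+1-k$ strict inequalities (with each $\mygrad(\phi_{j}^{(i)})$ interpreted via its restriction to $\myVect(\ScptA\ScptS(\mathcal{X}))$).

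The crucial step is to show that the restricted gradients $\mygrad(\phi_{j}^{(i)})|_{\myVect(\ScptA\ScptS(\mathcal{X}))}$ for $k+1 \le j \le \ell+1$ positively span the dual $\myVect(\ScptA\ScptS(\mathcal{X}))^{*}$; granted this, $\mathcal{P}_{i}$ is a bounded open polyhedron and therefore contains only finitely many lattice points of $\Gamma(\ScptA\ScptS(\mathcal{X}))$. To verify the positive-spanning property, take any nonzero $v \in \myVect(\ScptA\ScptS(\mathcal{X}))$: the line $t \mapsto z_{i}+tv$ exits the bounded simplex $D_{i}$ at some finite positive time through a face, and that face cannot be an $\mathcal{X}$-face since $\langle \mygrad(\psi_{r}),v\rangle = 0$ for every $r$, so some $j > k$ satisfies $\langle \mygrad(\phi_{j}^{(i)}),v\rangle < 0$; applying the same argument to $-v$ completes the spanning. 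Summing over the finitely many surviving $D_{i}$ then yields the claim. The main obstacle is this positive-spanning step, but once one observes that the $\mathcal{X}$-face gradients are exactly the annihilators of $\myVect(\ScptA\ScptS(\mathcal{X}))$, boundedness of $D_{i}$ delivers the needed relation essentially for free.
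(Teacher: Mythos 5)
Your proof is correct and follows the same overall strategy as the paper: reduce modulo the lattice $\Gamma(\ScptA\ScptS({\mathcal X}))$ to finitely many representative incident chambers, observe that the child/parent status of the ${\mathcal X}$-faces is translation-invariant, and then show that the admissible translations for each representative form a bounded set. Where the paper phrases the condition as $C_{0}$ lying in a cone ${\text{\rm{Cone}}}({\mathcal X},D+y)$ and simply asserts compactness of the intersection of that cone with a band around $C_{0}$, you unpack the same condition into explicit strict linear inequalities on $\gamma$ and justify boundedness of the resulting polyhedron by a recession-cone argument (a ray in $\myVect(\ScptA\ScptS({\mathcal X}))$ must exit the bounded simplex through a non-${\mathcal X}$ face, so some restricted gradient $\mygrad(\phi_{j}^{(i)})$ is negative on it). This makes explicit the one step the paper leaves implicit, and is a worthwhile refinement; the two arguments are otherwise the same.
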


\medskip

We remark about the extreme cases:
\medskip
\begin{itemize}
\item[$\bullet$]  If $k = \ell$, and ${\mathcal X}$ is permissible, then there is a unique chamber incident with ${\mathcal X}$, so the assertion is obvious.
\smallskip
\item[$\bullet$] If $k=1$, the singleton pair set ${\mathcal X} = \{ \pm \psi \}$ is automatically permissible,  $\ScptA \ScptS ({\mathcal X})$ is $H_{\pm \psi }$, and for any ${\mathcal X}$-incident chamber $D$, the intersection $E = (D \cap \ScptA \ScptS ({\mathcal X}))$ is a face of $D$, and $(D \cap \ScptA \ScptS ({\mathcal X}_{D}'))$ is the vertex opposite to $E$.
\end{itemize}

\bigskip

\begin{proof} \quad  Suppose  $D$ is a chamber incident with ${\mathcal X}$, and ${\mathcal X}_D' =   \Xi (D) \backslash {\mathcal X}$.    For each affine root pair $\pm \psi_{i}$ in ${\mathcal X}$, or $\pm \phi_{j}$ in ${\mathcal X}'_{D}$ it is convenient for us to select a preferred affine root.  We do this by designating $\psi_{i}$ to be the affine root so that the chamber $D$ lies in the half-space $H_{\psi_{i} \le 0}$ for $1 \le i \le k$.  Similarly, for a pair $\pm \phi_{j}$ in ${\mathcal X}'_{D}$, we designate $\phi_{j}$ to be the affine root so that $D$ is in the half-space $H_{\phi_{i} \ge 0}$.  

\medskip

The Proposition's hypothesis that $c(D)$ is the set $\{ \ H_{\psi_{i}} \ \cap \ D \ | \ 1 \le i \le k \ \}$ means

\begin{equation}\label{cone-one}
    C_{0} \quad \subset \quad {\text{\rm{Cone}}}({\mathcal X},D) \ := \  \big( \ {\underset {1 \le i \le k} \bigcap } H_{\psi_{i} \le 0 } \big)  \quad \bigcap \quad \big( {\underset {(k + 1) \le i \le (\ell + 1)} \bigcap } H_{\phi_{i} \le 0 } \ \big) \ .
\end{equation}

\noindent{We} define and observe: 
$$
\aligned
{\text{\rm{Cone}}}_{\mathcal X} \ :&= \ \big( \ {\underset {1 \le i \le k} \bigcap } H_{\psi_{i} \le 0 } \big) \quad \begin{tabular}{p{2.7in}}
depends only on the permissible set ${\mathcal X}$, and not on the chamber $D$, \\
\end{tabular} \\
{\text{\rm{Cone}}}_{D} \ :&= \ \big( {\underset {(k + 1) \le i \le (\ell + 1)} \bigcap } H_{\phi_{i} \le 0 } \ \big) \quad {\text{\rm{is dependent on the incident chamber $D$.}}} \\
\endaligned
$$  

\noindent{Obviously}, the cones ${\text{\rm{Cone}}}_{\mathcal X}$ and ${\text{\rm{Cone}}}_{D}$ are $\myVect (\ScptA \ScptS ({\mathcal X}))$--invariant and  $\myVect (\ScptA \ScptS ({\mathcal X}'_{D}))$--invariant respectively.  If we translate the chamber $D$ by $y \in \Gamma  ( \, \ScptA \ScptS ({\mathcal X}) \, )$, to obtain another chamber $D+y$, we have{\,}:
$$
{\text{\rm{Cone}}}_{D+y} \ = \ ( \, {\text{\rm{Cone}}}_{D} \, ) + y \ \ , \ \ {\text{\rm{and}}} \quad 
{\text{\rm{Cone}}}({\mathcal X},D+y) \ = \ {\text{\rm{Cone}}}({\mathcal X},D) + y \ .
$$

\medskip

We consider the $(\myVect (\ScptA \ScptS ({\mathcal X})) \, + \, \myVect (\ScptA \ScptS ({\mathcal X}'_{D}))$-invariant band
$$
\aligned
  {\mathcal Bn}(C_{0}) \ :&= \ C_{0} \, + \, \myVect (\ScptA \ScptS ({\mathcal X})) \, + \, \myVect (\ScptA \ScptS ({\mathcal X}'_{D})) \\
&= \ \{ \, x + y + z \ | \ x \in C_{0} \ , \ y \in \myVect (\ScptA \ScptS ({\mathcal X})) \ , \ z \in \myVect (\ScptA \ScptS ({\mathcal X}'_{D})) \,  \} \, .
\endaligned
$$
\noindent{if} $C_{0}$ lies in both ${\text{\rm{Cone}}}({\mathcal X},D)$ and ${\text{\rm{Cone}}}({\mathcal X},D+y)$, we must have 
\begin{equation}\label{c0interesction}
C_{0} \ \subset \ \big( \, {\text{\rm{Cone}}}({\mathcal X},D) \cap {\mathcal Bn}(C_{0}) \, \big) \ \cap \ \big( \, \big( \, {\text{\rm{Cone}}}({\mathcal X},D) \cap {\mathcal Bn}(C_{0}) \, \big) \, + \, y \, \big) \ .
\end{equation}
Since the set $({\text{\rm{Cone}}}({\mathcal X},D) \cap {\mathcal Bn}(C_{0}))$ is compact, and $\Gamma  ( \, \ScptA \ScptS ({\mathcal X}) \, )$ are (discrete) translations, the set of $y \in  \Gamma  ( \, \ScptA \ScptS ({\mathcal X}) \, )$ satisfying \eqref {c0interesction} is finite.  

\medskip

To summarize: \ If $D$ is a chamber incident to (a permissible set) ${\mathcal X}$ and $C_{0} \subset  {\text{\rm{Cone}}}({\mathcal X},D)$, then there are only finitely many $y \in  \Gamma  ( \, \ScptA \ScptS ({\mathcal X}) \, )$ satisfying $C_{0} \subset  {\text{\rm{Cone}}}({\mathcal X},D+y)$ as well.   As mentioned above, we can find finitely many disjoint incident chambers so that the union \eqref{tilingone} of their closures is a fundamental domain for  $\Gamma  ( \, \ScptA \ScptS ({\mathcal X}) \, )$.  The statement of the Proposition follows.
\end{proof}

\bigskip

\subsection{A criterion for a distribution to be essentially compact} \

\medskip

A much exploited criterion (see [{\reBD}:{\S}1.7]) for a distribution $D$ to be essentially compact is the following.

\smallskip

\begin{lemma} \  Suppose $D$ is a $\Gk$-invariant distribution on $\Gk$.  Then, $D$ is essentially compact, and therefore in the Bernstein center ${\mathcal Z}(\Gk )$,  if and only if for any open compact subgroup $J$,  the function 
$D \star 1_{J}$  is compactly supported. 
\end{lemma}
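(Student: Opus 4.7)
The plan is straightforward in both directions. The forward implication is immediate: since $1_J = \meas(J)\, e_J$ lies in $C^\infty_c(\Gk)$, essential compactness of $D$ forces $D \star 1_J \in C^\infty_c(\Gk)$, so in particular $D \star 1_J$ is compactly supported.

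For the converse I would deduce $D \star f \in C^\infty_c(\Gk)$ and $f \star D \in C^\infty_c(\Gk)$ from the hypothesis that $D \star 1_J$ is compactly supported for every open compact subgroup $J$. As a preliminary observation I would first check that $D \star 1_J$ is automatically right $J$-invariant: for $j \in J$ the equivalence $y^{-1}xj \in J \Leftrightarrow y^{-1}x \in J$ gives $(D \star 1_J)(xj) = \langle D_y,\, 1_J(y^{-1}xj)\rangle = \langle D_y,\, 1_J(y^{-1}x)\rangle = (D \star 1_J)(x)$, so the compact support hypothesis actually promotes $D \star 1_J$ to an element of $C^\infty_c(\Gk)$. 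Next, given an arbitrary $f \in C^\infty_c(\Gk)$, I would choose an open compact subgroup $J$ small enough that $f$ is bi-$J$-invariant; since the support of $f$ meets only finitely many right $J$-cosets, one can write $f = \sum_{i=1}^{N} c_i\, 1_{g_i J}$ with finitely many $g_i \in \Gk$ and $c_i \in \bC$.

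The whole reduction then rests on the identity
\begin{equation}\label{planconjident}
D \star (L_g \phi) \ = \ L_g (D \star \phi) \qquad \forall\ g \in \Gk,\ \phi \in C^\infty_c(\Gk),
\end{equation}
where $L_g$ denotes left translation. The plan is to establish \eqref{planconjident} by the change of variable $y \mapsto g z g^{-1}$ in the defining integral $(D \star L_g \phi)(x) = \langle D_y,\, \phi(g^{-1} y^{-1} x)\rangle$ and using the $\Gk$-invariance of $D$ to absorb the conjugation. Granting \eqref{planconjident} and writing $1_{g_i J} = L_{g_i} 1_J$ yields $D \star f = \sum_i c_i\, L_{g_i}(D \star 1_J) \in C^\infty_c(\Gk)$. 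For the companion assertion $f \star D \in C^\infty_c(\Gk)$, a parallel change of variable, using unimodularity of $\Gk$ together with the $\Gk$-invariance of $D$ (which supplies the conjugacy $x z^{-1} \sim z^{-1} x$ by $x^{-1}$), gives $f \star D = D \star f$ and reduces this case to what is already proved. The main — though mild — technical point is the careful verification of \eqref{planconjident} and of the commutation $f \star D = D \star f$; both are routine Haar-measure manipulations with conjugation, so I do not expect any serious obstacle, and the overall argument is essentially the standard criterion recorded in [{\reBD}:{\S}1.7].
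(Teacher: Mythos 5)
Your argument is correct and complete in both directions; the paper itself does not prove this lemma but simply refers to [{\reBD}:{\S}1.7], and what you have written is exactly the standard argument behind that citation. The two points that need care --- promoting the compactly supported $D \star 1_{J}$ to an element of $C^{\infty}_{c}(\Gk )$ via its right $J$-invariance, and using conjugation-invariance of $D$ (plus unimodularity) to get $D \star L_{g}\phi = L_{g}(D \star \phi )$ and $f \star D = D \star f$ --- are both handled properly, so nothing is missing.
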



\vskip 0.70in 
 


\section{Harish-Chandra cuspidal classes and idempotents}\label{harishchandra}

\medskip

Throughout this section, $\SamG$ is a connected reductive linear algebraic group defined over a finite field $\bF_q$.   When $\SamP$ is a parabolic subgroup of $\SamG$, we denote its unipotent radical by  ${\text{\rm{rad}}}(\SamP)$.  Given a fixed Borel subgroup $\SamB \subset \SamG$, a $\SamB$-standard parabolic subgroup, is one which contains $\SamB$.  Let  $\GFq=\SamG (\bF_q)$, $\BFq = {\SamB} (\bF_q)$, $\PFq = {\SamP}(\bF_q)$, etc, denote the groups of $\bF_q$-rational points.  We take the Haar measure on $\GFq$ and its subgroups to be the point mass. 

\medskip

\subsection{A convolution property of idempotents of ${\text{\rm{rad}}}(\PFq)$} \quad  For $\HFq$ any subgroup of $\GFq$, and a representation $\kappa$  of $\HFq$, let $\Theta_{\kappa}$ signify its character.  The function 

\begin{equation}
e_{\tau} \ := \ {\frac{1}{\# (\HFq)}} \ \ {\text{\rm{deg}}} (\kappa ) \ \Theta_{\kappa} \ = {\frac{1}{\# (\HFq)}} \ \ {\text{\rm{deg}}} (\kappa ) \ {\underset {g \in \HFq} \sum} \ \ \Theta_{\kappa} (g) \, \, \delta_{g} \ 
\end{equation}

\noindent{is} a convolution idempotent (in $C(\GFq )$).

\medskip

Take $\TFq$ to be a maximal torus in $\BFq$, let $\Phi^{+} = \Phi^{+}(\TFq)$ be the positive roots determined by $\BFq$, and let $\Delta$ be the simple roots of $\Phi^{+}$.  \ The $\BFq$-standard parabolic subgroups are indexed by subsets of $\Delta$.  A subset $\frcA \subset \Delta$ corresponds to the parabolic subgroup $\PFq_{ \frcAs }$ generated by the Borel subgroup $\BFq$ and the root groups $\UFq_{-\alpha}$, $\alpha \in \frcA$.  In particular, $\PFq_{\emptyset} = \BFq$, and $\PFq_{\Delta} = \GFq$.

\medskip

\begin{lemma}  Suppose $\RFq  \supsetneq \QFq$ are two $\BFq$-standard parabolic subgroups and $\WFq = {\text{\rm{rad}}}( \RFq )$, $\VFq = {\text{\rm{rad}}}(\QFq)$ are their unipotent radicals.  Then, 
$$
\Big( \ 
{\underset 
{\RFq \supset \PFq \supset \BFq}
\sum } 
(-1)^{{\text{\rm{rank}}}(\PFq)} \ e_{{\text{\rm{rad}}}(\PFq)} \ \Big) \ \star \ e_{\VFq} \ = \ {\text{\rm{zero function}}} \, .
$$
\end{lemma}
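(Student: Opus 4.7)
The strategy is to collapse each convolution $e_{\text{\rm{rad}}(\PFq)}\star e_{\VFq}$ into a single idempotent attached to a standard parabolic, and then finish by a pure inclusion--exclusion on subsets of $\Delta$. Parametrize the $\BFq$-standard parabolics by subsets of $\Delta$ in the usual way, so that $\RFq=\PFq_{\Delta_R}$ for some $\Delta_R\subseteq\Delta$, $\QFq=\PFq_{\frcB}$ for some $\frcB\subsetneq\Delta_R$, and the parabolics $\PFq$ appearing in the sum are exactly the $\PFq_{\frcA}$ with $\frcA\subseteq\Delta_R$; the rank function $\myrank(\PFq_{\frcA})$ agrees with $|\frcA|$ up to an overall additive constant that is immaterial to the statement. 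The hypothesis $\QFq\subsetneq\RFq$ translates into $\Delta_R\setminus\frcB\neq\emptyset$, and this is the fact that will eventually force cancellation.

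The first step is to establish the convolution identity
\[
e_{\text{\rm{rad}}(\PFq_{\frcA})}\,\star\,e_{\VFq}\;=\;e_{\text{\rm{rad}}(\PFq_{\frcA\cap\frcB})}.
\]
The general mechanism is the elementary formula $e_{U}\star e_{V}=e_{UV}$, valid for any two finite subgroups whose set-theoretic product $UV$ is again a subgroup; this follows from $\mathbf{1}_{U}\star\mathbf{1}_{V}=|U\cap V|\,\mathbf{1}_{UV}$ and $|UV|\,|U\cap V|=|U|\,|V|$. Since $\text{\rm{rad}}(\PFq_{\frcA})$ and $\VFq=\text{\rm{rad}}(\PFq_{\frcB})$ are both normal in $\UFq^{+}$, their product is automatically a subgroup. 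To identify that product as $\text{\rm{rad}}(\PFq_{\frcA\cap\frcB})$ one uses the root-group decomposition $\text{\rm{rad}}(\PFq_{\frcC})=\prod_{\alpha\in\Phi^{+}\setminus\Phi_{\frcC}}\UFq_{\alpha}$ together with the subsystem identity $\Phi_{\frcA}\cap\Phi_{\frcB}=\Phi_{\frcA\cap\frcB}$, which is a direct consequence of the linear independence of $\Delta$; one then obtains $(\Phi^{+}\setminus\Phi_{\frcA})\cup(\Phi^{+}\setminus\Phi_{\frcB})=\Phi^{+}\setminus\Phi_{\frcA\cap\frcB}$, giving the desired product equality.

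Substituting this convolution identity into the alternating sum reduces the lemma to
\[
\sum_{\frcA\subseteq\Delta_R}(-1)^{|\frcA|}\,e_{\text{\rm{rad}}(\PFq_{\frcA\cap\frcB})}\;=\;\sum_{\frcC\subseteq\frcB}(-1)^{|\frcC|}\,e_{\text{\rm{rad}}(\PFq_{\frcC})}\,\biggl(\sum_{\frcA'\subseteq\Delta_R\setminus\frcB}(-1)^{|\frcA'|}\biggr),
\]
obtained by writing each $\frcA$ uniquely as $\frcA=\frcC\sqcup\frcA'$ with $\frcC=\frcA\cap\frcB\subseteq\frcB$ and $\frcA'\subseteq\Delta_R\setminus\frcB$. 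The inner alternating sum equals $0$ because $\Delta_R\setminus\frcB$ is non-empty, so every coefficient on the right vanishes and the lemma follows. The only genuinely non-routine ingredient is the group-theoretic step $\text{\rm{rad}}(\PFq_{\frcA})\cdot\text{\rm{rad}}(\PFq_{\frcB})=\text{\rm{rad}}(\PFq_{\frcA\cap\frcB})$; once that is in hand, the remainder is a standard convolution fact combined with a one-line combinatorial cancellation.
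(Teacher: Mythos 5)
Your proof is correct and follows essentially the same route as the paper's: both reduce to the identity $e_{\text{\rm{rad}}(\PFq_{\frcA})}\star e_{\text{\rm{rad}}(\PFq_{\frcB})}=e_{\text{\rm{rad}}(\PFq_{\frcA\cap\frcB})}$ and then cancel via the alternating sum over subsets of the nonempty complement $\frcR\setminus\frcQ$. The only difference is that you supply a justification (product of normal subgroups of the maximal unipotent, plus $\Phi_{\frcA}\cap\Phi_{\frcB}=\Phi_{\frcA\cap\frcB}$) for the convolution identity that the paper simply asserts.
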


\begin{proof}  \quad   As mentioned above, we index a $\BFq$-standard parabolic subgroup by subset $\frcA \subset \Delta$.  For convenience, we use the notation $e_{\frcAs}$ to denote the idempotent $e_{ {\text{\rm{rad}}} (P_{\frcAs}) }$. Then, \ $e_{\frcAs} \star e_{\frcBs} \, = \, e_{\frcAs \cap \frcBs}$.  Denote by $\frcQ$ and $\frcR$ the subsets of $\Delta$ satisfying $\QFq = \PFq_{\frcQs}$, and $\RFq = \PFq_{\frcRs}$. 

\smallskip

The alternating sum of the idempotents is a sum over $2^{\#( \frcRs )}$ terms.  Since $\QFq \subsetneq \RFq$, we have $\frcQ \subsetneq  \frcR$, i.e., the complement $\frcQ' := \frcR \backslash \frcQ$ is nonempty.  For each subset $q \subset \frcQ$, we consider the subsets of $\frcR$ obtained from $q$ by adding a subset $q'$ of $\frcQ'$.  The convolution of $e_{\frcQs}$ with the sum over these $2^{\#( \frcQs')}$ subsets is, up to $\pm 1$:
\smallskip
$$
\aligned
\Big( \ 
{\underset 
{q' \subset \frcQs'}
\sum } 
(-1)^{\#(q \cup q')} \ e_{q \cup q'} \ \Big) \ \star \ e_{\frcQs} \ &= \ (-1)^{\#(q)} \ 
{\underset 
{q' \subset \frcQs'}
\sum } 
(-1)^{\#(q')} \ e_{q \cup q'} \ \star \ e_{\frcQs} \\
&= \ (-1)^{\#(q)} \ 
{\underset 
{q' \subset \frcQs'}
\sum } 
(-1)^{\#(q')} \ e_{q } \\
\ &= \ {\text{\rm{zero function}}} \, .
\endaligned
$$

\noindent{The} statement of the Lemma follows.
  
\end{proof}

\medskip

\subsection{Harish--Chandra cuspidal classes} \quad Define a {\it Harish-Chandra cuspidal class} to be an equivalence class ${\frcL}$ of pairs $(\LFq, \sigma )$, modulo conjugation by $\GFq$, consisting of a Levi subgroup $\LFq \subset \GFq$ and an irreducible cuspidal representation $\sigma$ of $\LFq$.  So, $(\LFq, \sigma ) \sim (\LFq ', \sigma' )$, if there exists $g \in G$ satisfying $\LFq \ = \ g^{-1} \LFq' g$, and $\sigma (x) = \sigma' (gxg^{-1})$ for all $ x \in  \LFq$.  We recall two parabolic subgroups $\PFq$ and $\PFq'$ are associative if they have Levi factors $\LFq$ and $\LFq'$ which are $\GFq$ conjugate. 

\medskip

A Harish-Chandra cuspidal class ${\frcL}$ defines up to conjugation by $\GFq$ a Levi subgroup $\LFq$, and thus an associativity class of parabolic subgroups, and possibly several irreducible cuspidal representations of $\LFq$.   A Harish-Chandra cuspidal class ${\frcL}$  determines a subcategory ${\mathcal R}_{\frcLs}$ of the category of representations ${\mathcal R}$ of $\GFq$, namely 

\begin{equation}
{\mathcal R}_{\frcLs} \ = \ \begin{cases}
\begin{tabular}{p{4.2in}}
Representations of $\GFq$ whose irreducible subrepresentations are equivalent to representations in the induced representation ${\text{\rm{Ind}}}^{\GFq}_{\LFq \UFq} (\sigma )$, where $(\LFq, \sigma ) \in {\frcL}$, and $\LFq \UFq$ is a parabolic subgroup containing the Levi subgroup $\LFq$.    \\
\end{tabular}
\end{cases}
\end{equation}

\medskip

 If ${\frcL}$ is a Harish-Chandra cuspidal class, take $(\LFq, \sigma ) \in {\frcL}$, and set   
\smallskip
\begin{equation}
\aligned
e_{\LFq ,{\frcLs}} \ :&= \ {\frac{1}{\# (\LFq )}} \
{\underset
  {{\text{\rm{$\begin{array} {c}
    {\kappa \in \widehat{\LFq}} \\
    {(\LFq, \kappa) \in {\frcLm}}
  \end{array}$}}}}
  \sum } \deg (\kappa ) \ \Theta_{\kappa} \ , 
\endaligned
\end{equation}
\smallskip
\noindent{the} sum over those $\kappa \in \widehat{\LFq}$ satisfying $(\LFq, \kappa) \in {\frcLm}$. For a parabolic subgroup $\PFq = \LFq \UFq$, set
\smallskip
\begin{equation}
\aligned
e_{\PFq ,{\frcLs}} \ :&= \ {\frac{1}{\# (\UFq)}} \ {\text{\rm{inflation to $\PFq$ of $e_{\LFq,{\frcLs}}$}}} \ ,
\endaligned
\end{equation}
\smallskip
\noindent{We} extend $e_{\LFq,{\frcLs}}$, $e_{\PFq,{\frcLs}}$, and $e_{\UFq}$  to functions on $\GFq$ by setting their values outside $\LFq$, $\PFq$, and $\UFq$ respectively to be zero.  In $\GFq$, it is obvious 
\smallskip
$$
e_{P,{\frcLs}} \ = \ e_{L,{\frcLs}} \star e_{\UFq} \ .
$$ 

\smallskip

To any parabolic subgroup $\QFq$, we wish to attach  an idempotent $e_{Q,{\frcLs}}$.  We do this as follows:  Fix ${(\LFq, \kappa) \in {\frcLm}}$. Let $\QFq = \MFq \VFq$ be the Levi decomposition of $\QFq$.    We consider whether or not there is a conjugate of $\LFq$ contained in $\MFq$.

\begin{itemize}
\item[$\bullet$] When a conjugate $\LFq' = g\LFq g^{-1}$ of $\LFq$ is contained in $\MFq$, we take $\PFq' = \LFq' \UFq'$ to be a parabolic subgroup (of $\GFq$) which is contained in $\QFq$.  Then $\LFq' \, ( \MFq \cap \UFq')$ is a parabolic subgroup of $\MFq$.  Set  

\begin{equation}
e_{M,{\frcLs}} \ := \ 
{\frac{1}{\# (\MFq)}} \quad 
{\underset 
{\tau \in \widehat{\text{\rm{$\MFq$}}} } 
\sum'}
\deg (\tau ) \ \Theta_{\tau} \ ,  
\end{equation}

\noindent{where} the sum \ $\sum'$ \ is over the irreducible representations $\tau$ of $\MFq$ for which the invariants $\tau^{\MFq \cap \UFq'}$ contains an irreducible representation $\sigma'$ of $\LFq'$ with $(\LFq',{\sigma}') \in {\frcL}$. \ We also set  
\begin{equation}
\aligned
e_{\QFq,{\frcLs}} \ :&= \ \ {\frac{1}{\# (\VFq)}} \ \ {\text{\rm{inflation of $e_{\MFq,{\frcLs}}$ to $\QFq$ }}} \ = \ e_{\MFq,{\frcLs}} \star e_{\VFq} \ .
\endaligned
\end{equation}

\bigskip

Let

\begin{equation}
{\mathcal R}_{\MFq , \frcLs} \ = \ \begin{cases}
\begin{tabular}{p{4.3in}}
 the subcategory of representations $\pi$ of $\MFq$ so that if $\tau$ is a irreducible subrepresentation of $\pi$, and  $\sigma'$ is an irreducible $\LFq'$-subrepresentation of the invariants $\tau^{\MFq \cap \UFq'}$, then $(\LFq',{\sigma}') \in {\frcL}$. \\
\end{tabular}
\end{cases}
\end{equation}

\noindent{In particular}, ${\mathcal R}_{\GFq, \frcLs} = {\mathcal R}_{\frcLs}$.  The idempotent $e_{M, {\frcLs}}$ is characterized as

\smallskip

\begin{equation}
  \pi ( e_{\MFq, {\frcLs}} ) \ = \
  \begin{cases}
    \ \, \myId_{V_{\pi}} &{\text{\rm{for any representation $(\pi , V_{\pi})$ of }}} \\
    \ &{\text{\rm{$\MFq$ in the subcategory ${\mathcal R}_{\MFq, \frcLs}$,}}} \\
\ &\ \\
\ \, 0_{V_{\pi}} &{\text{\rm{for any irreducible representation }}} \\
    \ &{\text{\rm{$(\pi , V_{\pi})$ of $\MFq$  not in ${\mathcal R}_{\MFq, \frcLs}$.}}}
  \end{cases}
\end{equation}

\bigskip

\item[$\bullet$] When the Levi $\MFq$ does not contain any $\GFq$-conjugate of $\LFq$, we set 
\begin{equation}
e_{Q,{\frcLs}} \ := \ {\text{\rm{zero function}}} \ .
\end{equation}

\end{itemize}

\medskip

\begin{prop} Suppose ${\frcL}$ is a fixed Harish-Chandra cuspidal class.  Let $\QFq$ be a parabolic subgroup, and $\QFq = \MFq \VFq$ a Levi decomposition.  Then  
\begin{equation}\label{G-equality}
e_{\GFq,{\frcLs}} \star e_{\VFq} \ = \ e_{\QFq,{\frcLs}} \ .
\end{equation}
\end{prop}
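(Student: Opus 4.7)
\medskip
\noindent\textbf{Plan of proof.}\quad The plan is to verify the identity by testing it in each irreducible representation of $\GFq$, via the Fourier isomorphism $C(\GFq) \xrightarrow{\sim} \bigoplus_{\pi} \End(V_{\pi})$, $f \mapsto (\pi(f))_{\pi}$. Since $\pi(f\star g)=\pi(f)\pi(g)$ and $e_{\QFq,{\frcLs}} = e_{\MFq,{\frcLs}} \star e_{\VFq}$ by definition, the task reduces to checking
\[
\pi(e_{\GFq,{\frcLs}}) \, \pi(e_{\VFq}) \ = \ \pi(e_{\MFq,{\frcLs}}) \, \pi(e_{\VFq})
\]
for every irreducible $(\pi, V_{\pi})$. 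In this reformulation, $\pi(e_{\VFq})$ is the $\MFq$-equivariant projector onto the Jacquet space $V_{\pi}^{\VFq}$ (an $\MFq$-submodule since $\MFq$ normalizes $\VFq$); $\pi(e_{\GFq,{\frcLs}})$ is the scalar $\epsilon_{\pi} \in \{0,1\}$ indicating whether $\pi \in \mathcal{R}_{\frcLs}$ (by centrality of $e_{\GFq,{\frcLs}}$ in $\bC[\GFq]$); and $\pi(e_{\MFq,{\frcLs}})$ is the $\MFq$-central projector onto the $\mathcal{R}_{\MFq,\frcLs}$-isotypic summand of $V_{\pi}|_{\MFq}$.

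\medskip

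Accordingly everything reduces to the following cuspidal-support comparison: for every irreducible $\MFq$-subrepresentation $\tau \subset V_{\pi}^{\VFq}$,
\[
\tau \in \mathcal{R}_{\MFq,\frcLs} \quad \Longleftrightarrow \quad \pi \in \mathcal{R}_{\frcLs} \, ,
\]
so both operators restrict to $\epsilon_{\pi}\cdot\myId$ on $V_{\pi}^{\VFq}$. The direction $(\Leftarrow)$ is the Mackey (geometric) lemma for Harish-Chandra induction in finite reductive groups: if $\pi \hookrightarrow \text{Ind}^{\GFq}_{\LFq\UFq_{0}}(\sigma)$ with $(\LFq,\sigma)\in\frcL$, then by exactness of $(-)^{\VFq}$ in characteristic zero, $V_{\pi}^{\VFq}$ embeds into $(\text{Ind}^{\GFq}_{\LFq\UFq_{0}}(\sigma))^{\VFq}$, and this space decomposes, via double cosets $w \in \MFq\backslash\GFq/\LFq\UFq_{0}$, as a direct sum of pieces $\text{Ind}^{\MFq}_{\MFq\cap{}^{w}(\LFq\UFq_{0})}({}^{w}\sigma^{\MFq\cap{}^{w}\UFq_{0}})$. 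Cuspidality of $\sigma$ kills every double coset with ${}^{w}\LFq \not\subset \MFq$ (the Jacquet of a cuspidal along a proper sublevi vanishes), and each surviving summand is parabolically induced in $\MFq$ from the cuspidal pair $({}^{w}\LFq,{}^{w}\sigma) \in \frcL$; hence all of its $\MFq$-constituents lie in $\mathcal{R}_{\MFq,\frcLs}$.

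\medskip

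The direction $(\Rightarrow)$ is routine: if $\tau \in \mathcal{R}_{\MFq,\frcLs}$ embeds into $V_{\pi}^{\VFq}$ then Frobenius reciprocity gives $\pi \hookrightarrow \text{Ind}^{\GFq}_{\QFq}(\tau)$; since $\tau$ itself embeds in $\text{Ind}^{\MFq}_{\LFq'(\MFq\cap\UFq')}(\sigma')$ for some $(\LFq',\sigma')\in\frcL$ with $\PFq'=\LFq'\UFq'\subset\QFq$, induction in stages places $\pi$ inside $\text{Ind}^{\GFq}_{\PFq'}(\sigma')$, so $\pi \in \mathcal{R}_{\frcLs}$. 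The main obstacle in the whole argument is the clean invocation of the Mackey decomposition in $(\Leftarrow)$, and specifically the verification that cuspidality of $\sigma$ suppresses the double cosets with ${}^{w}\LFq \not\subset \MFq$; this is standard in the Harish-Chandra theory of cusp forms over finite reductive groups, and together with the centrality of the $\GFq$- and $\MFq$-block idempotents provides essentially the only substantive input.
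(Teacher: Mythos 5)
Your proof is correct and follows essentially the same route as the paper's: reduce the function identity to the operator identity $\rho(e_{\GFq,\frcLs})\rho(e_{\VFq})=\rho(e_{\QFq,\frcLs})$ in each irreducible $\rho$ via Fourier inversion on the finite group, then use centrality of $e_{\GFq,\frcLs}$ and the identification of $\rho(e_{\VFq})$ as the projection onto the $\VFq$-invariants. The only difference is that you spell out, via the Mackey decomposition and cuspidality, the comparison of cuspidal supports between $\pi$ and the $\MFq$-constituents of $V_{\pi}^{\VFq}$, which the paper leaves implicit as part of the Harish-Chandra philosophy of cusp forms.
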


\begin{proof}  \quad Suppose $\rho$ is any irreducible representation of $\GFq$.  We claim:

\begin{equation}\label{G-operator}
\rho (e_{\GFq,{\frcLs}} \star e_{\VFq} ) \ = \ \rho (e_{\QFq,{\frcLs}} ) \ .
\end{equation}

\medskip

\noindent{Assuming} the validity of the claim, we have for all $g \in \GFq$ that  $\rho ( \delta_{g^{-1}} \star e_{\GFq,{\frcLs}} \star e_{\VFq} ) \ = \ \rho ( \delta_{g^{-1}} \star e_{\QFq,{\frcLs}} )$, so 

\begin{equation}\label{G-trace}
\trace( \rho ( \delta_{g^{-1}} \star e_{\GFq,{\frcLs}} \star e_{\VFq} ) ) \ = \ \trace( \rho ( \delta_{g^{-1}}  \star e_{\QFq,{\frcLs}} )  ) \, . 
\end{equation}

\medskip

\noindent{Since} we can recover any function $f$ on $\GFq$  as 
\begin{equation}
\aligned
f(x) \ &= \ (\delta_{x^{-1}} \star f) (1) \ = \ {\frac{1}{\# (\GFq)}} \ {\underset { \rho \in \widehat{\GFq} } \sum } \ \deg (\rho ) \, \trace ( \, \rho (\delta_{x^{-1}} \star f) \, ) \ ,
\endaligned
\end{equation}

\noindent{we} see \eqref{G-trace}, hence \eqref{G-operator} implies the conclusion \eqref{G-equality}.

\medskip

To establish the claim, we note that since $e_{\GFq,{\frcLs}}$ is central in the group algebra ${\bC}(\GFq )$, it commutes (under convolution) with $e_{\VFq }$, and so the  operators $\rho (e_{\GFq ,{\frcLs}})$ and $\rho (e_{\VFq })$ commute.  Also 
$$
\rho (e_{\GFq ,{\frcLs}} ) \ = \ 
\begin{cases}
\ \, {\text{\rm{Id}}}_{V_{\rho}} &{\text{\rm{when $\rho \in {\mathcal R}_{\frcLs}$}}} \\
\ \\
\ \, 0_{\VFq_{\rho}} &{\text{\rm{otherwise}}} \, .
\end{cases}
$$  

\smallskip

\noindent{The} operator $\rho (e_{\VFq})$ projects to the $\VFq$-invariants (which we note is a representation of $\MFq$).   Therefore $\rho ( e_{\VFq} \star e_{G,{\frcLs}} )$ is the projection to the $\VFq$-invariants when $\rho \in {\mathcal R}_{\frcLs}$, and is the zero operator otherwise.  Since $e_{\QFq ,{\frcLs}} = e_{\MFq ,{\frcLs}} \star e_{\VFq }$, we see the operator $\rho (e_{\QFq ,{\frcLs}})$ projects to ${\mathcal R}_{\MFq ,{\frcLs}}$ when $\rho \in {\mathcal R}_{\frcLs}$ and is zero otherwise.   Thus, the claim follows.

\end{proof}

\bigskip

We fix a Borel subgroup $\BFq \subset \GFq$, and consider the following alternating sum function over $\BFq$-standard parabolic subgroups{\,}:

\begin{equation}
{\underset 
{\PFq \supset \BFq}
\sum }
(-1)^{{\text{\rm{rank}}}(P)} e_{\PFq ,{\frcLs}} \, .
\end{equation}

\smallskip

\noindent{The} next Corollary is the finite field result we shall use to show the Bruhat--Tits building version of the above alternating sum belongs to the Bernstein center.

\begin{cor}\label{finitefieldcora}  Suppose $\QFq \supset \BFq$ is proper standard parabolic subgroup, i.e, $\VFq = {\text{\rm{rad}}}(\QFq ) \neq \{ 1 \}$.  Then, 
$$
\Big( \ 
{\underset 
{\PFq \supset \BFq}
\sum }
(-1)^{{\text{\rm{rank}}}(\PFq)} e_{\PFq,{\frcLs}} \ \Big) \star e_{\VFq} \ = \ {\text{\rm{zero function}}} \, .
$$
\end{cor}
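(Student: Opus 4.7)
My plan is to use Proposition \eqref{G-equality} to factor the $\frcL$-dependence out of each term of the alternating sum, and then invoke the preceding Lemma to conclude.

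First, I would apply Proposition \eqref{G-equality} with the role of $\QFq$ played by each standard parabolic $\PFq \supset \BFq$ in the summation. Since $\PFq = \LFq_{\PFq} \, \text{\rm{rad}}(\PFq)$ is a Levi decomposition, the Proposition yields
$$
e_{\PFq, \frcLs} \ = \ e_{\GFq, \frcLs} \star e_{\text{\rm{rad}}(\PFq)}.
$$
Substituting this into every summand and pulling the $\PFq$-independent idempotent $e_{\GFq, \frcLs}$ out to the left of the summation, the expression to be shown zero becomes
$$
e_{\GFq, \frcLs} \star \Big( {\underset{\PFq \supset \BFq} \sum} (-1)^{\text{\rm{rank}}(\PFq)} \, e_{\text{\rm{rad}}(\PFq)} \Big) \star e_{\VFq}.
$$

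Next I would invoke the preceding Lemma with $\RFq = \GFq$ and the given $\QFq$. The hypothesis $\RFq \supsetneq \QFq$ is met because $\QFq$ is a proper standard parabolic, so that Lemma says the parenthesized sum, convolved on the right with $e_{\VFq}$, is the zero function on $\GFq$. Convolving on the left with $e_{\GFq, \frcLs}$ then produces zero, which is the desired conclusion.

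I do not anticipate any real obstacle: the two previously-established results combine almost tautologically. The only point worth double-checking is that Proposition \eqref{G-equality} is legitimately applicable with the summand parabolic $\PFq$ playing the role of the ``$\QFq$'' in its statement, which is immediate since that Proposition was proved for an \emph{arbitrary} parabolic subgroup of $\GFq$.
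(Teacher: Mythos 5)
Your proposal is correct and is essentially identical to the paper's own proof: the paper likewise writes $e_{\PFq,\frcLs} = e_{\GFq,\frcLs} \star e_{{\text{\rm{rad}}}(\PFq)}$ via the Proposition, pulls $e_{\GFq,\frcLs}$ out of the alternating sum, and applies the Lemma (in the case $\RFq = \GFq$) to kill the remaining convolution. No gaps.
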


\begin{proof}  For a parabolic $\PFq \supset \BFq$, let ${\text{\rm{rad}}}(\PFq )$ denote its radical.   By the Proposition, $e_{\PFq ,{\frcLs}} = e_{\GFq ,{\frcLs}} \star e_{{\text{\rm{rad}}}(\PFq )}$.  Then,
\smallskip
$$
\aligned  
    \Big( \ 
{\underset 
{\PFq \supset \BFq}
\sum }
\ (-1)^{{\text{\rm{rank}}}(\PFq)} e_{\PFq,{\frcLs}} \ \Big) &\star e_{\VFq} \ = \ 
    \Big( \ 
{\underset 
{\PFq \supset \BFq}
\sum }
\ (-1)^{{\text{\rm{rank}}}(\PFq )} e_{\GFq ,{\frcLs}} \star e_{{\text{\rm{rad}}}(\PFq)} \ \Big) \star e_{\VFq } \\
&= \ 
   e_{\GFq,{\frcLs}} \ \star \ \Big( \ 
{\underset 
{\PFq \supset \BFq}
\sum } 
(-1)^{{\text{\rm{rank}}}(\PFq )} \ e_{{\text{\rm{rad}}}(\PFq )} \star e_{\VFq} \ \Big) \\
&= \ 
   e_{\GFq,{\frcLs}} \ \star \ 0 \ = \ 0 \, .\\
\endaligned
$$

\end{proof}

\noindent{More} generally, we have

\begin{cor}\label{finitefieldcorb}  Suppose $\RFq \supsetneq \QFq$ are two $\BFq$-standard parabolic subgroups, and $\VFq = {\text{\rm{rad}}}(\QFq)$.  Then, 
$$
\Big( \ 
{\underset 
{\RFq \supset \PFq \supset \BFq}
\sum }
(-1)^{{\text{\rm{rank}}}(\PFq )} e_{\PFq,{\frcLs}} \ \Big) \star e_{\VFq} \ = \ {\text{\rm{zero function}}} \, .
$$
\end{cor}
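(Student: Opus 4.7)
The plan is to mimic the proof of Corollary \ref{finitefieldcora} verbatim, since the present Corollary is just its extension obtained by using the more general Lemma (the one stated for arbitrary $\RFq \supsetneq \QFq$) in place of the special case $\RFq = \GFq$. All the non-trivial combinatorics has already been done inside that Lemma, so the task is to reduce the block idempotents $e_{\PFq,\frcLs}$ to unipotent-radical idempotents $e_{\text{rad}(\PFq)}$ and then invoke the Lemma.

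First, I would apply the Proposition (identity \eqref{G-equality}): for every $\BFq$-standard parabolic $\PFq$ with $\RFq \supset \PFq \supset \BFq$ we have
\[
e_{\PFq,\frcLs} \ = \ e_{\GFq,\frcLs} \star e_{\text{rad}(\PFq)}.
\]
Next, I would use that $e_{\GFq,\frcLs}$ is a $\bC$-linear combination of characters of irreducible representations of $\GFq$, hence a central element of the group algebra $\bC[\GFq]$. Consequently it commutes under convolution with every $e_{\text{rad}(\PFq)}$ and with $e_{\VFq}$, so it can be factored out of the finite sum:
\[
\Big(\sum_{\RFq \supset \PFq \supset \BFq} (-1)^{\text{rank}(\PFq)}\, e_{\PFq,\frcLs}\Big) \star e_{\VFq}
\ = \
e_{\GFq,\frcLs} \star \Big(\sum_{\RFq \supset \PFq \supset \BFq} (-1)^{\text{rank}(\PFq)}\, e_{\text{rad}(\PFq)}\Big) \star e_{\VFq}.
\]

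Finally, I would apply the earlier Lemma (with the given pair $\RFq \supsetneq \QFq$ and $\VFq = \text{rad}(\QFq)$), which asserts that the inner convolution on the right-hand side vanishes. Convolving the zero function with $e_{\GFq,\frcLs}$ on the left still yields the zero function, establishing the Corollary.

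I do not anticipate any real obstacle: the combinatorial inclusion-exclusion content lives entirely in the Lemma, and the representation-theoretic content (that cuspidal-block idempotents split as a central factor times a radical idempotent) lives entirely in the Proposition. The Corollary is a formal bookkeeping combination of the two, with the centrality of $e_{\GFq,\frcLs}$ being the only (entirely routine) ingredient that needs to be mentioned explicitly.
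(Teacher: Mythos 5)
Your proposal is correct and is essentially identical to the paper's argument: the paper likewise writes $e_{\PFq,\frcLs} = e_{\GFq,\frcLs}\star e_{{\text{\rm{rad}}}(\PFq)}$ via the Proposition, pulls the central factor $e_{\GFq,\frcLs}$ out of the alternating sum, and invokes the earlier Lemma for the pair $\RFq \supsetneq \QFq$. Nothing is missing.
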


\begin{proof}  The proof is similar to the proof of Corollary \eqref{finitefieldcora}.  We use the property 
$$
\Big( \ 
{\underset 
{\RFq \supset \PFq \supset \BFq}
\sum } 
(-1)^{{\text{\rm{rank}}}(\PFq)} \ e_{{\text{\rm{rad}}}(\PFq)} \ \Big) \ \star \ e_{\VFq} \ = \ {\text{\rm{zero function}}} \, .
$$ 
\end{proof}

 
\vskip 0.70in 
 

\section{The projector for the Iwahori--Bernstein component}\label{iwahoribernstein}

\medskip

We recall our notation:  $\mk$ is a non--archimedean local field, $\Gk = \mG ( \mk )$ is the group of $\mk$-rational points of a split connected quasisimple group $\mG$ defined over $\mk$.   Let $\ScptB = \ScptB (\Gk )$ denote the Bruhat--Tits building of $\Gk$.  We fix a chamber $C_{0} \subset \ScptB$, and let $\myht_{C_{0}}$ be the Bruhat height function on chambers.   For $m \in \bN$, we define
a `ball and shell of radius m' as:

\begin{equation}\label{ballshellradiusm}
\aligned
\myBall (C_{0},m) \ :&= \ \{ \ {\text{\rm{chamber $D$}}} \ | \ \myht_{C_{0}}(D) \, \le \, m \ \} \ , \ \\ 
{\text{\rm{Shell}}}(C_{0},m) \ :&= \ \big\{ {\text{\rm{chambers}}} \ D \ | \ {\text{\rm{ht}}}_{C_0}(D) \, = \, m \ \big\} \ .
\endaligned
\end{equation}

\smallskip
\noindent{Clearly}, $\myBall (C_{0},m)$ is a convex simplicial subcomplex of $\ScptB$.

\medskip

\subsection{An equivariant system of idempotents} \quad We define a system of idempotents indexed by the facets of $\ScptB = \ScptB (\Gk )$ as follows:

\smallskip

\begin{itemize}
\item[$\bullet$]  For a chamber $F \subset \ScptB$, we take $e_{F}$ to be the idempotent of the trivial representation of the Iwahori subgroup $\Gk_{F}$.  

\smallskip

\item[$\bullet$]  For a facet $E \subset \ScptB$, let $F$ be a chamber whose closure contains $E$, so $\Gk_{E} \supset \Gk_{F}$,  $\Gk_{E}^{+} \subset \Gk_{F}^{+}$, and $\BFq = \Gk_{F}/\Gk^{+}_{E}$ is canonically a Borel subgroup of $\GFq = \Gk_{E}/\Gk_{E}^{+}$.  The pair $\frcL = (\BFq , 1_{\BFq})$ consisting of the trivial representation of the Borel subgroup $\BFq$ of $\GFq$ is a Harish-Chandra cuspidal class for $\GFq$.  Let
\begin{equation}\label{iwahoriidempotent}
    {\text{\rm{ $e_{E} \ = \ $inflation of the idempotent \ $e_{\BFq,{\frcLs}}$ \ of  $\GFq$ to $\Gk_{E}$.}}}  
\end{equation}
  
\end{itemize}

\smallskip

\noindent{The} system of idempotents $e_{F}$ for the Iwahori subgroups, i.e., chambers $F$, is clearly $\Gk$-equivariant, and therefore the collection of canonically attached idempotents $e_{E}$ for arbitrary facets is also $\Gk$-equivariant.

\bigskip



\subsection{Euler--Poincar{\'{e}} presentation of a distribution} \quad The key result we show is{\,}:

\begin{thm}\label{mainiwahori} For a facet $F$ of $\ScptB (\Gk)$, let $e_{F}$ be the idempotent defined in \eqref{iwahoriidempotent}.  The $\Gk$-invariant distribution defined as the infinite alternating sum
  
$$
\Big( \ 
{\underset 
{E  \,  \subset  \, \, \ScptB (G)}
\sum } 
(-1)^{{\text{\rm{dim}}}(E)} \ e_{E} \ \Big) \ 
$$
\noindent{over} the facets is an essentially compact distribution on $\Gk$, i.e., in the Bernstein center.  

\end{thm}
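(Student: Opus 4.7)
The plan is to verify the essentially-compact criterion recalled at the close of Section \ref{apartment}: for every open compact subgroup $J \subset \Gk$, show that $P \star e_J$ has compact support. Writing $P_m = \sum_{E \subset \myBall(C_0, m)} (-1)^{\dim E} e_E$ for the truncation to the Bruhat ball of radius $m$ about $C_0$, it suffices to produce $N = N(J)$ so that $P_m \star e_J = P_N \star e_J$ for all $m \ge N$. The common value will be $\Gk$-invariant on the left (since $P$ is), $J$-invariant on the right, and supported inside the compact set obtained by thickening $\mathrm{supp}(P_N)$ by $J$, giving essential compactness at once.

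For each $m$ the difference $P_{m+1} - P_m$ decomposes as
$$
P_{m+1} \;-\; P_m \;=\; \sum_{D \,\in\, \text{Shell}(C_0,\, m+1)} \text{Con}(D), \qquad \text{Con}(D) \;=\; \sum_{E \,\in\, \mathcal{C}(D)} (-1)^{\dim E}\, e_E,
$$
where $\mathcal{C}(D)$ is the set of facets of $D$ not already contained in $\myBall(C_0, m)$. By Lemma \ref{simplexlemma} applied with $\mathcal{F} = p(D)$, the elements of $\mathcal{C}(D)$ are parametrized by subsets $Y \subseteq c(D)$ via $Y \mapsto F(Y) = \bigcap_{E \in Y} E$, and admit a unique minimal member $D_+ = F(c(D))$. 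The containment chain $\Gk_{D_+} \supset \Gk_{F(Y)} \supset \Gk_{F(Y)}^+ \supset \Gk_{D_+}^+$ identifies the quotients $\Gk_{F(Y)}/\Gk_{D_+}^+$ with the $\BFq$-standard parabolic subgroups of $\GFq := \Gk_{D_+}/\Gk_{D_+}^+$, where $\BFq := \Gk_D/\Gk_{D_+}^+$ is the Borel. By construction \eqref{iwahoriidempotent} each $e_{F(Y)}$ is the inflation to $\Gk_{D_+}$ of the finite-field idempotent $e_{\PFq, \frcL}$ of Section \ref{harishchandra} attached to the Iwahori class $\frcL = (\BFq, 1_{\BFq})$, and an elementary count yields $(-1)^{\dim F(Y)} = (-1)^{\dim D}(-1)^{\text{rank}(\PFq_Y)}$. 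Hence $\text{Con}(D)$ equals, up to a global sign independent of $D$, the inflation of the alternating sum $\sum_{\PFq \supset \BFq} (-1)^{\text{rank}(\PFq)} e_{\PFq, \frcL}$ appearing in Corollary \ref{finitefieldcora}.

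The argument then reduces to showing that for $m$ sufficiently large, the image $\overline J$ of $J \cap \Gk_{D_+}$ in $\GFq$ contains the unipotent radical of some proper $\BFq$-standard parabolic $\QFq \subset \GFq$: Corollary \ref{finitefieldcora} immediately forces $\text{Con}(D) \star e_J = 0$. The geometric input comes from Section~2.4: as $D$ recedes from $C_0$ inside a chamber-based sector $S(C_0, \Phi^+)$, the heights $\myht^{\pm \gamma}_{C_0}(D)$ grow in every direction $\gamma \in \Phi(C_0, D)$, forcing the affine-root subgroups $\Xk_\psi$ with $\mygrad(\psi) = \gamma$ into $\Gk_{D_+}^+$ while their opposites $\Xk_{-\psi}$ are squeezed out; after passage to $\GFq$, the positive-root content of $\overline J$ exhausts a non-trivial unipotent radical. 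Proposition \ref{keypermissible} guarantees that only finitely many chambers $D$ share any given child-face pattern, so a uniform $N(J)$ can be extracted.

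The main obstacle is making this last geometric claim effective: given $J$, exhibit an explicit $N$ beyond which every chamber $D$ with $\myht_{C_0}(D) \ge N$ has the image of $J$ in $\Gk_{D_+}/\Gk_{D_+}^+$ absorbing a proper unipotent radical. One expects to reduce to $J = \Gk_{x_0, e}$ for a hyperspecial $x_0 \in \ScptA$ and integer $e \ge 0$, use Lemma \ref{bruhatheight} to convert Bruhat-height growth into lower bounds on the values $\psi(D_+)$, and then track the Moy--Prasad/Iwahori factorization along the sector to pinpoint the root directions forced into $\Gk_{D_+}^+$. Once this quantitative control is in hand, the stabilization $P_m \star e_J = P_N \star e_J$ and hence the essential compactness of $P$ follow formally.
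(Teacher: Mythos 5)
Your proposal follows essentially the same route as the paper: the same shell-by-shell decomposition with ${\mathcal C}(D)={\mathcal F}_{+}(D)$ matched to the $\BFq$-standard parabolics of $\Gk_{D_{+}}/\Gk^{+}_{D_{+}}$, the same appeal to Corollary \eqref{finitefieldcora}, and the same geometric inputs (sector heights and Proposition \eqref{keypermissible}). The quantitative claim you defer --- that for $\myht_{C_{0}}(D)$ sufficiently large the subgroup $J=\Gk_{x_{0},\rho}$ absorbs a lift $V_{y}$ of a nontrivial unipotent radical --- is precisely the content of Theorem \eqref{dplus}, proved there by the case analysis over the sets ${\frcR}_{\{I,k\}}$ and ${\frcR}^{K}_{I,(\rho+2)}$ together with a finite-subcover argument over the apartments containing $C_{0}$.
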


To prove Theorem \eqref{mainiwahori}, it is enough to show for any open compact subgroup $J$, and $e_{J} = {\frac{1}{\meas (J)}} 1_{J}$, the convolution

$$
{\underset 
{E  \,  \subset  \, \, \ScptB (G)}
\sum } 
(-1)^{{\text{\rm{dim}}}(E)} \ e_{E} \ \star e_{J}
$$

\noindent{has} compact support.  Fix a chamber $C_{0} \subset \ScptB$, and let 
$\myht_{C_{0}}$ be the height function, with respect to $C_{0}$, on the chambers of $\ScptB$.  When $D$ is a chamber, we use $\myht_{C_{0}}$ to partition the faces of $D$ into the two subsets of parent $p(D)$ faces and child faces $c(D)$ as in
 \eqref{chamberinout}.  We note $c(D)$ is always non-empty, and when $D \neq C_{0}$, i.e., $\myht_{C_{0}} (D) > 0$, then $p(D)$ is non-empty too.  In this latter situation ($\myht_{C_{0}} (D) > 0$), the intersection 

\begin{equation}\label{min-facet}
D_{+} \ := \ {\underset {F \in c(D)} \bigcap } \ F \qquad 
\end{equation}

\noindent{is} a facet of $D$. \ Set
\begin{equation}\label{min-facet-set}
  {\mathcal F}_{+}(D) \ := \ {\text{\rm{the set of facets $E$ of $D$ which contain $D_{+}$}}} \, .
\end{equation}

\medskip

\begin{thm}\label{dplus}  Fix a chamber $C_{0} \subset \ScptB$.  Suppose $J$ is an open compact subgroup of $G$.  If $D$ is a chamber with $\myht_{C_{0}}(D)$ sufficiently large (depending on $J$), then the convolution
  
\begin{equation}\label{dplusconvolution}
{\underset 
{E  \,  \in  \, {\mathcal F}_{+}(D)}
\sum } 
(-1)^{{\text{\rm{dim}}}(E)} \ e_{E} \ \star e_{J} \ = \ {\text{\rm{ zero function}}} \, . 
\end{equation}

\end{thm}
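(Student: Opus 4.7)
The plan is to descend the convolution to the finite reductive quotient at $D_+$, apply Corollary \eqref{finitefieldcorb}, and exploit the Bruhat height of $D$ to supply, from $J$, enough root subgroups for the finite-group identity. I would first descend to $\GFq^{D_+} := \Gk_{D_+}/\Gk^+_{D_+}$: every $E \in \mathcal{F}_+(D)$ satisfies $D_+ \subseteq \overline{E}$, so $\Gk^+_{D_+} \subseteq \Gk^+_E \subseteq \Gk_E \subseteq \Gk_{D_+}$, and each $e_E$, being $\Gk^+_E$-bi-invariant, is in particular $\Gk^+_{D_+}$-bi-invariant and descends to a function $\widetilde{e}_E$ on $\GFq^{D_+}$. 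The images $\QFq_E := \Gk_E \Gk^+_{D_+}/\Gk^+_{D_+}$ are then exactly the $\BFq$-standard parabolics of $\GFq^{D_+}$, where $\BFq := \Gk_D / \Gk^+_{D_+}$ is the Borel corresponding to $D$; a direct comparison with \eqref{iwahoriidempotent} and Section \eqref{harishchandra} identifies $\widetilde{e}_E$ with the Harish--Chandra idempotent $e_{\QFq_E, \frcL}$ for the Iwahori block $\frcL = (\BFq, 1_{\BFq})$.

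Setting $K := J \cap \Gk_{D_+}$ and letting $\overline{K}$ denote its image in $\GFq^{D_+}$, a routine coset decomposition of $xJ \cap \Gk_{D_+}$ together with the $\Gk^+_{D_+}$-bi-invariance of $\sum_E (-1)^{\dim E} e_E$ shows that the convolution $\bigl(\sum_E (-1)^{\dim E} e_E\bigr) \star e_J$ vanishes identically on $\Gk$ if and only if $\bigl(\sum_E (-1)^{\dim E} \widetilde{e}_E\bigr) \star e_{\overline{K}}$ vanishes on $\GFq^{D_+}$. Suppose $\overline{K}$ contains the radical of some proper standard parabolic $\QFq_{E_0}$ (that is, with $D_+ \subsetneq E_0$); then $e_{\text{rad}(\QFq_{E_0})} \star e_{\overline{K}} = e_{\overline{K}}$ (subgroup inclusion), and Corollary \eqref{finitefieldcorb} applied to $\RFq = \GFq^{D_+}$ and $\QFq = \QFq_{E_0}$ gives $\bigl(\sum \pm \widetilde{e}_E\bigr) \star e_{\text{rad}(\QFq_{E_0})} = 0$; convolving on the right with $e_{\overline{K}}$ supplies the desired vanishing. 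Since $\text{rad}(\QFq_{E_0}) \subseteq \text{rad}(\BFq)$, it is enough to prove $\overline{K} \supseteq \text{rad}(\BFq)$ (the degenerate case $\BFq = \GFq^{D_+}$ occurs only for $D = C_0$).

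What remains, and what I expect to be the main obstacle, is this geometric inclusion. By openness, $J$ contains some Moy--Prasad subgroup $\Gk_{x_0, N}$ with $x_0 \in C_0$, so every affine root subgroup $\Xk_\psi$ with $\psi(x_0) \geq N$ lies in $J$. For a positive root $\alpha$ of $\BFq$ and $y$ in the interior of $D_+$, the unique affine root $\psi_\alpha := \alpha - \alpha(y)$ vanishing on $D_+$ with gradient $\alpha$ satisfies $\psi_\alpha(x_0) = -\alpha(y - x_0)$. Combining \eqref{outward-inward}, \eqref{chamberinout}, and Proposition \eqref{childprop}, the faces in $c(D)$ are outward-oriented from $C_0$, so $D$ lies on the inward side of every affine hyperplane through $D_+$ while $y - x_0$ points outward; consequently every positive root of $\BFq$ evaluates strictly negatively on $y - x_0$ with magnitude at least linear in $\myht_{C_0}(D)$. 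Once this height exceeds a constant depending only on $J$ and the finite root system $\Phi$, we obtain $\psi_\alpha(x_0) \geq N$ for every positive root $\alpha$ of $\BFq$; all positive-root subgroups of $\BFq$ then arise from elements of $J \cap \Gk_{D_+}$, and $\overline{K} \supseteq \text{rad}(\BFq)$ as required. The delicate point is uniformity in $D$: the Borel $\BFq$ itself varies with the chamber, and one must confirm that its positive system is always anti-aligned with the outward direction $y - x_0$ so that a single height threshold, depending only on $J$ and $\Phi$, handles every chamber $D$ of sufficiently large Bruhat height.
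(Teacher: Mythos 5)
Your reduction to the finite reductive quotient $\Gk_{D_+}/\Gk^{+}_{D_+}$ and the identification of the $e_E$, $E \in {\mathcal F}_{+}(D)$, with the Harish--Chandra idempotents of the standard parabolics containing $\BFq = \Gk_{D}/\Gk^{+}_{D_+}$ is exactly the paper's strategy, and the first half of your geometric argument (for chambers deep inside the sector $S(C_0,\Phi^{+})$) matches the paper's treatment of the set $\frcR_{(\rho+2)}$ via Lemma \eqref{simplerootestimatea}. But the key claim on which your proof rests --- that every positive root of $\BFq$ evaluates on $y-x_0$ with magnitude at least linear in $\myht_{C_{0}}(D)$, so that large Bruhat height forces $\overline{K}\supseteq \text{rad}(\BFq)$ --- is false. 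By Lemma \eqref{bruhatheight}, $\myht_{C_{0}}(D)$ is the \emph{sum} of the directional counts $\myht^{\pm\gamma}_{C_{0}}(D)$ over root pairs, and this sum can be arbitrarily large while individual summands stay bounded: a chamber receding to infinity along a wall of the sector has $\myht^{\pm\alpha}_{C_{0}}(D)$ bounded (even zero) for the simple roots $\alpha$ vanishing on that wall. For such $D$, if an affine root $\psi$ vanishing on $D_{+}$ has $\mygrad(\psi)=\alpha$, then $-\psi(x_0)$ stays small and $\Xk_{-\psi}\not\subset \Gk_{x_0,\rho}$, so $J$ does \emph{not} supply the whole of $\text{rad}(\BFq)$. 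This is precisely why the paper's note after \eqref{recessgroupsix} records that $\frcR_{\{I,k\}}$ is infinite whenever $I\neq\emptyset$: there are infinitely many chambers of unbounded height for which your inclusion fails. Your closing remark flags ``uniformity of the threshold'' as the delicate point, but the actual obstruction is that no threshold on the total height works at all.

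The fix --- and the bulk of the paper's proof --- is to exploit the weaker containment you set up but then discarded: one only needs $\overline{K}$ to contain the radical of \emph{some} proper standard parabolic. The paper partitions the sector by the sets $\frcR_{\{I,(\rho+2)\}}$, shows that for $D$ in such a set the group $V_y=\prod_{\gamma\in I^{+}}(\Gk_{y,0}\cap\Uk_{-\gamma})$ lies in $J$ and maps onto the unipotent radical of the standard parabolic of $\Gk_{D_+}/\Gk^{+}_{D_+}$ determined by $I$, and applies the finite-field cancellation whenever that radical is nontrivial. The residual case --- chambers for which every affine root vanishing on $D_{+}$ has gradient in $\Phi(\Delta\setminus I)$, so that this radical is trivial --- is handled by Proposition \eqref{keypermissible}, which shows there are only finitely many such chambers; they are then absorbed into the ``sufficiently large height'' hypothesis. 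Your proposal contains no counterpart to either the partition by $I$ or the finiteness argument via permissible sets, and without them the proof does not close.
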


\medskip

The proofs are in section {\eqref{proofsectioniwahori}{\,}}.
\bigskip

\subsection{Some preliminary results}

\

\medskip

We need several preliminary results to prove Theorem \eqref{dplus}. \ Suppose $\Sk$ is a maximal split torus, and $\ScptA = \ScptA (\Sk)$  its associated apartment.  Given a choice of positive roots $\Phi^{+} \subset \Phi = \Phi (\Sk )$, we write $\Delta$ for the corresponding set of simple roots, and we write the vectors of the dual basis of $\Delta$ as $\Delta^{\star} = \{ \, \lambda_{\alpha} \, | \, \alpha \in \Delta  \, \}$.  Fix a chamber $C_{0} \subset \ScptA$.  For any choice of a positive root system $\Phi^{+} \subset \Phi$, we defined the $C_{0}$-based sector $S(C_{0},\Phi^{+})$ in $\ScptA$ in \eqref{csector}.   The union ${\underset {\Phi^{+}} \bigcup } \ S(C_{0},\Phi^{+})$ over all positive sets of roots is the apartment $\ScptA$.

\medskip

\begin{lemma}\label{simplerootestimatea}  Suppose $\Phi^{+} \subset \Phi (\Sk)$ is a set of positive roots and  $\Delta \subset \Phi^{+}$ are the simple roots.  Fix a simple root $\alpha \in \Delta$, and suppose $\gamma \in \Phi^{+}$ satisfies $\lambda_{\alpha} (\gamma ) > 0$, i.e., the expression of $\gamma$ as a sum of simple roots has positive $\alpha$ coefficient.  If  $D \subset S(C_{0},\Phi^{+})$ is a chamber which is separated from $C_{0}$ by $L \ge 2$ distinct affine root hyperplanes $H_{\psi}$ with $\mygrad (\psi ) = \alpha$, then $D$ is separated from $C_{0}$ by at least $(L - 1)$ distinct affine root hyperplanes $H_{\psi}$ with $\mygrad (\psi ) = \gamma$.
\end{lemma}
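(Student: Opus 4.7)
The plan is to translate the hyperplane-separation counts into a comparison of floor functions at generic interior points, and then exploit the sector hypothesis on suitable positive roots.

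First I would fix generic interior points $x_{0} \in C_{0}$ and $y \in D$ so that $\beta(x_{0}), \beta(y) \notin \bZ$ for every root $\beta \in \Phi$. Because $C_{0}$ and $D$ are chambers, for each $\beta \in \Phi^{+}$ the quantity $\myht^{\pm\beta}_{C_{0}}(D)$ of \eqref{bruhatheightpieces} equals the number of integers strictly between $\beta(x_{0})$ and $\beta(y)$. Applying the definition of $S(C_{0}, \Phi^{+})$ to the affine root $\psi = \beta - \lfloor \beta(x_{0}) \rfloor$ (whose value at $x_{0}$ is the strictly positive fractional part $\{\beta(x_{0})\}$) gives $\beta(y) \geq \lfloor \beta(x_{0}) \rfloor$ for every $\beta \in \Phi^{+}$, so $\myht^{\pm\beta}_{C_{0}}(D) = \lfloor \beta(y) \rfloor - \lfloor \beta(x_{0}) \rfloor \geq 0$. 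The hypothesis $\myht^{\pm\alpha}_{C_{0}}(D) = L$ therefore reads $\lfloor \alpha(y) \rfloor - \lfloor \alpha(x_{0}) \rfloor = L$, and genericity promotes this to the strict inequality $\alpha(y) > \lfloor \alpha(x_{0}) \rfloor + L$.

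Next I would expand $\gamma = c_{\alpha}\alpha + \sum_{\beta \in \Delta\setminus\{\alpha\}} c_{\beta}\beta$ with $c_{\alpha} = \lambda_{\alpha}(\gamma) \in \bZ_{\geq 1}$. Combining the strict inequality for $\alpha$ with the sector inequalities $\beta(y) \geq \lfloor \beta(x_{0}) \rfloor$ for the remaining simple roots produces
\[
\lfloor \gamma(y) \rfloor \;\geq\; c_{\alpha}\lfloor \alpha(x_{0}) \rfloor + c_{\alpha} L + \sum_{\beta \neq \alpha} c_{\beta}\lfloor \beta(x_{0}) \rfloor,
\]
while the trivial bounds $\beta(x_{0}) < \lfloor \beta(x_{0}) \rfloor + 1$ supply the upper bound $\lfloor \gamma(x_{0}) \rfloor \leq \sum_{\beta} c_{\beta}\lfloor \beta(x_{0}) \rfloor + \mathrm{ht}(\gamma) - 1$. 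Subtracting gives the naive estimate $\myht^{\pm\gamma}_{C_{0}}(D) \geq c_{\alpha} L - \mathrm{ht}(\gamma) + 1$.

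This naive estimate already implies $\myht^{\pm\gamma}_{C_{0}}(D) \geq L - 1$ whenever $(c_{\alpha} - 1) L \geq \mathrm{ht}(\gamma) - 2$, which covers $\gamma = \alpha$ and the low-height cases. For the remaining (typically higher-height) cases I would sharpen the argument by invoking the sector inequality on further positive roots, namely on $\gamma$ itself and on the summands in an optimal decomposition $\gamma - c_{\alpha}\alpha = \phi_{1} + \cdots + \phi_{r}$ with each $\phi_{i} \in \Phi^{+}$, so as to reduce the fractional-part carry $\lfloor \sum_{\beta} c_{\beta}\{\beta(x_{0})\} \rfloor$ that controls the gap between $\sum_{\beta} c_{\beta}\lfloor \beta(x_{0}) \rfloor$ and $\lfloor \gamma(x_{0}) \rfloor$. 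I expect the main technical obstacle to be this combinatorial bookkeeping: one must show that the sector inequalities jointly bound the accumulated fractional-part carry by at most $(c_{\alpha} - 1) L + 1$, so that the gain $c_{\alpha} L \geq L$ from the $\alpha$-component leaves a net contribution of at least $L - 1$ and thereby yields the sharp conclusion $\myht^{\pm\gamma}_{C_{0}}(D) \geq L - 1$.
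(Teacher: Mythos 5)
Your first two paragraphs are correct, and the floor-function bookkeeping is a genuinely different (and in some ways cleaner) mechanism than the paper's: the paper works with the difference $\gamma(y)-\gamma(x)$ directly, notes $\beta(y)-\beta(x)>0$ for those simple roots $\beta$ that are separated, and for the non-separated ones replaces $y$ by a point $y'\in\mypfacet(D)$ with $\beta(y')=\beta(x)$, concluding $\gamma(y')-\gamma(x)>n_{\alpha}(L-1)$. Note that your naive estimate already proves Corollary \eqref{separation} outright: under its hypothesis every simple root contributes $L$ to $\lfloor\gamma(y)\rfloor$, and the same subtraction gives $\myht^{\pm\gamma}_{C_{0}}(D)\geq \big(\textstyle\sum_{\beta}c_{\beta}\big)(L-1)+1\geq L$.

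Your third paragraph, however, is a plan rather than a proof: the asserted bound on the accumulated fractional-part carry is exactly the content of the lemma in the remaining cases, and it is never established. This is a genuine gap, and it cannot be closed by bookkeeping alone, because the statement fails in precisely the regime you isolate (namely $c_{\alpha}=1$ with $\gamma-\alpha$ not a single positive root, so $r\geq 2$ in your decomposition). Concretely, in type $A_{3}$ take $\alpha=\alpha_{2}$ and $\gamma=\alpha_{1}+\alpha_{2}+\alpha_{3}$; let $C_{0}$ be the chamber containing the point with $\alpha_{1}=\alpha_{2}=\alpha_{3}=0.8$ (so $\gamma=2.4$ and the carry at $C_{0}$ equals $2$), and let $D$ be the chamber containing the point with $(\alpha_{1},\alpha_{2},\alpha_{3})=(0.1,\,L+0.1,\,0.1)$, i.e.\ the chamber $\{\alpha_{1}>0,\ \alpha_{3}>0,\ \alpha_{2}>L,\ \gamma<L+1\}$. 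The binding sector inequalities for $S(C_{0},\Phi^{+})$ are $\alpha_{i}\geq 0$, $\alpha_{1}+\alpha_{2}\geq 1$, $\alpha_{2}+\alpha_{3}\geq 1$, $\gamma\geq 2$, all of which hold on $D$; the chamber $D$ is separated from $C_{0}$ by exactly $L$ hyperplanes with gradient $\alpha_{2}$, but only by $L-2$ hyperplanes with gradient $\gamma$ (for $L=2$: by none, since both chambers lie in the band $2<\gamma<3$). Thus the carry can consume the entire gain $c_{\alpha}L$, and your estimate $\myht^{\pm\gamma}_{C_{0}}(D)\geq c_{\alpha}L-\sum_{\beta}c_{\beta}+1$ is essentially sharp. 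The same configuration tests the paper's own argument, since no $y'\in D$ has $\alpha_{1}(y')=\alpha_{3}(y')=0.8$ (such a point would have $\gamma(y')>L+1.6$, outside $D$). The conclusion to draw is that the lemma requires a stronger hypothesis — separation in every simple direction, as in Corollary \eqref{separation}, which your method handles immediately — rather than further refinement of the carry estimate.
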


\begin{proof} \quad Let $\gamma = {\underset {\beta \in \Delta} \sum } n_{\beta} \beta$ be the linear expansion of $\gamma$ in terms of simple roots.   The hypothesis that there are $L$ affine hyperplanes perpendicular to $\alpha$ means $\alpha (y) - \alpha (x) > (L-1)$ for any $x \in C_{0}$, and $y \in D$.  Suppose $\beta \in \Delta \, \backslash \, \{ \, \alpha \, \}$.  \ \ If \ (i) $D$ and $C_{0}$ are separated by an affine hyperplane perpendicular to $\beta$, then $\beta (y) \, - \, \beta (x) \, > \, 0$.  \ \ Else, \ (ii) $D$ and $C_{0}$ are not separated by any affine hyperplane perpendicular to $\beta$.  Let $Q$ be the set of these simple roots.  These simple roots are linearly independent (as is any nonempty subset of $\Delta$), and therefore we can replace $y$ by some $y' \in \mypfacet (D)$, so that $\beta (y') = \beta (x)$ for all $\beta \in S$.   \ Therefore, $\gamma (y') - \gamma (x) > n_{\alpha} (L-1) \ge (L-1)$.   This means $C_{0}$ and $D$ are separated by at least $(L-1)$  affine hyperplanes perpendicular to $\gamma$. 
\end{proof}

\smallskip

\begin{cor}\label{separation} \quad Suppose $D$ is a chamber in the $C_{0}$-based sector $S(C_{0},\Phi^{+})$, and for all simple roots $\alpha \in \Delta$, $D$ is separated from $C_{0}$ by $L \ge 2$ affine hyperplanes perpendicular to $\alpha$, then for any $\gamma \in \Phi^{+}$, $D$ is separated from $C_{0}$ by at least $(L-1)$  affine hyperplanes perpendicular to $\gamma$.
\end{cor}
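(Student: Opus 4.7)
The plan is to deduce the corollary directly from Lemma \eqref{simplerootestimatea} by applying it once for each positive root $\gamma$. The content of the lemma is precisely a conditional statement of the form: if $\gamma \in \Phi^{+}$ has positive coefficient on some simple root $\alpha$ relative to which $D$ is separated from $C_{0}$ by $L$ hyperplanes, then $D$ is separated from $C_{0}$ by at least $(L-1)$ hyperplanes perpendicular to $\gamma$. Thus the corollary amounts to ensuring that every $\gamma \in \Phi^{+}$ admits at least one simple root $\alpha$ on which we can apply the lemma.

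First, I would fix $\gamma \in \Phi^{+}$ arbitrary. Since $\Phi^{+}$ consists of non-negative integer combinations of $\Delta$ and $\gamma \neq 0$, there exists at least one simple root $\alpha \in \Delta$ for which the expansion coefficient $\lambda_{\alpha}(\gamma)$ is strictly positive. This is the sole place where I use the general structure of root systems, namely that $\Delta$ is a $\bZ_{\ge 0}$-basis of $\Phi^{+}$.

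Next, I would invoke the hypothesis of the corollary on this particular simple root $\alpha$: the chamber $D$ is separated from $C_{0}$ by $L \geq 2$ distinct affine hyperplanes $H_{\psi}$ with $\mygrad(\psi) = \alpha$. Applying Lemma \eqref{simplerootestimatea} directly (with its $\alpha$ and $\gamma$ matching ours, and noting $D \subset S(C_{0},\Phi^{+})$ is also an ambient hypothesis of the lemma), I conclude that $D$ is separated from $C_{0}$ by at least $(L-1)$ distinct affine hyperplanes $H_{\psi}$ with $\mygrad(\psi) = \gamma$. Since $\gamma$ was arbitrary in $\Phi^{+}$, this yields the corollary.

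There is really no obstacle here: the work is entirely contained in Lemma \eqref{simplerootestimatea}, and the corollary is the observation that its hypothesis on the existence of a positively-supporting simple root is automatically satisfied for every $\gamma \in \Phi^{+}$. The only minor check to perform is that the choice of $\alpha$ may depend on $\gamma$, but this is harmless since the estimate $(L-1)$ holds uniformly in $\alpha \in \Delta$ by the assumption of the corollary.
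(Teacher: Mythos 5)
Your proposal is correct and is exactly the argument the paper intends (its proof of the Corollary is simply ``Clear''): every $\gamma \in \Phi^{+}$ has a strictly positive coefficient on some simple root $\alpha$, and Lemma \eqref{simplerootestimatea} applied to that pair gives the bound. Nothing further is needed.
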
 

\begin{proof} Clear.
\end{proof}

\medskip

\bigskip

\subsection{Proof of Theorems \eqref{dplus}  and \eqref{mainiwahori}}\label{proofsectioniwahori}

\

\medskip

We prove Theorem \eqref{dplus}.

\begin{proof}  \quad We can and do replace the open compact subgroup $J$ by a group $G_{x_{0},\rho}$, where $x_{0} \in C_{0}$, and $\rho$ is a sufficiently large integer to insure $G_{x_{0},\rho} \subset J$.  The fact that $\rho$ is an integer means $G_{x,\rho} = G_{x_{0},\rho}$ for any $x \in \mypfacet{(C_{0})}$.

\medskip

Take $\Sk$ to be a maximal split torus of $\Gk$ so that the apartment $\ScptA = \ScptA (\Sk )$ contains $C_{0}$ and $D$.  We recall $\ScptA$ is the union the $C_{0}$-based sectors $S(C_{0}, \Phi^{+})$ as $\Phi^{+}$ runs over the sets of possible positive roots.

\medskip

Take $\Phi^{+} \subset \Phi = \Phi (\Sk )$ to be a choice of positive roots so that the chamber based sector $S(C_{0}, \Phi^{+})$ contains $D$.  The choice of the particular set of positive roots may not be unique.  For a choice $\Phi^{+}_{D}$, let $\mB_{D}^{+}$ and $\mB_{D}^{-}$ be the Borel subgroups of $\mG$ associated to the sets of positive roots $\Phi^{+}$ and $-\Phi^{+}$.     Let $\Delta$ denote the set of simple roots of $\Phi^{+}$.  We have a decomposition of the subgroup $J = \Gk_{x_{0},\rho}$ into subgroups:

\begin{equation}
  \Gk_{x_{0},\rho} \ = \ \Gk_{x_{0},\rho}^{-} \,\Sk_{\rho} \ \Gk_{x_{0},\rho}^{+} \ ,
\end{equation}

\noindent{where}

$$
\Gk_{x_{0},\rho}^{+}  := \Gk_{x_{0},\rho} \cap {\mB_{D}^{+}}(\mk) \quad {\text{\rm{is a product}}} {\underset {{\text{\rm{\tiny{$\begin{array} {c} \psi \ , \ \psi (x_{0}) \geq \rho \\ \mygrad ({\psi} ) \in  \Phi^{+} \end{array}$ }}}}} \prod} \ \Xk_{\psi}
$$

\noindent{of} affine roots groups $\Xk_{\psi}$, with $\psi (x_{0}) \geq \rho$, and $\mygrad ({\psi} ) \in  \Phi^{+}$.  Similarly,

$$
\Gk_{x_{0},\rho}^{-}  := \Gk_{x_{0},\rho} \cap {\mB_{D}^{-}}(\mk) \quad = {\underset {{\text{\rm{\tiny{$\begin{array} {c} \psi \ , \ \psi (x_{0}) \geq \rho \\ \mygrad ({\psi} ) \in  -\Phi^{+} \end{array}$ }}}}} \prod} \ \Xk_{\psi} \, .
$$

\noindent{Suppose} $\alpha \in \Phi^{+}$.  The intersection $\Gk_{x_{0},\rho}^{-} \cap \Uk_{-\alpha}$ is an affine root group $\Xk_{\psi}$ (necessarily $\mygrad (\psi ) = -\alpha$), and for all $x \in C_{0}$, we have \ $\rho + 1 > \psi (x) \ge \rho$.   \ If the chambers $C_{0}$ and $D$ are separated by at least $\rho+1$ affine hyperplanes perpendicular to $\gamma$, then  $\psi (y) < 0$ for all $y \in \mypfacet (D)$, and thus $\psi (y) \le 0$ for all $y \in D$.  This means the affine root group $\Xk_{\psi}$ contains $\Gk_{y,0} \cap \Uk_{-\alpha}$ for all $y \in D$.  We note that since $\psi (x) \ge \rho$ for all $x \in C_{0}$ (so for $x_0$), it is the case that $\Xk_{\psi}$ is contained in $J = \Gk_{x_{0},\rho}$.  

\medskip

To summarize:  If $\gamma \in \Phi^{+}$, and $D$ is a chamber of  $S(C_{0}, \Phi^{+})$ separated from $C_{0}$ by sufficiently many (at least  ($\rho + 1$)) affine hyperplanes perpendicular to $\gamma$, then $\Gk_{x_{0},\rho}^{-} \cap \Uk_{-\gamma}$, hence $\Gk_{x_{0},\rho}$, contains $\Gk_{y,0} \cap \Uk_{-\gamma}$ for any $y \in D$.  

\medskip

For $k \in \bN$, set

\begin{equation}\label{recessgroupone}
{\frcR}_{k} \ = \ {\text{\rm{the set }}} \Big\{ \ 
\begin{tabular}{p{3.5in}}
chambers $D$ of $S(C_{0},\Phi^{+})$ satisfying{\,}: for each $\alpha \in \Delta$, the chamber $D$ is separated from $C_{0}$ by at least $k$ affine hyperplanes $H_{\psi}$ perpendicular to $\alpha$. \\
\end{tabular}
\ \Big\}
\end{equation}

\medskip

\noindent{Suppose} $D \in {\frcR}_{(\rho+2)}$.  By Corollary \eqref{separation}, for any $\gamma \in \Phi^{+}$, the chamber $D$ is separated from $C_{0}$ by at least $(\rho + 1)$ affine hyperplanes perpendicular to $\gamma$; hence, $\Gk_{x_{0},\rho }$ contains $\Gk_{y,0} \cap \Uk_{-\gamma}$ for all $y \in D$.

\medskip

\noindent{When} $E$ is a facet in $\ScptA$, define{\,}:
\smallskip
\begin{equation}\label{facetaffineroots}
\aligned
\Psi (E) \ :&= \ {\text{\rm{set of affine roots $\psi$ which vanish on $E$}}}{\,} \, , \\
\Psi (E,\Phi^{+}) \ :&= \ \{ \, \psi \in \Psi (E) \ | \ \mygrad (\psi ) \in \Phi^{+} \, \} \, .
\endaligned
\end{equation} 
\medskip
Recall $D_{+}$ is defined to be the facet of $D$ which is the intersection of all the outward oriented faces of $D$.   We pick $y$ to be a point in $\mypfacet (D_{+})$.    The finite field group $\Gk_{y,0}/\Gk_{y,0^{+}}$ has root system
$$
\Phi (\Gk_{y,0}/\Gk_{y,0^{+}} ) \ = \ \{ \ \mygrad (\psi ) \ | \ {\text{\rm{$\psi$ affine root so that $\psi_{|_{D_{+}}} \equiv 0$}}} \ \} \ = \ \mygrad ( \Psi (E) ) \, .
$$

\medskip
\noindent{Suppose} $\psi \in \Psi (E,\Phi^{+})$.  The hypothesis $C_{0}$ and $D$ are separated by $(\rho + 1)$ affine hyperplanes perpendicular to $\gamma$ means $(\psi (y) - \psi (x_0)) \, > \, \rho$, i.e., $-\psi (x_{0}) > \rho$.  Thus, $\Gk_{x_{0},\rho}$ contains $\Xk_{-\psi}$. This latter subgroup is $\Gk_{y,0} \cap \Uk_{-\gamma}$.  It follows   

\begin{equation}\label{recessgrouptwo}
V_{y} \ :=  \ {\underset { \psi \, \in \, \Psi (E,\Phi^{+}) } \prod } \ \Xk_{-\psi}
\end{equation}

\noindent{is} contained in $\Gk_{x_{0}, \rho}$ and that $V_{y} \Gk^{+}_{D_{+}} = \Gk^{+}_{D}$ (the subgroup $V_{y}$ adds in the affine root groups $\Xk_{-\psi}$), so $V_{y} \Gk^{+}_{D_{+}}/\Gk^{+}_{D_{+}} \subset \Gk_{D_{+}}/\Gk^{+}_{D_{+}}$ is the unipotent radical of the Borel subgroup $\Gk_{D}/\Gk^{+}_{D_{+}}$. 

\medskip

This means, by Corollary \eqref{finitefieldcora},  the convolution 
\smallskip
\begin{equation}\label{recessgroupthree}
\aligned
{\underset 
{E  \,  \in  \, {\mathcal F}_{+}(D)}
\sum } 
(-1)^{{\text{\rm{dim}}}(E)} \ e_{E} &\ \star \ e_{V_{y}} \ =  {\underset 
{E  \,  \in  \, {\mathcal F}_{+}(D)}
\sum } \ (-1)^{{\text{\rm{dim}}}(E)} \ ( \ e_{E} \ \star \ e_{\Gk_{y,0^{+}}} ) \ \star \ e_{V_{y}} \\
&\ = {\underset 
{E  \,  \in  \, {\mathcal F}_{+}(D)}
\sum } \ (-1)^{{\text{\rm{dim}}}(E)} \ e_{E} \ \star \ ( \ e_{\Gk_{y,0^{+}}} \ \star \ e_{V_{y}} \ ) \\
&\ = {\underset
{E  \,  \in  \, {\mathcal F}_{+}(D)}
\sum } \ (-1)^{{\text{\rm{dim}}}(E)} \ e_{E} \ \star \ ( \, e_{(\Gk_{y,0^{+}} \, V_{y} )} \, ) \\
&\ =  \ {\text{\rm{ zero function}}} \, . \\ 
\endaligned
\end{equation}

\noindent{So}, under the assumption $D \subset \frcR_{(\rho+2)}$, we see

\begin{equation}\label{recessgroupfour}
\aligned
{\underset 
{E  \,  \in  \, {\mathcal F}_{+}(D)}
\sum } 
(-1)^{{\text{\rm{dim}}}(E)} \ e_{E} &\ \star e_{\Gk_{x_{0},\rho}} \ =  {\underset 
{E  \,  \in  \, {\mathcal F}_{+}(D)}
\sum } \ (-1)^{{\text{\rm{dim}}}(E)} \ e_{E} \ \star \ ( \ e_{V_{y}} \ \star \ e_{\Gk_{x_{0},\rho}} \ ) \\
&\ = \big( {\underset 
{E  \,  \in  \, {\mathcal F}_{+}(D)}
\sum } \ (-1)^{{\text{\rm{dim}}}(E)} \ e_{E} \ \star \ e_{V_{y}} \ \big) \ \star \ e_{\Gk_{x_{0},\rho}} \\
&\ =  \ 0 \ \star \ e_{\Gk_{x_{0},\rho}} \ = \ {\text{\rm{ zero function}}} \, . \\ 
\endaligned
\end{equation}

\bigskip

We turn to the situation when the chamber $D$ is in $S(C_{0},\Phi^{+}) \, \backslash \, \frcR_{(\rho + 2)}$.

\bigskip

For a subset $I \subset \Delta$, and an integer $k$ set 

\begin{equation}\label{recessgroupsix}
\aligned
{\frcR}_{\{ I, k \}} \ := \ \{ \, D \subset S(C_{0},\Phi^{+}) \ | \ &\myht^{\pm \alpha}_{C_{0}}(D) \, \ge  \, k \ \ \forall \ \alpha \in I \ {\text{\rm{, and }}} \\ 
&\myht^{\pm \alpha}_{C_{0}}(D) \, < \, k \ \ \forall \ \alpha \in  ( \Delta \, \backslash \, I) \, \} \, .
\endaligned
\end{equation}

\smallskip

\noindent{We note}
\begin{itemize}
\item[(i)] ${\frcR}_{\{ \Delta, k \}}$ is  the set ${\frcR}_{k}$ in \eqref{recessgroupone}{\,}.
\smallskip
\item[(ii)] The set ${\frcR}_{\{ I, k \}}$ is finite precisely when $I \, = \, \emptyset${\,}.
\smallskip
\item[(iii)] For a fixed $k$, the sets ${\frcR}_{\{ I, k \}}$ partition $S(C_{0},\Phi^{+})${\,}.
\end{itemize}

\medskip

To complete the proof of Theorem \eqref{dplus}, we need to show, when $I$ is nonempty, the convolution \eqref{dplusconvolution} vanishes for all $D \in {\frcR}_{\{ I, (\rho + 2) \}}$ provided $\myht_{C_{0}}(D)$ is sufficiently large.   The case $I \, = \, \Delta$ has already been treated above.  Set 
\smallskip
$$
I^{+} \ := \ \{ \ \gamma \in \Phi^{+} \ | \ {\text{\rm{there exists $\alpha \in I$, such that $\lambda_{\alpha}(\gamma) > 0$}}} \} \, .
$$
\bigskip
We reuse the initial argument when $I \, = \, \Delta$ to see
$$
\forall \ D \, \in \, {\frcR}_{\{ I, (\rho + 2) \}} \ , \ {\text{\rm{and}}} \ y \in D \ \ : \quad \Gk_{x_{0}, \rho} \, \supset \, (\Gk_{y,0} \cap \Uk_{-\gamma}) \quad \forall \ \gamma \, \in \, I^{+} \, . 
$$

\noindent{Thus,}

\begin{equation}\label{recessgroupeight}
\Gk_{x_{0},\rho} \, \supset \, V_{y} \ := \ {\underset {\gamma \in I^{+}} \prod } (\Gk_{y,0} \cap \Uk_{-\gamma}) \ .
\end{equation}

\noindent{Take} $y \in D_{+}$.  We recall the $\Sk_{c}$-roots $\Phi (\Gk_{D_{+}}/\Gk^{+}_{D_{+}},\Sk_{c})$  of the finite field group $\Gk_{D_{+}}/\Gk^{+}_{D_{+}}$ are the gradients of the affine roots in the set $\Psi (D_{+})$ (see \eqref{facetaffineroots}).  Suppose $\psi$ is such an affine root, and $\gamma := \mygrad (\psi) \in \Phi^{+}$.  If  there is an $\alpha \in I$ such that $\lambda_{\alpha} (\gamma ) > 0$, then necessarily $C_{0}$ and $D$ are separated by at least $\rho + 1$ affine hyperplanes perpendicular to $\gamma$.  Since $\psi (y) = 0$, we get $-\psi (x_{0}) \, = \, (\psi (y) - \psi (x_{0}) ) \, >  \, \rho$.  This means $\Xk_{-\psi} \, \subset \, \Gk_{x_{0}, \rho}$.   The image of $\Xk_{-\psi}$ in the finite field group $\Gk_{y,0}/\Gk^{+}_{y,0}$ (equal to $\Gk_{D_{+}}/\Gk^{+}_{D_{+}}$) is the (nontrivial) root group attached to $-\gamma$. 

\smallskip

To continue the proof, we observe the group $V_{y} \Gk^{+}_{D_{+}}/\Gk^{+}_{D_{+}}$ is the unipotent radical of the $\Gk_{D}/\Gk^{+}_{D_{+}}$-standard parabolic subgroup $\Gk_{F}/\Gk^{+}_{D_{+}}$  of $\Gk_{D_{+}}/\Gk^{+}_{D_{+}}$ generated by the affine root groups $\Xk_{-\psi}$ with $\psi \in \Psi (D_{+})$ satisfying $\mygrad (\psi ) \in I^{+}$.   If we define the coroot
$$
\lambda_{I} \ := \ {\underset {\alpha \in I} \sum } \ \lambda_{\alpha} \ ,
$$
then: {\ }(i){\,} the Levi subgroup of $\Gk_{F}/\Gk^{+}_{D_{+}}$ (containing $\Sk_{c}\Gk^{+}_{D_{+}}$ is generated by the root groups $\Xk_{\psi}$ ($\psi \in \Psi (D_{+})$) satisfying $\lambda_{I} (\mygrad (\psi )) = 0$, and {\,}(ii){\,} the unipotent radical of $\Gk_{F}/\Gk^{+}_{D_{+}}$ is the product of the root groups $\Xk_{-\psi}$ with $\lambda_{I} (\mygrad (\psi )) > 0$.  Provided $V_{y} \Gk^{+}_{D_{+}}/\Gk^{+}_{D_{+}}$ is not the trivial unipotent subgroup, the ending argument for the case $\frcR_{(\rho + 2)}$ can be applied to deduce the convolution  \eqref{dplusconvolution} vanishes.  By what we have argued above, this happens if there exists $\psi \in \Psi (D_{+} , \Phi^{+})$ and an $\alpha \in I$ with $\lambda_{\alpha}( \mygrad (\psi) ) \ne 0$.  For these chambers the convolution \eqref{dplusconvolution} vanishes.  

\smallskip

Given a subset $K \subset \Delta$, set
\begin{equation}\label{subsetofdelta}
\Phi (K) \ := \ \{ \, \alpha \in \Phi \ | \ \alpha {\text{\rm{ is a linear combination of (simple) roots in $K$}}} \, \} \ .
\end{equation}

We are reduced to investigating $D \in \frcR_{I,(\rho + 2)}$ so that every $\psi \in \Psi ( D_{+} , \Phi^{+} )$ satisfies $\mygrad (\psi ) \in \Phi (\Delta \backslash I)$.  Denote this set by $\frcR^{\text{\rm{last}}}_{I,(\rho + 2)}$.

$$
\frcR^{\text{\rm{last}}}_{I,(\rho + 2)} \ := \ \{ \ D \in \frcR_{I,(\rho + 2)} \ | \ {\text{\rm{$\psi \in \Psi ( D_{+} , \Phi^{+})$ satisfies $\mygrad (\psi ) \in \Phi (\Delta \backslash I)$}}}  \ \} \, .
$$

The set of outward oriented faces of a chamber $D \in  \frcR^{\text{\rm{last}}}_{I,(\rho + 2)}$ must have gradients in $\Phi ( \Delta \backslash I)$, but in principle, it could be a proper subset.  We partition $\frcR^{\text{\rm{last}}}_{I,(\rho + 2)}$ as follows: \ \ To a (nonempty) subset $K$ of $\Delta \backslash I$, we set

$$
\aligned
\frcR^{\text{\rm{K}}}_{I,(\rho + 2)} \ := \ \{ \ D \in \frcR_{I,(\rho + 2)} \ | \ &{\text{\rm{$\psi \in \Psi ( D_{+} , \Phi^{+} )$ satisfies}}} \\
&{\text{\rm{(i) $\mygrad (\psi ) \in \Phi (\Delta \backslash I)$}}}   \\ 
&{\text{\rm{(ii) each simple root $\beta \in K$ occurs as a $\mygrad (\psi)$}}}  \ \} \ . 
\endaligned
$$ 

\noindent{The} sets $\frcR^{\text{\rm{K}}}_{I,(\rho + 2)}$ are a partitioning of $\frcR^{\text{\rm{last}}}_{I,(\rho + 2)}$ into $2^{\# (\Delta \backslash I)}-1$ subsets.  Furthermore, a chamber in $\frcR^{\text{\rm{K}}}_{I,(\rho + 2)}$ is incident with the permissible set $\tilde{K} \,:= \, \{ \, \psi \in \Psi (D_{+},\Phi^{+}) \ | \ \mygrad (\psi ) \in K \, \}$ (see \eqref{permissibleone}).  We apply Proposition \eqref{keypermissible} to say the number of chambers in $\frcR^{\text{\rm{K}}}_{I,(\rho + 2)}$ is finite.   So, $\frcR^{\text{\rm{last}}}_{I,(\rho + 2)}$ is finite, and we deduce the Theorem when $D$ is a chamber in $\ScptA$.  

\medskip

The (compact) Iwahori subgroup $\Gk_{C_{0}}$ acts transitively on the set $\ScptA (C_{0})$ of apartments containing $C_{0}$.  Fix an apartment $\ScptA'$ containing $C_{0}$.  Consider an apartment $g \ScptA'$ ($g \in \Gk_{C_{0}}$).  The above argument applied to the apartment $g \ScptA'$ shows there is an compact open subgroup $K_{g}$ and an integer $M_{g} > 0$ so that the convolution \eqref{dplusconvolution} vanishes for all $D \in hg\ScptA'$ ($h \in K_{g}$) provided $\myht_{C_{0}}(D) \ge M_{g}$.  The collection of sets $\{ \, K_{g}g \ | \ g \in \Gk_{C_{0}} \, \}$ is an open cover of $\Gk_{C_{0}}$, and so has a finite subcover $\{ \, g_{i}K_{g_{i}} \ | \ i = 1,\dots , n \, \}$.  Take $M = {\text{\rm{max}}} (M_{g_{1}}, \dots , M_{g_{n}})$.   The convolution \eqref{dplusconvolution} vanishes for any chamber $D$ of $\ScptB$ satisfying $\myht_{C_{0}}(D) \ge M$.

\end{proof}

\vskip 0.50in

We turn to the proof of Theorem \eqref{mainiwahori}.

\begin{proof} \quad We recall that we can replace the open compact subgroup by a subgroup $\Gk_{x_{0},\rho } \subset J$ with $x_{0} \in C_{0}$ and $\rho$ integral.  We fix a chamber $C_{0}$ and for a positive integer $m$, consider the ball $\myBall (C_{0},m)$ of \eqref{ballshellradiusm}.    We consider the convolutions

\begin{equation}\label{ballofradiusm}
{\underset 
{E  \,  \subset \, {\text{\rm{Ball}}}(C_{0},m) }
\sum } 
(-1)^{{\text{\rm{dim}}}(E)} \ e_{E} \ \star e_{\Gk_{x_{0},\rho }} \ .
\end{equation}

\smallskip

\noindent{The} sum is over the facets in ${\text{\rm{Ball}}}(C_{0},m)$.  It is clear the convolution over ${\text{\rm{Ball}}}(C_{0},(m+1))$ is obtained from the convolution over ${\text{\rm{Ball}}}(C_{0},m)$ by adding convolution terms of the form

$$
{\underset 
{E  \,  \in  \, {\mathcal F}_{+}(D)} 
\sum }
(-1)^{{\text{\rm{dim}}}(E)} \ e_{E} \ \star e_{\Gk_{x_{0},\rho}} \, ,
$$

\noindent{where} $D$ runs over the chambers satisfying $\myht_{C_{0}}(D) = (m+1)$, i.e., in Shell$(C_{0},(m+1))$.  \ By Theorem \eqref{dplus}, these convolution terms vanish provided $m$ is sufficiently large.  Therefore, the convolution over ${\text{\rm{Ball}}}(C_{0},m)$ and ${\text{\rm{Ball}}}(C_{0},(m+1))$ are the same when $m$ is sufficiently large.  \ This establishes Theorem \eqref{mainiwahori}.

\end{proof}

\bigskip

\subsection{The Iwahori--Bernstein component} \quad The next Proposition and Corollary show the essentially compact distribution of Theorem \eqref{mainiwahori} is the projector to the Bernstein component of representations with a nonzero Iwahori fixed vector.

\medskip

\begin{prop} For any facet $E \subset \ScptB = \ScptB (G)$, define the idempotent $e_{E}$ as in \eqref{iwahoriidempotent}. \ Fix a chamber $C_{0}$ in $\ScptB = \ScptB (G)$. \ Then:
\begin{itemize}
\item[(i)] 
$$
{\underset 
{E  \,  \subset  \, C_{0}}
\sum } 
(-1)^{{\text{\rm{dim}}}(E)} \ e_{E} \ \star \ e_{C_{0}} \ = \ e_{C_{0}}
$$
\item[(ii)]  For any chamber $D \ \neq \ C_{0}$:
$$
{\underset 
{E  \,  \in  \, {\mathcal F}_{+}(D)}
\sum } 
(-1)^{{\text{\rm{dim}}}(E)} \ e_{E} \ \star \ e_{C_{0}} \ = \ {\text{\rm{zero function}}} \, .
$$
\item[(iii)] 
$$
\big( {\underset 
{E  \,  \subset \, \ScptB (G)}
\sum } 
(-1)^{{\text{\rm{dim}}}(E)} \ e_{E} \big) \ \star \ e_{C_{0}} \ = \ e_{C_{0}}
$$
\end{itemize}
\end{prop}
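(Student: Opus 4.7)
The plan is to prove (i), (ii), and (iii) in that order, with (iii) following from the first two via Theorem \eqref{mainiwahori}.

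For (i), I would show the identity $e_E \star e_{C_0} = e_{C_0}$ for every $E \subset C_0$. The point is that $E \subset \overline{C_0}$ gives $\Gk^+_E \subset \Gk_{C_0} \subset \Gk_E$, so $\Gk_{C_0}/\Gk^+_E$ is canonically a Borel $\BFq$ of the reductive quotient $\GFq := \Gk_E/\Gk^+_E$; any vector $v \in V^{\Gk_{C_0}}$ is then both $\Gk^+_E$-fixed and $\BFq$-fixed, hence belongs to the image of the Iwahori-block projector $\pi(e_E)$. Summing this identity against the binomial coefficients $\sum_{k=0}^{\ell}(-1)^k\binom{\ell+1}{k+1}=1$ yields (i).

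For (ii), I would imitate the core construction inside the proof of Theorem \eqref{dplus}, specialized to $J = \Gk_{C_0}$. Choose an apartment $\ScptA$ containing both $C_0$ and $D$, and a positive root system $\Phi^+ \subset \Phi(\Sk)$ such that $D$ lies in the $C_0$-based sector $S(C_0,\Phi^+)$. Set $V_y := \prod_{\psi \in \Psi(D_+,\Phi^+)} \Xk_{-\psi}$. Two geometric inputs are needed. First, $V_y \,\Gk^+_{D_+} = \Gk^+_D$ (precisely the identity established inside the proof of Theorem \eqref{dplus}), which implies that the image of $V_y$ in $\GFq := \Gk_{D_+}/\Gk^+_{D_+}$ is the unipotent radical of the Borel $\BFq := \Gk_D/\Gk^+_{D_+}$. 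Second, $V_y \subset \Gk_{C_0}$: for $y \in \mypfacet(D_+)$ and $x_0 \in \mypfacet(C_0)$, the sector condition $D_+ \subset \overline{D} \subset S(C_0,\Phi^+)$ gives $\mygrad\psi(y - x_0) \ge 0$, so $\psi(x_0) = -\mygrad\psi(y - x_0) \le 0$, and hence $\Xk_{-\psi} \subset \Gk_{x_0,0} = \Gk_{C_0}$. Corollary \eqref{finitefieldcora}, applied in $\GFq$ to the proper standard parabolic $\BFq$ and pulled back through $\Gk^+_{D_+}$, gives
$$
\sum_{E \in \mathcal{F}_+(D)} (-1)^{\dim E}\, e_E \star e_{V_y\,\Gk^+_{D_+}} \ = \ 0\, ;
$$
the $\Gk^+_{D_+}$-bi-invariance of each $e_E$ turns this into $\sum_E(-1)^{\dim E}\, e_E \star e_{V_y} = 0$, and convolution on the right by $e_{C_0}$---using $V_y \subset \Gk_{C_0}$ so that $e_{V_y}\star e_{C_0} = e_{C_0}$---delivers (ii).

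For (iii), by Theorem \eqref{mainiwahori} the infinite alternating sum $P := \sum_{E \subset \ScptB}(-1)^{\dim E}e_E$ defines an essentially compact distribution, so $P \star e_{C_0}$ has compact support and, by the stabilization argument already used in the proof of that theorem, equals $\sum_{E \subset \myBall(C_0,N)} (-1)^{\dim E}\, e_E \star e_{C_0}$ for all sufficiently large $N$. Splitting via the same telescoping---the facets in $\overline{C_0}$ form one initial block, and each subsequent chamber $D$ with $0 < \myht_{C_0}(D) \le N$ contributes exactly the block $\mathcal{F}_+(D)$---parts (i) and (ii) evaluate these blocks to $e_{C_0}$ and $0$ respectively, so $P \star e_{C_0} = e_{C_0}$.

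The main obstacle is Step 2, and specifically the containment $V_y \subset \Gk_{C_0}$. The essential simplification over the setting of Theorem \eqref{dplus} is that $\Gk_{C_0} = \Gk_{x_0,0}$ corresponds to depth $\rho = 0$, so the required inequality $\psi(x_0) \le -\rho$ degenerates to $\psi(x_0) \le 0$, which holds for every $\psi \in \Psi(D_+,\Phi^+)$ whenever $D$ lies in a positive sector from $C_0$. This is what removes the finite exceptional set $\frcR^{\text{last}}_{I,(\rho+2)}$ that forces Theorem \eqref{dplus} to restrict attention to chambers of sufficiently large Bruhat height, and is the reason (ii) can be proved for each individual $D \neq C_0$ rather than only for all but finitely many.
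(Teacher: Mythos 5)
Parts (i) and (iii) of your argument are fine and essentially the paper's: (i) reduces to $e_{E} \star e_{C_{0}} = e_{C_{0}}$ for every facet $E \subset C_{0}$ together with the Euler characteristic $\sum_{k}(-1)^{k}\binom{\ell+1}{k+1}=1$ of a simplex, and (iii) is the telescoping of $\myBall (C_{0},N)$ into the block $\{E \subset C_{0}\}$ and the blocks ${\mathcal F}_{+}(D)$, using the stabilization from Theorem \eqref{mainiwahori}.

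The gap is in (ii), at precisely the step you flag as the main obstacle: the containment $V_{y} \subset \Gk_{C_{0}}$ is false in general. You justify it by claiming the sector condition gives $\mygrad (\psi ) (y - x_{0}) \ge 0$ for $\psi \in \Psi (D_{+}, \Phi^{+})$; but for $\alpha = \mygrad (\psi ) \in \Phi^{+}$ the sector $S(C_{0},\Phi^{+})$ only forces $\alpha (y) \ge m$, where $m < \alpha (x_{0}) < m+1$, so all one gets is $\alpha (y) - \alpha (x_{0}) > -1$, i.e.\ $\psi (x_{0}) < 1$ rather than $\psi (x_{0}) \le 0$. The inequality genuinely fails whenever a child face of $D$ lies in a bounding wall of the sector. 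Concretely, in type $\tilde{A}_{2}$ take $C_{0} = \{\, 0 < \alpha_{1}, \ 0 < \alpha_{2}, \ \alpha_{1}+\alpha_{2} < 1 \,\}$ and $D = \{\, 0 < \alpha_{1}, \ k < \alpha_{2}, \ \alpha_{1}+\alpha_{2} < k+1 \,\}$ with $k \ge 1$: then $c(D)$ consists of the faces in $H_{\alpha_{1}}$ and $H_{\alpha_{1}+\alpha_{2}-(k+1)}$, so $\psi = \alpha_{1}$ lies in $\Psi (D_{+},\Phi^{+})$ with $\psi (x_{0}) = \alpha_{1}(x_{0}) > 0$, and $\Xk_{-\alpha_{1}} \not\subset \Gk_{C_{0}}$. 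These are exactly the chambers that in the proof of Theorem \eqref{dplus} fall into the sets ${\frcR}_{\{ I, k \}}$ with $I \neq \Delta$ and require the extra apparatus (Proposition \eqref{keypermissible}); there one may discard finitely many exceptional chambers, but in (ii) the vanishing is needed for \emph{every} $D \neq C_{0}$, so they cannot be discarded, and your closing paragraph's claim that the exceptional set disappears at depth $0$ is where the argument breaks. The paper circumvents this by replacing $V_{y}$ with $\Gk_{C_{0},0^{+}} \cap \Gk_{D_{+},0}$, which lies in $\Gk_{C_{0}}$ by construction and still satisfies $(\Gk_{C_{0},0^{+}} \cap \Gk_{D_{+},0})\, \Gk_{D_{+},0^{+}} = \Gk_{D}$; the alternating sum is then convolved with $e_{\Gk_{D}}$, whose image in $\Gk_{D_{+}}/\Gk^{+}_{D_{+}}$ is the full Borel subgroup, and Corollary \eqref{finitefieldcora} applies. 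Your proof of (ii) is repaired by making that substitution; the rest of your step (the identity $V_{y}\Gk^{+}_{D_{+}} = \Gk^{+}_{D}$ and the finite-field vanishing) is sound but insufficient on its own.
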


\begin{proof} \quad Statement (i) follows from the fact that $e_{E} \star e_{C_{0}} \, = \, e_{C_{0}}$ for any facet $E \subset C_{0}$.

\medskip

Statement (ii) is seen by modifying the proof of Theorem \eqref{dplus}. \ Let $\Phi^{+} \subset \Phi$ be a positive root system so that $S(C_{0},\Phi^{+})$ contains $D$.  The difference between the two Iwahori subgroups $\Gk_{D}$ and $\Gk_{C_{0}}$ is the following: \ for $\alpha \in \Phi^{+}$ it is the case $(\Gk_{C_{0}} \cap \Uk_{\alpha} ) \subset ( \Gk_{D} \cap \Uk_{\alpha})$, while $( \Gk_{C_{0}} \cap \Uk_{-\alpha} )  \supset ( \Gk_{D} \cap \Uk_{-\alpha} )$.  Thus, $(\Gk_{C_{0},0^{+}} \cap \Gk_{D_{+},0})\Gk_{D_{+},0^{+}} = \Gk_{D}$.  So

$$
\aligned
{\underset 
{E  \,  \in  \, {\mathcal F}_{+}(D)}
\sum } 
(-1)^{{\text{\rm{dim}}}(E)} \ &e_{E} \ \star \ e_{C_{0}} \ = \ \big( \, {\underset 
{E  \,  \in  \, {\mathcal F}_{+}(D)}
\sum } 
(-1)^{{\text{\rm{dim}}}(E)} \ e_{E} \ \star \ e_{\Gk_{D_{+},0^{+}}} \, \big) \star ( \, e_{(\Gk_{C_{0},0^{+}} \cap \Gk_{D_{+},0})}  \star e_{C_{0}} ) \\
&= \ \big( \, {\underset 
{E  \,  \in  \, {\mathcal F}_{+}(D)}
\sum } 
(-1)^{{\text{\rm{dim}}}(E)} \ e_{E} \ \star \ ( \, e_{\Gk_{D_{+},0^{+}}}  \, \star  \, e_{(\Gk_{C_{0},0^{+}} \cap \Gk_{D_{+},0})} \, ) \  \big) \star e_{C_{0}} ) \\
&= \ \big( \, {\underset 
{E  \,  \in  \, {\mathcal F}_{+}(D)}
\sum } 
(-1)^{{\text{\rm{dim}}}(E)} \ e_{E} \ \star \ e_{\Gk_{D}} \, \big) \star  e_{C_{0}}  \\
&= \ {\text{\rm{zero function}}} \, .
\endaligned
$$

\medskip

Statement (iii) is an obvious consequence of statements (i) and (ii).

\end{proof}

\medskip

\begin{cor}\label{iwahoriprojectorcor}  The distribution $P  := \big( {\underset {E  \,  \subset \, \ScptB (G)} \sum } (-1)^{{\text{\rm{dim}}}(E)} \ e_{E} \big)$  is the projector to the Bernstein component with nonzero Iwahori fixed vectors.
\end{cor}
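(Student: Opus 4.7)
The plan is to compute, for each irreducible smooth representation $(\pi,V_{\pi})$ of $\Gk$, the scalar $\pi(P)$ and show it equals $1$ precisely on representations carrying a nonzero Iwahori-fixed vector and $0$ otherwise. Since every such $\pi$ lies in the Iwahori--Bernstein component $\Omega_{I}$ (all irreducibles with nonzero fixed vectors under $\Gk_{C_{0}}$ form a single component), this identifies $P$ with $P(\Omega_{I})$.

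First, Theorem \eqref{mainiwahori} places $P$ in the Bernstein center $\mathcal{Z}(\Gk)$, so by Schur's Lemma $\pi(P)$ is a scalar for any irreducible $(\pi,V_{\pi})$. For $\pi$ with $V_{\pi}^{\Gk_{C_{0}}}\neq 0$, I would apply part (iii) of the preceding Proposition, namely $P\star e_{C_{0}}=e_{C_{0}}$. Since $\pi(e_{C_{0}})$ is the (nonzero) projection onto $V_{\pi}^{\Gk_{C_{0}}}$, the identity $\pi(P)\,\pi(e_{C_{0}})=\pi(e_{C_{0}})$ forces the scalar $\pi(P)$ to equal $1$.

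For the opposite case, where $\pi$ has no Iwahori-fixed vector, the key step is to show $\pi(e_{E})=0$ for \emph{every} facet $E$ of $\ScptB(\Gk)$. By the construction \eqref{iwahoriidempotent}, $e_{E}$ is the inflation to $\Gk_{E}$ of the central idempotent $e_{\BFq,\frcL}$ of $\GFq=\Gk_{E}/\Gk^{+}_{E}$, attached to the Harish--Chandra cuspidal class $\frcL=(\BFq,1_{\BFq})$ for $\BFq=\Gk_{F}/\Gk^{+}_{E}$, where $F$ is a chamber containing $E$. The image of $\pi(e_{E})$ consists of $\Gk^{+}_{E}$-invariant vectors lying in $\GFq$-isotypic components for representations $\rho$ with $\rho^{\BFq}\neq 0$. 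A nontrivial such component would inflate to a $\Gk_{F}$-fixed vector in $V_{\pi}$, contradicting the hypothesis since all Iwahori subgroups of $\Gk$ are conjugate. Hence $\pi(e_{E})=0$.

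To conclude, fix $v\in V_{\pi}$ and choose a compact open subgroup $J\subset\Gk$ with $v\in V_{\pi}^{J}$. By the essential compactness of $P$ (Theorem \eqref{mainiwahori}), together with the vanishing result of Theorem \eqref{dplus}, the convolution $P\star e_{J}$ reduces to a finite alternating sum $\sum_{E\in S}(-1)^{\dim(E)}\,e_{E}\star e_{J}$. Applying $\pi$ yields $\pi(P)\,v=\pi(P\star e_{J})\,v=\sum_{E\in S}(-1)^{\dim(E)}\pi(e_{E})\pi(e_{J})\,v=0$, and since $v$ was arbitrary, the scalar $\pi(P)$ vanishes. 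I do not anticipate any serious obstacle: the only genuinely new input is the third paragraph, where the cuspidal class $\frcL=(\BFq,1_{\BFq})$ is seen to be precisely engineered so that $\pi(e_{E})$ detects the presence of Iwahori-fixed vectors in $\pi$; everything else is a formal consequence of the Bernstein center property together with the two identities already established.
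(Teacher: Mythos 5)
Your proposal is correct and follows essentially the same route as the paper's own proof: Schur's Lemma for the scalar $\pi(P)$, the identity $P \star e_{C_{0}} = e_{C_{0}}$ to force the scalar to be $1$ when $V_{\pi}^{\Gk_{C_{0}}} \neq 0$, and the observation that a nonzero $\pi(e_{E})$ would produce an Iwahori-fixed vector (since the constituents $\rho$ of $e_{E}$ have Borel-fixed vectors, hence $\Gk_{F}$-fixed vectors) to force the scalar to be $0$ otherwise. Your final paragraph merely spells out, via the finite truncation of $P \star e_{J}$, the step the paper states as "it follows that the scalar $\pi(P)$ is zero," so there is no substantive difference.
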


\medskip

\begin{proof}  \quad Suppose $(\pi , V_{\pi})$ is an irreducible smooth representation of $G$.  The operator $\pi (P)$ is a scalar operator.

\medskip

For any facet $E \subset \ScptB$, the operator $\pi (e_{E})$ projects to the subspace $V_{\pi}^{\Gk_{E,0^{+}}}$.  Furthermore, from the definition of $e_{E}$, any nonzero irreducible representation of $\Gk_{E,0}/\Gk_{E,0^{+}}$ in $V_{\pi}^{\Gk_{E,0^{+}}}$ must have a nonzero Iwahori fixed vector for any Iwahori subgroup $\Gk_{D}$ contained in $\Gk_{E}$.  It follows that if $V_{\pi}^{\Gk_{D}} = \{ 0 \} $ for any chamber $D$, then the scalar $\pi (P)$ is zero, i.e., a necessary condition for $\pi (P)$ to be nonzero is that  $\pi$ has an nonzero Iwahori fixed vector. 

\medskip

On the other hand, if there is a chamber $D$, so that $V_{\pi}^{\Gk_{D}} \neq \{ 0 \} $, then  $V_{\pi}^{\Gk_{C_{0}}} \neq \{ 0 \} $ too.  Since $P \star e_{C_{0}} = e_{C_{0}}$, we conclude $\pi (P) = \myId_{V_{\pi}}$, and thus $P$ is the Bernstein projector for the component with nonzero Iwahori fixed vectors.

\end{proof}

 
\vskip 0.70in 
 

\section{General depth zero}\label{generaldepthzero}

\medskip

\subsection{Preliminaries} \quad   Suppose $F$ is a facet of $\ScptB$.  Let $\Gk_{F}$ be the parahoric subgroup attached to $F$.  The quotient $\Gk_{F}/\Gk_{F}^{+}$ is the group of $\bF_{q}$-rational points of a reductive linear connected group.  We want to take a cuspidal representation $\sigma$ of  $\Gk_{F}/\Gk_{F}^{+}$, and use the corresponding idempotent $e_{\sigma}$ of $\Gk_{F}$ to define a Bernstein component of $\Gk$.  In order to do this we recall some preliminaries.

\medskip

To a parahoric subgroup $\Gk_{F}$, we can attach a Levi subgroup $\Mk \subset \Gk$.  We recall some results from section 6.2 in [{\reMPb}].    We take a maximal split $\mk$-torus $\Sk$ so that the apartment $\ScptA (\Sk )$ contains the facet $F$.  Then $\Sk$ gives rise to a maximal split $\bF_{q}$-torus in $\Gk_{F}/\Gk^{+}_{F}$.  We take the unique $\mk$-subtorus $\Ck$ of $\Sk$ so that $(\Ck \cap \Gk_{F}) / (\Ck \cap \Gk^{+}_{F})$ is the center of $\Gk_{F}/\Gk^{+}_{F}$.  The centralizer
\begin{equation}\label{attachedlevi}
\Mk = Z_{\Gk}(\Ck) \ \ \begin{cases}
\begin{tabular}{p{3.9in}}
is a Levi subgroup with center $\Ck$, and $\Mk_{F} = \Mk \cap \Gk_{F}$ is a maximal parahoric of $\Mk$, and  $\Gk_{F}/\Gk^{+}_{F} = \Mk_{F}/\Mk^{+}_{F}$.\\
\end{tabular}
\end{cases}
\end{equation}

\smallskip

\noindent{The} Levi subgroup $\Mk$ is defined up to a conjugation by $\Gk_{F}$.  Let $\Fk_{F} = N_{\Mk}(\Mk_{F}) \ (\supset \Ck)$ denote the normalizer of $\Mk_{F}$ in $\Mk$.  The quotient group $\Fk_{F}/\Ck$ is compact.  

\bigskip

We view an irreducible cuspidal representation $\sigma$ of the finite field group $\Gk_{F}/\Gk^{+}_{F}$ as also one of $\Mk_{F}/\Mk^{+}_{F}$, and we continue to write $\sigma$ for its inflation to an irreducible representation of $\Mk_{F}$.  Let
\smallskip
\begin{equation}\label{sigmaextension}
{\mathcal E}(\sigma ) \ = \ \begin{cases}
\begin{tabular}{p{3.9in}}
the collection of those irreducible representations of $\Fk_{F}$, up to equivalence, whose restrictions to $\Mk_{F}$ contains $\sigma$. \\
\end{tabular}
\end{cases}
\end{equation}
\medskip
\noindent{Define} $\tau_{1}, \, \tau_{2} \, \in {\mathcal E}(\sigma )$ to be equivalent if there is an unramified character $\chi$ of $\Mk$ so that $\tau_{2} = \tau_{1} \otimes \chi_{|_{\Fk_{F}}}$.  This equivalence relation partitions the collection ${\mathcal E}(\sigma )$ into finitely many equivalence classes.  We recall from [{\reMPb}]:

\bigskip

\begin{prop}\label{compactinductiona} \ (Proposition 6.6 in [{\reMPb}]) \quad \ Suppose $F \subset \ScptB (\Gk )$ is a facet and $\Mk$ is a Levi subgroup attached to $F$ as in \eqref{attachedlevi}, so $\Gk_{F}/\Gk^{+}_{F} = \Mk_{F}/\Mk^{+}_{F}$, and suppose $\sigma$ is the inflation to $\Mk_{F}$ of an irreducible cuspidal representation of $\Mk_{F}/\Mk^{+}_{F}$.
\smallskip
\begin{itemize}
\item[$\bullet$] Given $\tau \in {\mathcal E}(\sigma )$, the representation ${{\text{\rm{c-Ind}}}}^{\Mk}_{\Fk_{F}}(\tau )$ is an cuspidal representation of $\Mk$. 
\smallskip
\item[$\bullet$] Suppose $(\pi, V_{\pi})$ is an irreducible smooth representation of $\Mk$ which contains $\sigma$ upon restriction to $\Mk_{F}$.  Then $\pi$ is equivalent to ${{\text{\rm{c-Ind}}}}^{\Mk}_{\Fk_{F}}(\tau )$ for some $\tau \in {\mathcal E}(\sigma )$.
\end{itemize}
\end{prop}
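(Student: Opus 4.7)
The plan is to handle the two bullets in sequence: the first (irreducibility and cuspidality of the compactly induced representation) is the technical core, while the second is a quick Frobenius reciprocity consequence. A preliminary observation to record is that $\Mk_F \Ck$ has finite index in $\Fk_F$, since $\Fk_F/\Ck$ is compact and $\Mk_F\Ck/\Ck$ is open; in particular every $\tau \in \mathcal{E}(\sigma)$ is finite dimensional, and $\pi_\tau := \text{c-Ind}^{\Mk}_{\Fk_F}(\tau)$ is admissible by general facts about compact induction from an open subgroup which is cocompact modulo $\Ck$.

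For the first bullet, cuspidality falls out of the matrix coefficient criterion: any coefficient $g \mapsto \langle \pi_\tau(g)\, f_1, \widetilde f_2 \rangle$ is supported, up to the $\Ck$-action, inside a set $K_1 \Fk_F K_2$ with $K_i \subset \Mk$ compact, which is compact modulo $\Ck$, so $\pi_\tau$ is cuspidal modulo the split center. For irreducibility, Frobenius reciprocity gives
\begin{equation*}
\End_\Mk(\pi_\tau) \ = \ \Hom_{\Fk_F}\bigl(\tau,\ \pi_\tau|_{\Fk_F}\bigr),
\end{equation*}
and Mackey decomposes the right-hand side over the $\Fk_F$-double cosets of $\Mk$. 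The identity coset contributes $\End_{\Fk_F}(\tau) = \mathbb{C}$. For any other representative $g \in \Mk \setminus \Fk_F$, the facets $F$ and $gF$ are distinct inside $\ScptB_\Gk(\Mk)$, and under the hypothesis $(\Mk \cap \Gk_F)/(\Mk \cap \Gk_F^+) = \Gk_F/\Gk_F^+$ the image of $\Mk_F \cap g \Mk_F g^{-1}$ in $\Mk_F/\Mk_F^+$ is contained in a proper parabolic, hence meets a nontrivial unipotent radical. Cuspidality of $\sigma$ then kills the corresponding Hom space, so $\End_\Mk(\pi_\tau) = \mathbb{C}$ and $\pi_\tau$ is irreducible.

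For the second bullet, Frobenius reciprocity combined with induction in stages yields
\begin{equation*}
0 \ \neq \ \Hom_{\Mk_F}(\sigma,\ \pi) \ = \ \Hom_\Mk\bigl(\text{c-Ind}^\Mk_{\Fk_F}(\text{Ind}^{\Fk_F}_{\Mk_F}(\sigma)),\ \pi\bigr).
\end{equation*}
Because $\Mk_F\Ck$ is normal of finite index in $\Fk_F$, Clifford theory decomposes $\text{Ind}^{\Fk_F}_{\Mk_F}(\sigma)$ into a finite sum of isotypic components labelled by representatives of $\mathcal{E}(\sigma)$; consequently some $\tau \in \mathcal{E}(\sigma)$ admits a nonzero $\Mk$-map $\pi_\tau \to \pi$, and irreducibility of both sides (by the first bullet) forces it to be an isomorphism.

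The main obstacle is the off-diagonal vanishing in the Mackey sum. Its content is the geometric fact, built into the Moy--Prasad hypothesis on $F$, that any $g \in \Mk \setminus \Fk_F$ carries $F$ to a genuinely distinct facet of the reduced building of $\Mk$, so the parahoric intersection reduces modulo $\Mk_F^+$ to a proper parabolic of the finite reductive quotient $\Mk_F/\Mk_F^+$; Harish-Chandra cuspidality of $\sigma$ then extinguishes all off-diagonal intertwiners. Packaging this geometric input cleanly is the work that is carried out in [{\reMPb}].
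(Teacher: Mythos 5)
The paper does not actually prove this statement: it is quoted verbatim as Proposition 6.6 of Moy--Prasad [{\reMPb}], so there is no in-paper argument to compare yours against. Your reconstruction follows the standard route of that reference (trivial intertwining outside $\Fk_{F}$ via Mackey theory plus Bruhat--Tits geometry for the first bullet, Frobenius reciprocity for the second), and the outline is sound; you are also right that the real content is the geometric input you defer to [{\reMPb}]. One logical quibble: admissibility of ${\text{\rm{c-Ind}}}^{\Mk}_{\Fk_{F}}(\tau)$ is not a ``general fact'' about compact induction from open, compact-mod-center subgroups (compact induction of the trivial representation of an Iwahori subgroup is not admissible); it is a \emph{consequence} of the irreducibility you establish via the intertwining computation, so the order of deductions should be reversed.

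Two steps need tightening. First, in the off-diagonal vanishing, ``the image of $\Mk_{F}\cap g\Mk_{F}g^{-1}$ is contained in a proper parabolic, hence meets a nontrivial unipotent radical'' is weaker than what the argument needs: cuspidality of $\sigma$ kills invariants under the \emph{full} unipotent radical of a proper parabolic, not under an arbitrary nontrivial unipotent subgroup. The fact actually used is that the image of $\Mk_{F}\cap g\Mk^{+}_{F}g^{-1}$ in $\Mk_{F}/\Mk^{+}_{F}$ contains the unipotent radical $U$ of a proper parabolic while ${}^{g}\sigma$ is trivial on it, so every intertwiner factors through the $U$-invariants of $\sigma$, which vanish. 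Second, in the last bullet, ${\text{\rm{Ind}}}^{\Fk_{F}}_{\Mk_{F}}(\sigma )$ is \emph{not} a finite sum of isotypic components: $\Fk_{F}/\Mk_{F}\Ck$ is finite, but $\Ck /(\Ck\cap\Mk_{F})$ is a lattice, so ${\mathcal E}(\sigma )$ is infinite (a finite union of unramified-twist orbits) and every $\tau\in{\mathcal E}(\sigma )$ occurs in that induced representation. The standard repair: since $\pi$ is irreducible, hence admissible, the space $V_{\pi}^{\Mk^{+}_{F}}$ is finite dimensional and $\Fk_{F}$-stable ($\Fk_{F}$ normalizes $\Mk_{F}$, hence $\Mk^{+}_{F}$); choose an irreducible $\Fk_{F}$-subrepresentation $\tau$ of its $\sigma$-isotypic part, note $\tau\in{\mathcal E}(\sigma )$, and apply Frobenius reciprocity to obtain a nonzero map ${\text{\rm{c-Ind}}}^{\Mk}_{\Fk_{F}}(\tau )\rightarrow\pi$, which is an isomorphism by the first bullet. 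With these repairs your argument is the one in [{\reMPb}].
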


\medskip

\begin{prop}\label{compactinductionb} \ (Proposition 5.3 in [{\reMPb}]) \quad Suppose:
  \begin{itemize}
  \item[$\bullet$] $F$ and $E$ are two facets of $\ScptB (\Gk)$, and $\tau$ and $\kappa$ are irreducible representations of $\Gk_{F}$ and $\Gk_{E}$ inflated from cuspidal representations of $\Gk_{F}/\Gk^{+}_{F}$ and $\Gk_{E}/\Gk^{+}_{E}$ respectively.
  \item[$\bullet$] $(\pi ,V_{\pi})$ is a smooth irreducible representation of $\Gk$ so that $\tau$ and $\kappa$ appear in the restriction of $\pi$ to  $\Gk_{F}$ and $\Gk_{E}$ respectively.
  \end{itemize}
Then, there exists $g \in G$ so that $\Gk_{F} \cap \Gk_{gE}$ surjects onto both  $\Gk_{F} / \Gk^{+}_{F}$ and $\Gk_{gE} / \Gk^{+}_{gE}$, and $\kappa = \tau \circ {\Ad (g)}$.
\end{prop}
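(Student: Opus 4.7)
The plan is to reduce the statement to an intertwining problem between cuspidal representations of finite reductive groups by means of Frobenius reciprocity combined with Mackey's formula, and then to exploit cuspidality to force the parahoric intersection to surject onto both reductive quotients.

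By Frobenius reciprocity for the compact open subgroup $\Gk_F$, the embedding $\tau \hookrightarrow \pi|_{\Gk_F}$ furnishes a nonzero $\Gk$-equivariant surjection $\text{c-Ind}_{\Gk_F}^{\Gk}\tau \twoheadrightarrow \pi$. Composing with $\Hom_{\Gk_E}(\kappa,\pi) \ne 0$ yields
\[
\Hom_{\Gk_E}\!\bigl(\kappa,\; \text{c-Ind}_{\Gk_F}^{\Gk}\tau\bigr) \;\neq\; 0.
\]
Decomposing the restriction to $\Gk_E$ via Mackey's formula over double cosets $\Gk_E \backslash \Gk / \Gk_F$ produces a representative $g \in \Gk$ with
\[
\Hom_{H}\!\bigl(\kappa,\; {}^g\tau\bigr) \;\neq\; 0,
\]
where $H := \Gk_E \cap \Gk_{gF}$ and $({}^g\tau)(x) := \tau(g^{-1}xg)$ is regarded as a representation of $\Gk_{gF} = g\Gk_F g^{-1}$.

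Now let $\overline{H}_E$ denote the image of $H$ in $\Gk_E/\Gk_E^+$. By the Bruhat--Tits/Moy--Prasad theory of parahoric intersections, $\overline{H}_E$ is a parabolic subgroup $\overline{P}_E$ of $\Gk_E/\Gk_E^+$, with unipotent radical $\overline{U}_E$ equal to the image of $H \cap \Gk_{gF}^+$. Since ${}^g\tau$ is trivial on $\Gk_{gF}^+$, the intertwiner factors through a nonzero $\overline{P}_E/\overline{U}_E$-map $\kappa_{\overline{U}_E} \to \tilde\tau$, the coinvariants being taken for the $\overline{U}_E$-action. Were $\overline{P}_E$ a proper parabolic of $\Gk_E/\Gk_E^+$, the coinvariants $\kappa_{\overline{U}_E}$ would vanish by cuspidality of $\kappa$, contradicting non-vanishing of the intertwiner. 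Hence $H$ surjects onto $\Gk_E/\Gk_E^+$, and the symmetric argument---invoking cuspidality of $\tau$ in place of $\kappa$---shows $H$ also surjects onto $\Gk_{gF}/\Gk_{gF}^+$. After relabeling $g \leftrightarrow g^{-1}$ to match the asymmetric form of the statement, $\Gk_F \cap \Gk_{g^{-1}E}$ surjects onto both quotients, and the surviving intertwining descends through the common finite reductive quotient of $H$ to the required identification $\kappa = \tau \circ \Ad(g)$.

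The principal obstacle lies in the structural claim invoked above---that the image of $\Gk_E \cap \Gk_{gF}$ in $\Gk_E/\Gk_E^+$ is a parabolic subgroup with unipotent radical precisely the image of $\Gk_E \cap \Gk_{gF}^+$. This is not automatic: it requires a reduction to a common apartment containing representatives of the classes of $E$ and $gF$, and an explicit analysis of the contributions of the relevant affine root subgroups on either side. This combinatorial input, essentially the content of sections 5--6 of [{\reMPb}], is where the geometry of the Bruhat--Tits building enters the argument in an essential way.
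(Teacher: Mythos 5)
The paper does not prove this proposition at all: it is quoted verbatim as Proposition~5.3 of [\reMPb] (Moy--Prasad), so there is no internal argument to compare against. Your reconstruction is essentially the standard proof of that result: Frobenius reciprocity and Mackey theory reduce the intertwining of $\pi$ with the two inflated cuspidal types to a nonzero $H$-intertwiner for $H=\Gk_{E}\cap\Gk_{gF}$, and the theorem that the image of $H$ in $\Gk_{E}/\Gk_{E}^{+}$ is a parabolic subgroup whose unipotent radical is the image of $\Gk_{E}\cap\Gk_{gF}^{+}$ lets cuspidality kill every double coset for which that parabolic is proper. You correctly flag that this last structure theorem is the genuine building-theoretic content and is exactly what is established in [\reMPb]; taking it as input, your argument is sound. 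One small point worth making explicit at the end: once both surjectivity statements hold, the same structure theorem gives $H\cap\Gk_{E}^{+}=H\cap\Gk_{gF}^{+}$ (each unipotent radical is trivial, so each intersection with one pro-unipotent radical lands in the other), which is what canonically identifies the two reductive quotients and turns the surviving nonzero intertwiner between irreducibles into the asserted equality $\kappa=\tau\circ\Ad(g)$ after the relabeling $g\leftrightarrow g^{-1}$.
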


\medskip 

\begin{prop}\label{compactinductionc}  \ (Proposition 6.2 in [{\reMPb}]) \quad Suppose $F, \, E \, \subset \ScptB (\Gk )$ are facets so that $\Gk_{F} \cap \Gk_{E}$ surjects onto $\Gk_{F}/\Gk^{+}_{F}$ and $\Gk_{E}/\Gk^{+}_{E}$ respectively, and $\sigma$ is an irreducible cuspidal representation of $\Gk_{F}/\Gk^{+}_{F} = \Gk_{E}/\Gk^{+}_{E}$. \  Let $\sigma_{F}$ and $\sigma_{E}$ denote respectively the inflation of $\tau$ to  $\Gk_{F}$ and $\Gk_{E}$.  \ If $(\pi, V_{\pi})$ is an irreducible smooth representation of $\Gk$ which contains $\sigma_{F}$ upon restriction to $\Gk_{F}$, then $\pi$ also contains $\sigma_{E}$ upon restriction to $\Gk_{E}$. 
\end{prop}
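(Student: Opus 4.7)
Set $H := \Gk_F \cap \Gk_E$. The strategy is to start from a $\sigma_F$-typic vector in $V_\pi$, project it to the $\Gk_E^+$-fixed subspace, and argue that the result is a nonzero $\sigma_E$-typic vector. All three moving pieces --- a canonical identification of reductive quotients through $H$, an $H$-equivariance of the projector $\pi(e_{\Gk_E^+})$, and a nonvanishing assertion --- have to be assembled; the last is the only nonformal part.

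The preliminary observation is that the two surjections $H \twoheadrightarrow \Gk_F/\Gk_F^+$ and $H \twoheadrightarrow \Gk_E/\Gk_E^+$ share a common kernel $H^+ := \Gk_F^+ \cap \Gk_E = \Gk_F \cap \Gk_E^+$, since both are normal pro-$p$ subgroups of $H$ of the same finite index $|\GFq|$, and the identification $\Gk_F/\Gk_F^+ = \Gk_E/\Gk_E^+$ built into the hypothesis forces the two kernels to coincide. Consequently $\Gk_E = H \cdot \Gk_E^+$ with $H \cap \Gk_E^+ = H^+$, so any $\Gk_E^+$-fixed vector on which $H$ acts by $\sigma$ through $H/H^+ = \GFq$ automatically generates a copy of $\sigma_E$ as a $\Gk_E$-representation. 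Now pick a nonzero $v \in V_\pi$ generating a $\Gk_F$-subrepresentation isomorphic to $\sigma_F$; then $v \in V_\pi^{\Gk_F^+} \subset V_\pi^{H^+}$, and viewing $V_\pi^{H^+}$ as an $H$-representation through $H/H^+ = \GFq$, the vector $v$ sits in the $\sigma$-isotypic component. Set
\[
w \;:=\; \pi(e_{\Gk_E^+})\, v \;\in\; V_\pi^{\Gk_E^+}.
\]
Because $H \subset \Gk_E$ normalizes $\Gk_E^+$, the projector $\pi(e_{\Gk_E^+})$ commutes with $\pi(h)$ for every $h \in H$, so $w$ also lies in the $\sigma$-isotypic component; if $w \neq 0$, the previous observation finishes the proof.

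The heart of the argument is therefore the nonvanishing $w \ne 0$, and this is where cuspidality of $\sigma$ is used. I would attack it by the Hecke-algebra identity
\[
(e_{\sigma_F} \star e_{\sigma_E})\big|_H \;=\; c \cdot e^H_\sigma, \qquad c \neq 0,
\]
where $e^H_\sigma$ is the $\sigma$-character idempotent of $H$; this identity is a direct integration using the identification of reductive quotients together with the Schur projector formula $\chi_\sigma \star \chi_\sigma = (|\GFq|/\dim \sigma)\, \chi_\sigma$ in $\GFq$. Assuming $w = 0$ gives $\pi(e_{\sigma_E})\, v = 0$, hence $\pi(e_{\sigma_F} \star e_{\sigma_E})\, v = 0$; combined with $\pi(e^H_\sigma)\, v = v \neq 0$, this yields a contradiction once one knows the contributions to $e_{\sigma_F} \star e_{\sigma_E}$ supported on $\Gk_F \Gk_E \setminus H$ cannot cancel the $H$-part. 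The main obstacle is this last bookkeeping: it is precisely where cuspidality of $\sigma$ must be invoked to kill the off-$H$ contributions (via vanishing of the relevant matrix coefficients of $\sigma$ on the extra double cosets), and it is the only step that is not essentially formal.
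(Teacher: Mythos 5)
The paper does not actually prove this statement: it is quoted verbatim as Proposition~6.2 of [\reMPb] (Moy--Prasad, \emph{Jacquet functors and unrefined minimal K-types}), so there is no in-paper argument to compare yours against. Judged on its own terms, your proposal correctly carries out all the formal reductions: the identification of the two reductive quotients through $H=\Gk_F\cap\Gk_E$, the factorization $\Gk_E=H\,\Gk^{+}_{E}$, the $H$-equivariance of $\pi(e_{\Gk^{+}_{E}})$, and the observation that a nonzero $\sigma$-isotypic vector in $V_\pi^{\Gk^{+}_{E}}$ generates a copy of $\sigma_E$. (One small repair: ``same finite index'' does not by itself force $H\cap\Gk^{+}_{F}=H\cap\Gk^{+}_{E}$; the correct reason is that each is a normal pro-$p$ subgroup of $H$ with quotient the finite reductive group $\GFq$, which has no nontrivial normal $p$-subgroup, so each equals the unique maximal normal pro-$p$ subgroup of $H$.)

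The genuine gap is the step you yourself flag: the nonvanishing $w=\pi(e_{\Gk^{+}_{E}})v\neq 0$. After your reductions this nonvanishing \emph{is} the proposition, so deferring it means the proof is not yet a proof. Your proposed route --- computing $(e_{\sigma_F}\star e_{\sigma_E})|_{H}=c\,e^{H}_{\sigma}$ and then arguing that the contributions supported on $\Gk_F\Gk_E\setminus H$ ``cannot cancel'' --- identifies the right object but stops exactly where the work begins. What must actually be shown is a double-coset analysis: decompose $\Gk_F\Gk_E$ into $(\Gk^{+}_{F},\Gk^{+}_{E})$-double cosets, and prove that each coset other than the identity one contributes zero to $\pi(e_{\sigma_F}\star e_{\sigma_E})v$. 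This is where the hypothesis that $H$ surjects onto both quotients enters geometrically (it forces $F$ and $E$ to span the same affine subspace of a common apartment, so the discrepancy between $\Gk^{+}_{E}$ and $\Gk^{+}_{F}$ is built from root groups whose images in $\Gk_F/\Gk^{+}_{F}$ form the unipotent radical of a \emph{proper} parabolic), and where cuspidality of $\sigma$ is used in the precise form $\sum_{u\in\UFq}\sigma(xu)=0$ for such a radical $\UFq$. Without carrying out that computation --- which is the content of [\reMPb, \S 6] --- the claim that the off-$H$ part does not cancel the $H$-part is an assertion, not an argument; note also that what one proves there is the stronger statement that each extra coset contributes $0$, not merely that the total does not equal $-cv$.
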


Recall from section 3.4 in [{\reMPb}], two parahoric subgroups $\Gk_{F}$ and $\Gk_{E}$ are associate if there exists $g \in \Gk$ so that $( \, \Gk_{F} \cap \Gk_{gE} \, )$ surjects onto both  $\Gk_{F} / \Gk^{+}_{F}$ and $\Gk_{gE} / \Gk^{+}_{gE}$.  In this situation, we get  $\Gk_{F} / \Gk^{+}_{F} = \Gk_{gE} / \Gk^{+}_{gE}$.   If $E$ is a facet in an apartment $\ScptA$, let $\ScptA \ScptS({\ScptA}, E)$ denote the minimal affine subspace of $\ScptA$ which contains $E$.  It is equal to the intersection of all the affine hyperplanes $H_{\pm \psi}$ which contain $E$.  We observe that if $E$ and $F$ belong to $\ScptA$, then  $\Gk_{F} \cap \Gk_{E}$ surjects onto both  $\Gk_{F} / \Gk^{+}_{F}$ and $\Gk_{E} / \Gk^{+}_{E}$ precisely when $\ScptA \ScptS({\ScptA}, E) \, = \, \ScptA \ScptS({\ScptA}, F)$.

\begin{lemma} \

\begin{itemize}
\item[$\bullet$] Associativity of parahoric subgroups in $\Gk$ is an equivalence relation.
\item[$\bullet$] If $F, \, E \, \subset \ScptB (\Gk )$ are associate facets, then the Levi subgroups attached to them by the above procedure are conjugate in $\Gk$.
\end{itemize}
\end{lemma}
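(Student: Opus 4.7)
The plan is to verify associativity satisfies reflexivity, symmetry, and transitivity, then deduce the Levi statement by tracking a single subtorus. Reflexivity is immediate with $g=1$, and symmetry follows by conjugating by $g^{-1}$: if $\Gk_F \cap \Gk_{gE}$ surjects onto both $\Gk_F/\Gk^+_F$ and $\Gk_{gE}/\Gk^+_{gE}$, then $\Gk_{g^{-1}F} \cap \Gk_E$ surjects onto both $\Gk_{g^{-1}F}/\Gk^+_{g^{-1}F}$ and $\Gk_E/\Gk^+_E$, witnessing $\Gk_E \sim \Gk_F$ via $g^{-1}$.

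For transitivity, the key tool is the observation recorded immediately before the statement: inside a single apartment $\ScptA$, the intersection $\Gk_{F'} \cap \Gk_{E'}$ surjects onto both quotients exactly when $\ScptA \ScptS(\ScptA, F') = \ScptA \ScptS(\ScptA, E')$. I would combine this with the standard Bruhat--Tits fact that the surjection hypothesis forces $F'$ and $E'$ into a common apartment (a maximal split torus of the reductive quotient $\Gk_{F'}/\Gk^+_{F'}$ lifts through $\Gk_{F'} \cap \Gk_{E'}$ to a maximal split torus of $\Gk$ contained in both parahorics, and the apartment attached to that torus contains both facets). Given $\Gk_F \sim \Gk_E$ via $g_1$ and $\Gk_E \sim \Gk_D$ via $g_2$, I would apply symmetry to place $F$ and a conjugate of $E$ into a common apartment $\ScptA_1$ with equal minimal affine subspaces and, separately, $E$ and $g_2 D$ into an apartment $\ScptA_2$ with equal minimal affine subspaces. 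Since $\Gk_E$ acts transitively on the apartments through $E$, an element $h \in \Gk_E$ carries $\ScptA_2$ to $\ScptA_1$; then $h g_2 D$ lies in $\ScptA_1$ and $\ScptA \ScptS(\ScptA_1, h g_2 D) = \ScptA \ScptS(\ScptA_1, E) = \ScptA \ScptS(\ScptA_1, F)$, yielding $\Gk_F \sim \Gk_D$ via $h g_2$ (modulo the preliminary conjugations tracked through the chain).

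For the second bullet, I would use the conjugation witnessing associativity to move $E$ into a fixed apartment $\ScptA = \ScptA(\Sk)$ containing $F$, with $\ScptA \ScptS(\ScptA, F) = \ScptA \ScptS(\ScptA, E)$. The subtorus $\Ck \subset \Sk$ defining the attached Levi is characterized intrinsically as the identity component of the intersection of kernels of the $\Sk$-roots that are nonconstant on $\ScptA \ScptS(\ScptA, F)$; equivalently, $(\Ck \cap \Gk_F)/(\Ck \cap \Gk^+_F)$ is the center of $\Gk_F/\Gk^+_F$. Because the minimal affine subspace coincides for $F$ and $E$, the same subtorus $\Ck$ works for both, so $\Mk = Z_{\Gk}(\Ck)$ is simultaneously an attached Levi for $F$ and for $E$. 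Undoing the initial conjugation exhibits the original attached Levis as $\Gk$-conjugate.

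The main obstacle I anticipate is the Bruhat--Tits common-apartment step: cleanly justifying that the surjection hypothesis forces $F$ and $E$ into one apartment with equal minimal affine subspaces. While this is standard material, it will deserve an explicit reference to [\reMPb] and carries most of the technical weight; once it is in hand, the equivalence relation axioms and the identification of the subtorus $\Ck$ follow formally from the combinatorics of affine flats in $\ScptA$.
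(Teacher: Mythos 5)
Your argument follows the paper's proof in all essentials: transitivity is obtained by moving everything into a single apartment via the transitive action of a pointwise stabilizer on the apartments through a facet (the paper uses the Iwahori subgroup $\Gk_{C}$ of a chamber $C$ containing the middle facet, you use $\Gk_{E}$ itself; both are standard) and then chaining equalities of the minimal affine subspaces $\ScptA \ScptS (\ScptA , \cdot )$; the Levi statement is deduced from the fact that the subtorus $\Ck$ is determined by that affine subspace. Two remarks. First, the step you single out as carrying the technical weight --- forcing $F$ and $E$ into a common apartment --- is free: in a building any two facets already lie in a common apartment, so no torus-lifting argument is needed. The actual content of the surjection hypothesis is the equality of the affine spans inside such an apartment, which is exactly the observation recorded just before the statement, and which both you and the paper take as given.

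Second, your intrinsic characterization of $\Ck$ is stated backwards. Since $\Ck$ lifts the center of $\Gk_{F}/\Gk^{+}_{F}$, and the roots of that finite group are the gradients $\mygrad (\psi )$ of the affine roots $\psi$ vanishing on $F$ (equivalently on $\ScptA \ScptS (\ScptA , F)$), the torus $\Ck$ is the identity component of the intersection of the kernels of those gradients --- roots that are \emph{constant} on $\ScptA \ScptS (\ScptA , F)$ --- not of the roots that are nonconstant there. As literally written, your recipe gives for $F$ a chamber the trivial torus (every root is nonconstant on the whole apartment) and hence $\Mk = \Gk$, whereas the correct answer is $\Ck = \Sk$ and $\Mk = Z_{\Gk}(\Sk )$. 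This is a local slip rather than a structural one: the property you actually use --- that $\Ck$ depends only on $\ScptA \ScptS (\ScptA , F)$, so that associate facets with equal affine spans receive the same $\Ck$ and hence the same $\Mk$ up to the conjugation already in play --- holds for the correctly defined torus, and with that correction your proof of the second bullet coincides with the paper's.
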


\begin{proof}  \quad To prove the first statement, it suffices to prove transitivity.  Suppose facets $F_a$, and $F_b$ are associate.  Take $g \in \Gk$ so that  $(\Gk_{F_a} \cap \Gk_{gF_b})$ surjects onto $\Gk_{F_a} / \Gk^{+}_{F_a}$ and $\Gk_{gF_b} / \Gk^{+}_{gF_b}$.  This is equivalent to the equality of $\ScptA \ScptS({\ScptA}, F_{a} )$ and $\ScptA \ScptS({\ScptA}, F_{gb} )$ for any apartment $\ScptA$ containing both facets.  Similary, suppose $F_{b}$ and $F_{c}$ are associate.  This means there is an $h \in \Gk$ so that $( \, \Gk_{F_{b}} \cap \Gk_{hF_{c}} \, )$ surjects onto $\Gk_{F_{b}} / \Gk^{+}_{F_{b}}$ and $\Gk_{hF_{c}} / \Gk^{+}_{hF_{c}}$.  Hence $( \, \Gk_{gF_{b}} \cap \Gk_{ghF_{c}} \, )$ surjects onto $\Gk_{gF_{b}} / \Gk^{+}_{gF_{b}}$ and $\Gk_{ghF_{c}} / \Gk^{+}_{ghF_{c}}$.  Choose a chamber $C$ in $\ScptA$ which contains the facet $gF_{b}$.  The Iwahori subgroup $\Gk_{C}$ acts transitively on the apartments containing $C$, so there is a $k \in \Gk_{C}$ satisfying $k (gh F_{c}) \, \subset \ScptA$.  In $\ScptA$, we have,

$$
  \ScptA \ScptS({\ScptA}, F_{a}) \, = \, \ScptA \ScptS({\ScptA}, (gF_{b})) \, = \, \ScptA \ScptS({\ScptA}, (kgF_{b})) \, = \, \ScptA \ScptS({\ScptA}, (kghF_{c})) \ ,
$$

\noindent{which} means $F_a$ and $F_c$ are associate.
  
  \smallskip

  The second assertion follows from the first.  This is because the Levi subgroup attached to $F$ is the centralizer $C_{\Gk}(\Zk )$ of a lift $\Zk \, ( \, \subset \Sk \, )$ of the central torus $\ZFq$ of the finite field group $\Gk_{F}/\Gk^{+}_{F}$.  If $E$ and $F$ are associate, we can assume   $( \, \Gk_{F} \cap \Gk_{E} \, )$ surjects onto $\Gk_{F} / \Gk^{+}_{F}$ and $\Gk_{E} / \Gk^{+}_{E}$, and thereby canonically identify the two, and therefore their central torus, and hence lift.  The assertion follows.
\end{proof}

\bigskip

We recall the equivalence relation in the data used to define a Bernstein component:   Suppose $\Mk$ is a parabolic $\mk$-subgroup of $\Gk$, and $\pi_{a}$ and $\pi_{b}$ are  two irreducible cuspidal representations of $\Mk$.  Define
\begin{equation}
  \aligned
  \ \qquad \pi_{a} \sim \pi_{b} \quad &
\begin{cases}
\begin{tabular}{p{3.9in}}
  when there is a $g \in N_{\Gk}(\Mk )$ so that the representation $\pi^{g}_{a} := \pi_{a} \circ \Ad(g)$ is isomorphic to the representation  $\pi_{b}$.  \\
\end{tabular}
\end{cases}
    \endaligned
\end{equation}

\begin{lemma} \quad Suppose $\Mk$ is a parabolic subgroup of $\Gk$, and $F, \, E \subset \ScptB (\Gk )$ are facets contained in $\ScptB (\Mk )$ so that $\Mk \cap \Gk_{F}$ and $\Mk \cap \Gk_{E}$ are maximal parahoric subgroups of $\Mk$ and 
$$
\pi_{a} = {\text{\rm{c-Ind}}}^{\Mk}_{\Fk_{F}} (\tau ) \qquad , \qquad \pi_{b} = {\text{\rm{c-Ind}}}^{\Mk}_{\Fk_{E}} (\kappa )
$$
\noindent{are} equivalent irreducible cuspidal representations ($\pi_{a} \sim \pi_{b}$).  Then, the facets $F, \, E $ are associate.   
\end{lemma}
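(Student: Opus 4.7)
The plan is to reduce the hypothesis $\pi_a \sim \pi_b$ to a literal $\Mk$-isomorphism $\pi_a \cong \pi_b$, and then apply Proposition~\ref{compactinductionb} inside the Levi $\Mk$ itself (viewed as a connected reductive $p$-adic group in its own right).

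First I would unwind the equivalence. By the definition of $\sim$ recalled just above the lemma, there exists $g \in N_{\Gk}(\Mk)$ with $\pi_a \circ \Ad(g) \cong \pi_b$ as smooth $\Mk$-representations. Since $g$ normalises $\Mk$, it acts on the extended building $\ScptB_{\Gk}(\Mk)$ and sends $\Mk$-parahorics to $\Mk$-parahorics, so $F' := g^{-1}F$ is again a facet of $\ScptB_{\Gk}(\Mk)$ with $\Mk \cap \Gk_{F'}$ a maximal parahoric of $\Mk$, and a direct calculation gives $\pi_a \circ \Ad(g) \cong {\text{\rm{c-Ind}}}^{\Mk}_{\Fk_{F'}}(\tau \circ \Ad(g))$. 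Because $F$ and $F'$ differ by the action of $g \in \Gk$, the pair $\{F, E\}$ is $\Gk$-associate iff $\{F', E\}$ is. Replacing $F$ by $F'$ and $\tau$ by $\tau \circ \Ad(g)$, I may therefore assume $\pi_a \cong \pi_b$ as $\Mk$-representations; denote this common representation by $\pi$.

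Next I would extract cuspidal data at $F$ and at $E$. By Frobenius reciprocity applied to $\pi \cong {\text{\rm{c-Ind}}}^{\Mk}_{\Fk_{F}}(\tau)$, the restriction $\pi|_{\Mk_F}$ contains $\tau|_{\Mk_F}$, which by definition of $\tau \in {\mathcal E}(\sigma)$ contains the inflation $\sigma_F$ of an irreducible cuspidal representation of $\Mk_F/\Mk_F^+$; analogously $\pi|_{\Mk_E}$ contains $\sigma_E$ inflated from $\Mk_E/\Mk_E^+$. I then apply Proposition~\ref{compactinductionb} to the connected reductive $\mk$-group $\Mk$, the smooth irreducible $\Mk$-representation $\pi$, and the facets $F, E$ of $\ScptB_{\Gk}(\Mk)$: this produces $h \in \Mk$ such that $\Mk_F \cap \Mk_{hE}$ surjects onto both $\Mk_F/\Mk_F^+$ and $\Mk_{hE}/\Mk_{hE}^+$. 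Via the canonical identifications $\Mk_F/\Mk_F^+ = \Gk_F/\Gk_F^+$ and $\Mk_{hE}/\Mk_{hE}^+ = \Gk_{hE}/\Gk_{hE}^+$ coming from \eqref{attachedlevi}, the inclusion $\Mk_F \cap \Mk_{hE} \subset \Gk_F \cap \Gk_{hE}$ promotes the two surjections to the full $\Gk$-parahoric quotients, so $F$ and $hE$, hence $F$ and $E$, are $\Gk$-associate.

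The main obstacle I foresee is purely one of bookkeeping: one must justify that Proposition~\ref{compactinductionb}, which is stated in the paper under the blanket hypothesis that $\mG$ is absolutely quasisimple, may legitimately be applied inside the Levi $\Mk$, which is only connected reductive. This is not a genuine difficulty since the cited source [MPb] proves the result for arbitrary connected reductive $p$-adic groups; but it is the one place where one must step outside the standing hypotheses of the paper. A secondary point to verify is that the twist in step one does not disturb the hypothesis that $\Mk$ is the Levi attached to the new facet $F'$ via \eqref{attachedlevi}; this is immediate because $g \in N_{\Gk}(\Mk)$ centralises the $\mk$-split torus $\Ck$ at the centre of that construction up to $N_{\Gk}(\Mk)$-conjugation, and the attached Levi is defined only up to $\Gk$-conjugacy.
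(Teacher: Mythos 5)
Your proof is correct and follows essentially the same route as the paper's: both first use the element $g \in N_{\Gk}(\Mk )$ to reduce the equivalence $\pi_{a} \sim \pi_{b}$ to a genuine $\Mk$-isomorphism of the two compactly induced cuspidal representations, and then invoke the Moy--Prasad rigidity of the inducing data to produce an $h \in \Mk$ placing the two facets in associate position. The only difference is presentational: the paper directly asserts $\Fk_{g^{-1}F} = h \Fk_{E} h^{-1}$ from the isomorphism, whereas you extract the same conclusion more explicitly via Frobenius reciprocity and Proposition \eqref{compactinductionb} applied inside $\Mk$ --- a defensible expansion of the same step.
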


\begin{proof} \quad We note that  $\pi_{a} \circ \Ad(g) = {\text{\rm{c-Ind}}}^{\Mk}_{g^{-1}\Fk_{F}g} (\tau \circ \Ad(g)) = {\text{\rm{c-Ind}}}^{\Mk}_{\Fk_{g^{-1}F}} (\tau \circ \Ad(g))$.  Thus, the hypothesis that $\pi^{g}_{a}$ and $\pi_{b}$ are isomorphic representations of $\Mk$ means there is $h \in \Mk$ so that $\Fk_{g^{-1}F} = h \Fk_{E}h^{-1}$ and $\kappa = \tau \circ \Ad(g) \circ \Ad (h)$.  In particular, this means the two facets $F , \, E \, \subset \, \ScptB (\Gk )$ \ are associate.
\end{proof}

\bigskip

\subsection{Bernstein components}  \quad  Suppose $F$ is a facet in $\ScptB (\Gk )$, and $\sigma$ is the inflation to $\Gk_{F}$ of an irreducible cuspidal representation of $\Gk_{F}/\Gk^{+}_{F}$.  Let $\Mk$ be a Levi subgroup as in \eqref{attachedlevi}.  As mentioned there, the group  $\Mk_{F} = (\Mk \cap \Gk_{F})$ is a maximal parahoric subgroup of $\Mk$.  We take $\tau \in {\mathcal E}(\sigma )$, and consider the irreducible cuspidal representation $\pi (\tau) = {\text{\rm{c-Ind}}}^{\Mk}_{\Fk_{F}}(\tau )$.  For any facet $L \subset \ScptB (\Gk )$, let

\begin{equation}
\aligned
  \Theta_{{\pi},L} \ :&= \ {\text{\rm{$\Gk_{L}$-character of the restriction of $\pi (\tau)$ to $\Gk_{L}$}}} \, , \\
          f_{\tau , L} \ :&= \  \Theta_{{\pi},L} \star e_{\Gk^{+}_{L}} \, .
\endaligned
\end{equation}

\smallskip

\noindent{The} function $f_{\tau , L}$ has an expansion in terms of characters of $\Gk_{L}/\Gk^{+}_{L}$:

\begin{equation}
  f_{\tau , L} \ = {\underset {\kappa \in \big( \Gk_{L}/\Gk^{+}_{L} \big)^{\widehat{\ }}} \sum} m(\kappa ) \ \Theta_{\kappa} \, .
\end{equation}

\smallskip

\noindent{We} note:

\smallskip

\begin{itemize}
\item[$\bullet$] Any two $\tau, \, \tau' \, \in \, {\mathcal E}(\sigma)$ (see \eqref{sigmaextension}) produce representations $\pi (\tau)$ and $\pi (\tau ')$ with the same $\Gk_{L}$ spectrum, so $f_{\tau , L}$ does not depend on which $\tau$ is used.
\smallskip
\item[$\bullet$] The process is clearly canonical and so produces a system of functions $f_{\tau , L}$ on the collection of parahoric subgroups of $\Gk$ which is $\Gk$-equivariant.  Set
\medskip  
  \begin{equation}\label{parahoricblock}
    {Bk}(L) \ := \ {\text{\rm{characters of $\Gk_{L}$  which appear in $f_{\tau , L}$}}} \, .
\end{equation}
\smallskip
\item[$\bullet$] Propositions \eqref{compactinductiona}, \eqref{compactinductionb} and \eqref{compactinductionc} say a necessary and sufficient  condition for the function $f_{\tau , L}$ to be nonzero is that $\Gk_{L}$ contains a parahoric subgroup $\Gk_{E}$ with $E$ associate to $F$, i.e., $L$ is contained in a facet $E$ associate to $F$.  In this situation, we define the idempotent $e_{\tau , L}$ as:
\begin{equation}\label{parahoricidemponent}
      e_{\tau , L} \ := \ {\frac{1}{\# (\Gk_{L}/\Gk^{+}_{L})}} \ {\underset {\Theta_{\kappa} \in {Bk}(L)} \sum} \deg (\kappa ) \, \Theta_{\kappa} \ \star \ e_{\Gk^{+}_{L}} \ .
\end{equation}
\smallskip    
\noindent{We} call this idemponent a Peter--Weyl idempotent.  This canonical construction clearly yields a $\Gk$-equivariant system of idempotents.

\smallskip

\item[$\bullet$]  Suppose $E$ is a facet associate to $F$, and facets $K$, $L$ satisfy $K \subset L \subset E$, i.e., $\Gk_{E}/\Gk^{+}_{K} \subset \Gk_{L}/\Gk^{+}_{K}$ are parabolic subgroups of $\Gk_{K}/\Gk^{+}_{K}$.   The finite field results say:

\smallskip
  
\begin{itemize}
  
\item[(i)]  $e_{\tau , K} \ \star \ e_{\Gk^{+}_{L}} \ = \ e_{\tau , L}$.

\item[(ii)]  Under the hypothesis $K \subsetneq J \subset E$, i.e., $\Gk_{J}/\Gk^{+}_{K}$ is a proper parabolic of $\Gk_{K}/\Gk^{+}_{K}$, with unipotent radical $\Gk^{+}_{J}/\Gk^{+}_{K} \neq \{ 1 \}$, then 

  \smallskip

\begin{equation}
\big( \ {\underset    {\text{\rm{$K \subset L \subset E$}}}  \sum}   
(-1)^{\dim (L)} e_{\tau , L} \ \big) \ \star \ e_{\Gk^{+}_{J}} \ = \ {\text{\rm{zero function}}} \, . 
\end{equation}

\end{itemize}
\end{itemize}

\medskip

\begin{thm}\label{generaldepthzerotheorem}   Suppose $F$ is a facet and $\sigma$ is the inflation to $\Gk_{F}$ of an irreducible cuspidal representation of $\Gk_{F}/\Gk^{+}_{F}$.  Define idempotents as in \eqref{parahoricidemponent}.  \ Then, the alternating sum

$$
  {\underset {L \subset \ScptB (\Gk) } \sum} (-1)^{\dim (L)} e_{\tau , L} 
$$

\noindent{over} the facets of $\ScptB (\Gk)$ defines a $\Gk$-invariant essentially compact distribution.    
\end{thm}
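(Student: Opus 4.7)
The plan is to follow the template of Theorem \eqref{mainiwahori}, with Corollary \eqref{finitefieldcorb} replacing Corollary \eqref{finitefieldcora} as the crucial finite-field input. By the essentially compact criterion recalled at the close of Section \eqref{apartment}, together with the $\Gk$-equivariance of the system $\{e_{\tau,L}\}$, it suffices to fix an open compact subgroup $J$ and verify that $\bigl(\sum_{L}(-1)^{\dim(L)}\,e_{\tau,L}\bigr)\star e_{J}$ has compact support. As in Section \eqref{proofsectioniwahori}, we may shrink $J$ to a Moy--Prasad subgroup $\Gk_{x_{0},\rho}$ with $x_{0}\in C_{0}$ and $\rho$ a sufficiently large positive integer. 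The problem then reduces to proving the following analogue of Theorem \eqref{dplus}: for every chamber $D$ of sufficiently large Bruhat height $\myht_{C_{0}}(D)$,
$$
\Bigl(\,\sum_{E\in {\mathcal F}_{+}(D)}(-1)^{\dim(E)}\,e_{\tau,E}\,\Bigr)\star e_{\Gk_{x_{0},\rho}}\ =\ 0\,.
$$
Once this holds, the passage from $\myBall(C_{0},m)$ to $\myBall(C_{0},m+1)$ adds only zero terms for $m$ large, exactly as in the derivation of Theorem \eqref{mainiwahori}.

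For the local vanishing, fix an apartment $\ScptA$ containing $C_{0}$ and $D$ and a positive root system $\Phi^{+}$ with $D\subset S(C_{0},\Phi^{+})$. The crucial observation is that, as $E$ ranges over ${\mathcal F}_{+}(D)$, the Peter--Weyl idempotent $e_{\tau,E}$ is the inflation from the finite reductive quotient $\GFq:=\Gk_{D_{+}}/\Gk^{+}_{D_{+}}$ of the Harish--Chandra idempotent $e_{\PFq_{E},\,{\frcLs}}$ of Section \eqref{harishchandra}, where $\PFq_{E}:=\Gk_{E}/\Gk^{+}_{D_{+}}$ runs over the $\BFq$-standard parabolics of $\GFq$ (with $\BFq:=\Gk_{D}/\Gk^{+}_{D_{+}}$), and ${\frcL}$ is the single Harish--Chandra cuspidal class of $\GFq$ determined by the pair $(F,\sigma)$ via Propositions \eqref{compactinductionb} and \eqref{compactinductionc}. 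The assignment $E\mapsto\PFq_{E}$ identifies ${\mathcal F}_{+}(D)$ with the set of $\BFq$-standard parabolic subgroups of $\GFq$.

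With this finite-field dictionary in hand, partition the chambers of $S(C_{0},\Phi^{+})$ as in \eqref{recessgroupsix} by the subset $I\subset\Delta$ of simple roots $\alpha$ for which $D$ is separated from $C_{0}$ by at least $\rho+2$ affine hyperplanes perpendicular to $\alpha$. The Iwahori argument at \eqref{recessgroupeight} shows, for $y\in\mypfacet(D_{+})$, that the product $V_{y}:=\prod_{\gamma\in I^{+}}(\Gk_{y,0}\cap\Uk_{-\gamma})$ lies inside $\Gk_{x_{0},\rho}$ and that its image $V_{y}\Gk^{+}_{D_{+}}/\Gk^{+}_{D_{+}}$ is the unipotent radical $\VFq$ of a $\BFq$-standard parabolic $\QFq$ of $\GFq$; whenever $\QFq\supsetneq\BFq$, Corollary \eqref{finitefieldcorb} forces the alternating sum to vanish after convolution with $e_{V_{y}}$, hence with $e_{\Gk_{x_{0},\rho}}$. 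The residual chambers, for which every $\psi\in\Psi(D_{+},\Phi^{+})$ satisfies $\mygrad(\psi)\in\Phi(\Delta\setminus I)$, are each incident with a permissible set of affine roots, so Proposition \eqref{keypermissible} bounds their number. A finite-cover argument on apartments containing $C_{0}$ via the transitivity of $\Gk_{C_{0}}$ then completes the reduction.

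The main obstacle I expect is the first step, namely verifying uniformly in $E\in{\mathcal F}_{+}(D)$ that $e_{\tau,E}$ really coincides with the inflation of the Harish--Chandra idempotent $e_{\PFq_{E},\,{\frcLs}}$ for a single cuspidal class ${\frcL}$ of $\GFq$; this requires combining the block description \eqref{parahoricblock} of $e_{\tau,L}$ with the associativity statements of Propositions \eqref{compactinductionb}--\eqref{compactinductionc}, so that the finite-field machinery of Section \eqref{harishchandra} can be applied uniformly across all standard parabolics $\PFq_{E}$. Once this identification is pinned down, Corollary \eqref{finitefieldcorb} applies directly, and the geometric argument proceeds in parallel with the Iwahori case.
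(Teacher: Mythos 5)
Your proposal is correct and follows essentially the same route as the paper's own proof, which likewise reduces to the local vanishing of $\sum_{E\in{\mathcal F}_{+}(D)}(-1)^{\dim(E)}e_{\tau,E}\star e_{\Gk_{x_{0},\rho}}$ for $\myht_{C_{0}}(D)$ large, treats $\frcR_{(\rho+2)}$ and the partition \eqref{recessgroupsix} exactly as in the Iwahori case, and invokes Corollary \eqref{finitefieldcorb} as the finite-field input. The identification of $e_{\tau,E}$ with the inflation of the Harish--Chandra idempotent for the single cuspidal class determined by $(F,\sigma)$, which you flag as the main obstacle, is precisely what the block description \eqref{parahoricblock} together with Propositions \eqref{compactinductiona}--\eqref{compactinductionc} supplies, and the paper relies on it in the same way.
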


\medskip

\begin{proof} \quad The proof is an extension of the proof in the Iwahori setting.  Fix a chamber $C_{0}$ and suppose $D$ is an arbitrary chamber.  We take $\Sk$ to be a maximal split torus so that $\ScptA (\Sk )$ contains both chambers.  Take $\Phi^{+} \subset \Phi (\Sk )$ to be a set of positive roots so that $D \subset S(C_{0},\Phi^{+})$.  As before, let $c(D)$, as in \eqref{chamberinout},  denote the set of outward oriented faces of $D$ and define $D_{+}$ and ${\mathcal F}_{+}(D)$ as in \eqref{min-facet} and \eqref{min-facet-set}, i.e., $D_{+} = {\underset {F \in c(D)} \medcap } \, F$ and ${\mathcal F}_{+}(D)$ is the set of facets which contain $D_{+}$.  Suppose $J$ is an open compact subgroup.  The key to adapting the Iwahori setting proof to the present one is to show

  $$
  {\underset {E \in {\mathcal F}_{+}(D)} \sum } \ (-1)^{\dim (E)} \, e_{\tau , E} \ \star e_{J} \ = \ {\text{\rm{zero function}}}
  $$
  when $\myht_{C_{0}}(D)$ is sufficiently large (dependent on $J$).  We may and do assume the open compact subgroup $J$ has the form $J = \Gk_{x_{0}, \rho}$ for $x_{0} \in \mypfacet (C_{0})$, and $\rho$ is a positive integer.

\bigskip

  \begin{itemize}
  \item[$\bullet$] Define ${\frcR}_{(\rho + 2)}$ as in \eqref{recessgroupone}.  \ For $y \in D \subset {\frcR}_{(\rho + 2)}$, we again have $\Gk_{x_{0},\rho }$ contains the group $V_{y}$ of \eqref{recessgrouptwo} and $V_{y}\Gk^{+}_{D_{+}} = \Gk^{+}_{D}$, so  $V_{y}\Gk^{+}_{D_{+}}/\Gk^{+}_{D_{+}}$ is the unipotent radical of the Borel subgroup $\Gk_{D}/\Gk^{+}_{D_{+}}$ of $\Gk_{D_{+}}/\Gk^{+}_{D_{+}}$. \ The analogue of the computations \eqref{recessgroupthree} and \eqref{recessgroupfour} are

  $$
    {\underset 
{E  \,  \in  \, {\mathcal F}_{+}(D)}
\sum } 
  (-1)^{{\text{\rm{dim}}}(E)} \ e_{\tau , E} \ \star \ e_{V_{y}} =  \ {\text{\rm{ zero function}}} \,
  $$

  \noindent{and}

  $$
    {\underset 
{E  \,  \in  \, {\mathcal F}_{+}(D)}
\sum }
  (-1)^{{\text{\rm{dim}}}(E)} \ e_{\tau , E} \ \star \ e_{\Gk_{x_{0},\rho}} =  \ {\text{\rm{ zero function}}} \, .
  $$

    \medskip

    A difference between the Iwahori setting and the general depth zero setting is the following: \ In the Iwahori setting, the individual convolution terms $e_{E} \, \star \, e_{V}$ are all nonzero, but their alternating sum is zero.  In the general setting, some of the individual convolutions $e_{\tau , E} \, \star \, e_{V}$ are zero due to the cuspidal assumption on $\tau$.

    \medskip
    
  \item[$\bullet$]  We again use the sets in \eqref{recessgroupsix} to partition ${\frcR}_{(\rho + 2)}$.  Fix $I \subset \Delta$.  With $V_{y}$ ($y \in I$) defined as in \eqref{recessgroupeight}, we have $V_{y}\Gk^{+}_{D_{+}}/\Gk^{+}_{D_{+}}$ is the unipotent radical of a parabolic subgroup of  $\Gk_{D_{+}}/\Gk^{+}_{D_{+}}$, i.e., there is a $K \in {\mathcal F}_{+}(D)$ so that $V_{y}\Gk^{+}_{D_{+}} = \Gk^{+}_{K}$.    With finitely many exceptions,  the unipotent radical is not $\{ \, 1 \, \}$, and so Corollary \eqref{finitefieldcorb} applies to give 
    ${\underset {E  \,  \in  \, {\mathcal F}_{+}(D)} \sum } (-1)^{{\text{\rm{dim}}}(E)} \ e_{\tau , E} \, \star \, e_{V_{y}}$ is the zero function, and therefore ${\underset {E  \,  \in  \, {\mathcal F}_{+}(D)} \sum } (-1)^{{\text{\rm{dim}}}(E)} \ e_{\tau , E} \, \star \, e_{\Gk_{x_{0},\rho}}$ vanishes too.  We handle the finite number of exceptions by replacing $(\rho + 2)$ by a larger value to exclude these finitely many exceptions.  Again, some of the convolutions $e_{\tau , E} \, \star \, e_{\Gk_{x_{0},\rho}}$ vanish due to the cuspidality of $\tau$. The Theorem follows.

  \end{itemize}
\end{proof}

\begin{cor}\label{generaldepthzerocorollary} The above distribution is the projector to the Bernstein component of $(\Mk , {\text{\rm{c-Ind}}}^{\Gk}_{{\mathcal F}_{F}} (\tau ))$.
\end{cor}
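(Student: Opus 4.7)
The plan is to mirror the argument used in the Iwahori case (Corollary \eqref{iwahoriprojectorcor}). By the $\Gk$-equivariance of the system $(e_{\tau , L})_L$, we may assume, after a $\Gk$-translate, that the facet $F$ lies in the closure of a fixed chamber $C_0 \subset \ScptA$. Set $e_0 = e_{\tau , F}$, and aim to prove the identity
\[
P \star e_0 \;=\; e_0 .
\]
Once this is in hand, the scalar $c_\pi$ defined by $\pi (P) = c_\pi \, \myId_{V_\pi}$ for any irreducible smooth $(\pi , V_\pi)$ can be read off: if $\pi$ lies in the component $\Omega = \Omega ([\Mk , \mycInd{\Mk}{\Fk_F}(\tau )])$, then Proposition \eqref{compactinductiona} together with the definition of $Bk (F)$ ensures $\pi (e_0) \neq 0$, and the identity forces $c_\pi = 1$; if $\pi \notin \Omega$, Propositions \eqref{compactinductionb} and \eqref{compactinductionc} show that $\pi \vert_{\Gk_L}$ contains no character in $Bk (L)$ for any facet $L$, hence $\pi (e_{\tau , L}) = 0$ for every $L$ and $c_\pi = 0$. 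This reduces the corollary to the single identity above.

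By Theorem \eqref{generaldepthzerotheorem}, the series defining $P$ converges after convolution with $e_0$, and we may decompose
\[
P \star e_0 \;=\; \Big( \sum_{E \subset \overline{C_0}} (-1)^{\dim (E)} e_{\tau , E} \Big) \star e_0 \;+\; \sum_{D \neq C_0} \Big( \sum_{E \in {\mathcal F}_+(D)} (-1)^{\dim (E)} e_{\tau , E} \Big) \star e_0 .
\]
For the first term, the nonvanishing contributions come exactly from facets $E \subset \overline{C_0}$ with $F \subset \overline{E}$; these $E$ are in bijection with the standard parabolic subgroups of $\Gk_F / \Gk^+_F$ containing the Borel image of $\Gk_{C_0}$. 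Using the identity $e_{\tau , E} \star e_{\Gk^+_F} = e_{\tau , F} = e_0$ recorded just before Theorem \eqref{generaldepthzerotheorem}, together with the collapse of the alternating sum $\sum_{\PFq \supset \BFq} (-1)^{\myrank (\PFq )} e_{\PFq , {\frcLs}}$ in the reductive quotient (Corollary \eqref{finitefieldcora}, applied with $\frcL$ the Harish--Chandra class of $(\LFq , \sigma )$), the first term reduces to $e_0$.

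For each chamber $D \neq C_0$, I will show the star contribution $\sum_{E \in {\mathcal F}_+(D)} (-1)^{\dim (E)} e_{\tau , E} \star e_0$ vanishes, by a direct adaptation of the proof of Theorem \eqref{generaldepthzerotheorem} with $e_0$ in place of $e_{\Gk_{x_0 , \rho}}$. Concretely, one uses that $e_0$ is bi-$\Gk^+_F$-invariant and applies the affine-root analysis of that proof to exhibit, when $\myht_{C_0}(D)$ is sufficiently large, a unipotent subgroup $V_y \subset \Gk_F \cdot \Gk^+_{D_+}$ such that $V_y \Gk^+_{D_+}/\Gk^+_{D_+}$ is the unipotent radical of a proper parabolic of $\Gk_{D_+}/\Gk^+_{D_+}$; Corollary \eqref{finitefieldcorb}, applied to the Harish--Chandra cuspidal class attached to $\sigma$, then gives $\sum_{E \in {\mathcal F}_+(D)} (-1)^{\dim (E)} e_{\tau , E} \star e_{V_y} = 0$, and hence the desired star cancellation. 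The remaining chambers $D$ for which this construction fails form a finite set, which one controls via Proposition \eqref{keypermissible} and absorbs into the $C_0$-piece, completing the proof of $P \star e_0 = e_0$.

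The main obstacle I anticipate is this last chamber-wise cancellation. In the Iwahori case the idempotent $e_{C_0}$ is literally the normalized indicator of an open compact subgroup, which lets one absorb any unipotent factor $V_y$ directly into the averaging domain. Here $e_0$ carries the character weights of the cuspidal block $Bk (F)$ and this clean absorption is unavailable; instead one must route the vanishing through the bi-$\Gk^+_F$-invariance of $e_0$ together with the cuspidality of $\sigma$, and invoke the general version of the finite-field identity (Corollary \eqref{finitefieldcorb}) rather than the trivial-block identity used in the Iwahori argument. Handling the finitely many exceptional chambers where the unipotent-radical factorization degenerates, via Proposition \eqref{keypermissible}, is where the most delicate bookkeeping will occur.
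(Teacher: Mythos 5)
Your overall architecture matches the paper's: reduce the corollary to a convolution identity against a test idempotent attached to $F\subset\overline{C_0}$, split the Euler--Poincar\'e sum into the $C_0$-contribution plus the shell contributions $\sum_{E\in\mathcal F_+(D)}(-1)^{\dim(E)}e_{\tau,E}$, kill the latter by the finite-field cancellation, and finish with Schur's lemma. (The paper tests against $e_{\Gk^+_F}$ and proves $P\star e_{\Gk^+_F}=e_{\tau,F}$ rather than your $P\star e_{\tau,F}=e_{\tau,F}$; since $e_{\tau,F}=e_{\Gk^+_F}\star e_{\tau,F}$, either identity suffices for the Schur argument, but the choice matters below.)

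The genuine gap is in your treatment of the shell terms. For $P\star e_0=e_0$ you need $\sum_{E\in\mathcal F_+(D)}(-1)^{\dim(E)}e_{\tau,E}\star e_0$ to vanish for \emph{every} chamber $D\neq C_0$, with no exceptions: the facets of $\ScptB$ are partitioned into the $C_0$-piece and the sets $\mathcal F_+(D)$, so any nonvanishing shell term appears verbatim in $P\star e_0$ and cannot be ``absorbed into the $C_0$-piece.'' Proposition \eqref{keypermissible} delivers only \emph{finiteness} of the exceptional chambers, which is exactly what essential compactness (Theorem \eqref{generaldepthzerotheorem}) needs but is useless for computing the value of $P\star e_0$. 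The paper's statement (ii) asserts vanishing for all $D\neq C_0$, and the mechanism --- visible in the Iwahori model, the Proposition preceding Corollary \eqref{iwahoriprojectorcor} --- is that the test idempotent is adapted to $C_0$: for every $D\neq C_0$ the group $\Gk^{+}_{C_0}\cap\Gk_{D_+}$ already covers the unipotent radical of the proper parabolic $\Gk_D/\Gk^{+}_{D_+}$ of $\Gk_{D_+}/\Gk^{+}_{D_+}$, so Corollaries \eqref{finitefieldcora} and \eqref{finitefieldcorb} apply with no height threshold. Your choice of $e_0=e_{\tau,F}$ aggravates the problem: to discharge the auxiliary unipotent group you need $e_{V_y}\star e_{\tau,F}=e_{\tau,F}$, which forces $V_y\subset\Gk^{+}_F$ (using left $\Gk^{+}_F$-invariance), not merely $V_y\subset\Gk_F\cdot\Gk^{+}_{D_+}$ as you write --- averaging $e_{\tau,F}$ over a subgroup of $\Gk_F$ that is nontrivial modulo $\Gk^{+}_F$ does not return $e_{\tau,F}$. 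The repair is to test against $e_{\Gk^+_F}$ and prove the vanishing for all $D\neq C_0$ directly, as the paper does.
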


\begin{proof}

We replace the facet $F$ by an associate one in the `base' chamber $C_{0}$.  Then, as extensions of the Iwahori situation we have: 

\medskip

\begin{itemize}
\item[(i)] \quad   ${\underset {E  \, \subset  \, C_{0}} \sum } (-1)^{{\text{\rm{dim}}}(E)} \ e_{\tau , E} \, \star \, e_{\Gk^{+}_{F}} \ = \ e_{\tau , F}$

\smallskip

\item[(ii)] \quad  For any chamber $D \, \neq \, C_{0}$:
$$
{\underset {E  \, \subset  \, {\mathcal F}_{+}(D)} \sum } (-1)^{{\text{\rm{dim}}}(E)} \ e_{\tau , E} \, \star \, e_{\Gk^{+}_{F}} \ = \ {\text{\rm{zero function}}} \, .
$$

\smallskip

\item[(iii)] \quad  
$$
\big( \, {\underset {E  \, \subset  \, \ScptB (\Gk)} \sum } (-1)^{{\text{\rm{dim}}}(E)} \ e_{\tau , E} \, \big) \ \star \ e_{\Gk^{+}_{F}} \ = \ e_{\tau , F}
$$

\end{itemize}

\bigskip

Set $P \, = \, \big( \, {\underset {E  \, \subset  \, \ScptB (\Gk)} \sum } (-1)^{{\text{\rm{dim}}}(E)} \ e_{\tau , E} \, \big)$.  If $(\pi , V_{\pi})$ is an irreducible smooth representation of $\Gk$, by (iii), we have

$$
\aligned
\pi ( \, e_{\tau , F} \, ) \ &= \ \pi (\, P \ \star \ e_{\Gk^{+}_{F}} ) \ = \ 
\pi ( \, P \, ) \ \pi ( \, e_{\Gk^{+}_{F}} \, ) \ .
\endaligned
$$

\noindent{When} $\pi$ belongs to the Bernstein component $(\Mk , {\text{\rm{c-Ind}}}^{\Gk}_{{\mathcal F}_{F}} (\tau ))$ the left size is nonzero and we conclude $\pi (P)$ is the identity.  Additionally, the fact that the individual terms of $P$ have the form $e_{\tau , E}$ means $\pi (P)$ is zero unless there exists an facet $E$ so that $\pi ( e_{\tau , E} )$ is nonzero.  Hence, $P$ is the projector to the Bernstein component.

\end{proof}

\bigskip

\subsection{The depth zero projector} \quad When we sum over all the Bernstein components of depth zero we obtain{\,}:
\medskip
\begin{itemize}
\item[$\bullet$]  \ $P_{0} \ = \, {\underset {\rho (\Omega ) \, = \, 0} \sum} P(\Omega)$ \ , which is, by definition, the depth zero projector.
  \smallskip
\item[$\bullet$] \ For any facet $F$, the sum of the Peter--Weyl idempotents is ${\frac{1}{\meas (\Gk_{F} )}}$ times the character of the regular representation of $\Gk_{F}/\Gk^{+}_{F}$.  This is the idempotent $e_{\Gk^{+}_{F}} \, = \, {\frac{1}{\meas (\Gk^{+}_{F} )}} \, 1_{\Gk^{+}_{F}}${\,}.
\end{itemize}

\smallskip

\noindent{The} resulting Euler-Poincar{\'{e}} formula for $P_{0}$ is exactly the one in [{\reBKV}].

 
\vskip 0.70in 
 

\section{Nonsplit groups}\label{nonsplit}

\medskip

In this section, we explain the modifications needed in the proofs of sections \eqref{iwahoribernstein} and  \eqref{generaldepthzero} so that they apply when the $\mk$-defined group $\mG$ is nonsplit.  We assume $\mG$ is connected, absolutely quasisimple.  Set $\Gk = \mG (\mk)$.  Let $\mK$ be the maximal unramified extension of $\mk$, and let $\myGal (\mK/\mk)$ denote the Galois group.  

\medskip

\subsection{{\,}} \quad We recall (see [{\reTa}:\S1.10]) there exists a torus $\SamS$ defined over $\mk$ satisfying{\,}: \ (i) {\,}$\SamS$ is a maximal split $\mK$-torus, \ and \ (ii) {\,}  $\mS \, := \, \SamS^{\myGal (\mK / \mk )}$ is a maximal split $\mk$-torus.  We also recall the result of Steinberg that $\mG$ is quasi-split over $\mK$, and therefore, the centralizer $\SamZ \, = \, C_{\mG}(\SamS )$ is a maximal $\mk$-torus.  Furthermore{\,}: 
\smallskip
\begin{itemize}

\item[$\bullet$] \quad Since $\mG$ is assumed to be absolutely quasisimple, the Bruhat--Tits building $\ScptB (\mG (\mK ))$ is a simplicial complex, and both $\mG (\mK)$ and $\myGal (\mK /\mk )$  act by simplicial automorphisms.   The building $\ScptB (\Gk )$ is the $\myGal (\mK/\mk)$-fixed points of $\ScptB (\mG (\mK ))$. 
\medskip
\item[$\bullet$] \quad Let $\ScptA (\mS (\mK ))$ be the apartment of $\mS (\mK )$, and $\Psi (\mS (\mK ))$, the corresponding system of affine roots.   The fixed points $\ScptA (\mS (\mK ))^{\myGal (\mK /\mk )}$ and $\ScptB (\mG (\mK ))^{\myGal (\mK /\mk )}$ are identified (defined) as the building $\ScptB (\Gk )$ and apartment $\ScptA (\Sk)$ {\,}($\Sk = \mS (\mk ) = \mS (\mK)^{\myGal (\mK / \mk )}$).  
\smallskip
Let $\mM_{\text{\rm{o}}} = C_{\mG}(\mS )$, a minimal Levi $\mk$-subgroup.  Given a choice of positive roots $\Phi^{+}$, let $\mP_{\text{\rm{o}}} = \mM_{\text{\rm{o}}} \mU_{\text{\rm{o}}}$ be the corresponding minimal parabolic $\mk$-subgroup.  We set $\Mk_{\text{\rm{o}}} = \mM_{\text{\rm{o}}} (\mk)$, $\Uk_{\text{\rm{o}}} = \mU_{\text{\rm{o}}} (\mk)$, and $\Pk_{\text{\rm{o}}} = \Mk_{\text{\rm{o}}} \Uk_{\text{\rm{o}}} = \mP_{\text{\rm{o}}} (\mk )$.
\medskip
\item[$\bullet$]  \quad  The affine root system $\Psi = \Psi (\Sk )$ on $\ScptA$ consists of all nonconstant restrictions to $\ScptA$ of affine roots in $\Psi( \SamS (\mK ))$ (see [{\reTa}:{\S}1.10.1]).   If $E$ is a facet of $\ScptB$, and $x,y \, \in \, \mypfacet{(E)}$, then
  $$
\Gk_{x,0} \ = \ \Gk_{y,0} \qquad {\text{\rm{and}}} \qquad \Gk_{x,0^{+}} \ = \ \Gk_{y,0^{+}} \ . 
  $$
Because of these equalities, we denote the common subgroups as $\Gk_{E}$ and $\Gk^{+}_{E}$.  We note that if $F$ is a subfacet of $E$, then
$$
\Gk_{F} \ \supset \ \Gk_{E} \ \supset \ \Gk^{+}_{E} \ \supset \  \Gk^{+}_{F}  \ ,
$$
\noindent{and} $\Gk_{E}/\Gk^{+}_{F}$ is a parabolic subgroup of $\Gk_{F}/\Gk^{+}_{F}$. 
\medskip
\item[$\bullet$]  \quad  Let $\Zk = \SamZ (\mk)$, and let $\Zk_{c}$ be the maximal bounded (compact) subgroup of $\Zk$.  The group $N_{\Gk}(\Zk )$ acts as orthogonal affine maps on $\ScptA$ with $\Zk_{c}$ acting trivially.   We call $N_{\Gk}(\Zk ) / \Zk_{c}$ the extended affine Weyl group of $\Gk$.  The Coxeter group $\myCoxeter_{\Psi}$ of $\Psi (\ScptA )$, i.e, the symmetry group of $\ScptA$ generated by reflections in the hyperplanes $H_{\psi}$ {\,}($\psi \in \Psi$) is a finite index subgroup of   $N_{\Gk}(\Zk ) / \Zk_{c}$.   The action of $\myCoxeter_{\Psi}$ is transitive on the chambers of $\ScptA$ (see [{\reTa}:{\S}1.8]). 
\medskip
For $x \in \ScptA$, define
\smallskip
\begin{equation}
\myCoxeter_{x} \ := \ \begin{cases}
\begin{tabular}{p{4.0in}}
symmetry group of $\ScptA$ generated by reflections across affine hyperplanes $H_{\psi}$ containing $x$, i.e., $\psi (x) = 0$.  \\
\end{tabular}
\end{cases}
\end{equation}
\smallskip
\noindent{Set} $\Phi \, := \, \{ \, \mygrad (\psi) \ | \ \psi \in \Psi \, \}$, a possibly nonreduced root system, and let $\myCoxeter_{\Phi}$ denote the Coxeter group of $\Phi$.  We recall a point $x$ is called \ {\it special} \ if $W_{x}$ and $W_{\Phi}$ are the same.  Special points always exists (see [{\reTa}:{\S}1.9]).  When $x$ is special, the group $N_{\Gk}(\Zk )/\Zk_{\text{\rm{o}}}$ is the semidirect product of the group $W_{x}$ and the normal subgroup of translations $X = \Zk/\Zk_{c}$.  Similarly, $W_{\Psi}$ is  semidirect product of the group of $W_{x}$ and translation subgroup of $W_{\Psi}$.  Let $\Phi^{\myred}$ denote the reduced root system of $\Phi$.  Fix a chamber $C_{0}$ of $\ScptA$.  For $\pm \gamma \in \Phi^{\myred}$ define $H^{\pm \gamma}_{C_{0}}$ as in \eqref{bruhatheightpieces}.  Then Lemma \eqref{bruhatheight} holds.
\vskip 0.05in
\item[$\bullet$]  \quad The proof of Proposition \eqref{keypermissible} used only the affine hyperplanes $H_{\pm \psi}$, and it is valid in the nonsplit situation; so, Proposition \eqref{keypermissible} holds.  We use \eqref{csector} to define the $C_{0}$-based sector $S(C_{0},\Phi^{+})$.
\medskip
\item[$\bullet$]  \quad  The results of [{\reMPb}] stated in section \eqref{generaldepthzero} hold in the nonsplit situation.  

\medskip

\end{itemize}

\begin{thm}\label{maindepthzero}   Suppose $\mG$ is a connected absolutely quasisimple $\mk$-group.  Let $\Gk = \mG (\mk )$, and let $\ScptB = \ScptB (\Gk )$ be the Bruhat--Tits building.   Suppose $F$ is a facet of $\ScptB$, and $\sigma$ is the inflation to $\Gk_{F}$ of an irreducible cuspidal representation of $\Gk_{F}/\Gk^{+}_{F}$.  Take $\tau \in {\mathcal E}(\sigma)$ as in Proposition \eqref{compactinductiona}, and define a $\Gk$-equivariant system of idempotents as in \eqref{parahoricidemponent}.  \ Then,

\begin{itemize}
\item[$\bullet$] \ The alternating sum
\begin{equation}\label{nonsplitalternatingsum}
 P \ = \  {\underset {L \subset \ScptB (\Gk) } \sum} (-1)^{\dim (L)} e_{\tau , L} 
\end{equation}
\noindent{over} the facets of $\ScptB (\Gk)$ defines a $\Gk$-invariant essentially compact distribution.
\smallskip
\item[$\bullet$] With Levi subgroup $\Mk$ defined as in \eqref{attachedlevi}, the distribution $P$ is the projector to the Bernstein component of $(\Mk , {\text{\rm{c-Ind}}}^{\Gk}_{{\mathcal F}_{F}} (\tau ))$.
\end{itemize}

\end{thm}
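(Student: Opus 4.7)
The plan is to reproduce in the nonsplit setting the argument used to establish Theorem \eqref{generaldepthzerotheorem} and Corollary \eqref{generaldepthzerocorollary}, invoking at each step the nonsplit analogue recorded in the bullets of Section \ref{nonsplit}. Since the finite-field convolution identities of Section \ref{harishchandra} (notably Corollaries \eqref{finitefieldcora} and \eqref{finitefieldcorb}) are insensitive to whether $\mG$ is split, and since Proposition \eqref{keypermissible} is stated in a setting that already accommodates the nonreduced affine root system, what remains is to verify that the geometric reductions of the split proof still go through.

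First I would establish essential compactness of $P$ via the criterion at the end of Section \ref{apartment}. Fix a base chamber $C_{0} \subset \ScptB$, reduce an arbitrary open compact subgroup $J$ to a Moy--Prasad subgroup $\Gk_{x_{0},\rho}$ with $x_{0} \in \mypfacet(C_{0})$ and $\rho$ a positive integer, and exhaust $\ScptB$ by the balls $\myBall(C_{0},m)$. Passing from $\myBall(C_{0},m)$ to $\myBall(C_{0},m+1)$ introduces a new contribution $\sum_{E \in {\mathcal F}_{+}(D)} (-1)^{\dim(E)}\, e_{\tau, E} \star e_{\Gk_{x_{0},\rho}}$ for each chamber $D$ in the shell of height $m+1$. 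The central step is to show each such sum vanishes once $m$ is sufficiently large: fix an apartment $\ScptA$ containing $C_{0}$ and $D$, choose $\Phi^{+}$ with $D \subset S(C_{0},\Phi^{+})$, and partition $S(C_{0},\Phi^{+})$ by the sets $\frcR_{\{I,(\rho+2)\}}$ of \eqref{recessgroupsix}. For each nonempty $I \subset \Delta$, produce the subgroup $V_{y} \subset \Gk_{x_{0},\rho}$ analogous to \eqref{recessgrouptwo} with $y \in \mypfacet(D_{+})$, verify that $V_{y}\Gk^{+}_{D_{+}}/\Gk^{+}_{D_{+}}$ is the unipotent radical of a standard parabolic of $\Gk_{D_{+}}/\Gk^{+}_{D_{+}}$, and invoke Corollary \eqref{finitefieldcorb} to obtain vanishing; handle the finitely many exceptional chambers $\frcR^{\text{last}}_{I,(\rho+2)}$ by a further finite enlargement of $\rho$, their finiteness being guaranteed by Proposition \eqref{keypermissible}. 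A finite subcovering argument using the action of the compact Iwahori $\Gk_{C_{0}}$ on the apartments through $C_{0}$ then extends the bound uniformly across $\ScptB$, yielding $P \in {\mathcal Z}(\Gk)$.

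With $P \in {\mathcal Z}(\Gk)$ in hand, I would identify $P$ as the component projector by first replacing $F$ with an associate facet inside $C_{0}$, and then adapting the same chamber-by-chamber analysis to prove $P \star e_{\Gk^{+}_{F}} = e_{\tau, F}$: the $C_{0}$-contribution telescopes to $e_{\tau, F}$ by a direct finite-group computation, while every other chamber contributes zero by the argument above with $e_{\Gk^{+}_{F}}$ in place of $e_{\Gk_{x_{0},\rho}}$. For any irreducible smooth $(\pi,V_{\pi})$, the scalar $\pi(P)$ then satisfies $\pi(P)\,\pi(e_{\Gk^{+}_{F}}) = \pi(e_{\tau, F})$, and Propositions \eqref{compactinductiona}--\eqref{compactinductionc} force $\pi(P) = \myId_{V_{\pi}}$ exactly when $\pi$ lies in the Bernstein component of $[\Mk, {\text{\rm{c-Ind}}}^{\Mk}_{\Fk_{F}}(\tau)]$, while no other irreducible $\pi$ can have any $\pi(e_{\tau, E})$ nonzero. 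The main obstacle I expect is the verification, in the possibly nonreduced affine root setting obtained from Galois restriction of $\Psi(\SamS(\mK))$, that the map $E \mapsto \Gk_{E}/\Gk^{+}_{D_{+}}$ realizes a bijection between ${\mathcal F}_{+}(D)$ and the standard parabolic subgroups of $\Gk_{D_{+}}/\Gk^{+}_{D_{+}}$ containing the Borel $\Gk_{D}/\Gk^{+}_{D_{+}}$; this correspondence underlies the direct invocation of Corollary \eqref{finitefieldcorb}, and reduces essentially to the bullet in Section \ref{nonsplit} asserting that $\Gk_{E}/\Gk^{+}_{F}$ is a parabolic subgroup of $\Gk_{F}/\Gk^{+}_{F}$, after which the proof proceeds in parallel with the split case.
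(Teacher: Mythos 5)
Your proposal follows essentially the same route as the paper's own proof: the paper likewise establishes Theorem \eqref{maindepthzero} by transporting the arguments of Theorems \eqref{dplus}, \eqref{mainiwahori} and \eqref{generaldepthzerotheorem} to the nonsplit setting via the bullets of Section \eqref{nonsplit} — reducing $J$ to $\Gk_{x_{0},\rho}$, using the sets $\frcR_{\{I,k\}}$, the subgroup $V_{y}$, Corollaries \eqref{finitefieldcora}--\eqref{finitefieldcorb}, and Proposition \eqref{keypermissible} for the finitely many exceptional chambers, then identifying $P$ as the projector through $P \star e_{\Gk^{+}_{F}} = e_{\tau , F}$. The only caveat is that the numerical threshold $(\rho+2)$ from the split case may need to be replaced by a larger constant $N$ in the nonsplit setting (as the paper does), but your allowance for a further finite enlargement covers this.
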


\smallskip

\begin{proof} \quad The proof of Theorem \eqref{maindepthzero} is adapted from those of Theorems \eqref{dplus} and \eqref{mainiwahori}.  We fix a chamber $C_{0}$.    Suppose $D \, (\neq C_{0})$ is any other chamber.  For an arbitrary fixed open compact subgroup $J$ we need to show $P \star e_{J} \in C^{\infty}_{c}(\Gk )$.  It suffices to show
\smallskip  
$$
  {\underset {E \in {\mathcal F}_{+}(D)} \sum } \ (-1)^{\dim (E)} \, e_{\tau , E} \ \star e_{J} \ = \ {\text{\rm{zero function}}}
$$
\smallskip
\noindent{when} $\myht_{C_{0}}(D)$ is sufficiently large.  Fix $x_{\text{\rm{o}}} \in C_{0}$, and take $\rho \in \bR_{>0}$ sufficiently large so that $\Gk_{x_{\text{\rm{o}}},\rho } \subset J$.  We replace $J$ with  $\Gk_{x_{\text{\rm{o}}},\rho }$
\smallskip
Take $\SamS \supset \mS$ ($\mS =   \SamS^{\myGal (\mK/\mk )}$), as above, so that $D$ is in $\ScptA = \ScptA (\Sk )$, and take a positive system of roots $\Phi^{+} \subset \Phi (\Sk )$, so that $S(C_{0}, \Phi^{+})$ contains $D$.  
\medskip
Let $N$ be a sufficiently large integer so that for any simple root $\alpha \in \Delta ((\Phi^{\myred})^{+})$, when $C_{0}$ and $D$ are separated by $N$ affine hyperplanes perpendicular to $\alpha$, then:
\smallskip
\begin{equation}
\begin{tabular}{p{5.2in}}
$\forall$ \ $\psi \in \Psi = \Psi (\Sk )$, satisfying $\lambda_{\alpha}(\mygrad (\psi )) > 0$, i.e., when $\mygrad (\psi )$ is expressed as a linear (nonnegative) combination simple roots the $\alpha$ coefficient is nonzero, then $\forall$ $x \in C_{0}$ and $\forall$ $y \in D$, we have $(\psi (y) - \psi (x)) \, > \, \rho$. \\
\end{tabular}
\end{equation}
\noindent{If} such a $\psi$ vanishes on $D_{+}$, and we take $y \in D_{+}$, we see $-\psi (x) \, = \, (\psi (y) - \psi (x) ) > \rho$,  This means $(\Gk_{x,\rho} \cap \Uk_{-\gamma} ) \supset \Xk_{-\psi}$, where $\gamma = \mygrad (\psi )$.   We note in the nonsplit situation, the root group $\Uk_{-\gamma}$ and the affine root group $\Xk_{-\psi}$ may be noncommutative.
\medskip
Define ${\frcR}_{N}$ as in \eqref{recessgroupone}; roughly the set of chambers which are Bruhat distance at least $N$ from the walls of $S(C_{0},\Phi^{+})$.  The above says for any chamber $D$ in  ${\frcR}_{N}$, the subgroup  $\Gk_{x,\rho}$ contains the subgroup
\smallskip
\begin{equation}\label{nonsplitrgtwo}
  V \ := \ {\underset { \psi \in \Psi (D_{+},\Phi^{+}) }
            \prod } \Xk_{-\psi} \, , 
\end{equation}

\noindent{and} $V \Gk^{+}_{D_{+}}/\Gk^{+}_{D_{+}} \subset \Gk_{D_{+}}/\Gk^{+}_{D_{+}}$ is the unipotent radical of the Borel subgroup $\Gk_{D_{+}}/\Gk^{+}_{D_{+}}$.  We then have the analogue of \eqref{recessgroupthree} and \eqref{recessgroupfour}, i.e.,

$$
{\underset 
{E  \,  \in  \, {\mathcal F}_{+}(D)}
\sum } 
(-1)^{{\text{\rm{dim}}}(E)} \ e_{E} \ \star e_{\Gk_{x_{0},\rho}} \ =  \ {\text{\rm{zero function}}} \, .
$$
\medskip
The situation when $D$ is a chamber in $S(C_{0},\Phi^{+}) \, \backslash \, \frcR_{N}$ is handled by defining sets ${\frcR}_{\{ I, k \}}$ as in \eqref{recessgroupsix}, and adapting the argument.  We omit the very similar details.  This completes the proof the alternating sum \eqref{nonsplitalternatingsum} defines a Bernstein center distribution.

\medskip

To establish that $P$ is the projector, we use Corollary \eqref{finitefieldcora} and adapt the proof of Corollary \eqref{iwahoriprojectorcor} to deduce{\,}:

\begin{itemize}
  
\item[(i)] \quad   ${\underset {E  \, \subset  \, C_{0}} \sum } (-1)^{{\text{\rm{dim}}}(E)} \ e_{\tau , E} \, \star \, e_{\Gk^{+}_{F}} \ = \ e_{\tau , F}$

\smallskip

\item[(ii)] \quad  For any chamber $D \, \neq \, C_{0}$:
$$
{\underset {E  \, \subset  \, {\mathcal F}_{+}(D)} \sum } (-1)^{{\text{\rm{dim}}}(E)} \ e_{\tau , E} \, \star \, e_{\Gk^{+}_{F}} \ = \ {\text{\rm{zero function}}} \, .
$$
\end{itemize}

\noindent{That} $P$ is the projector then follows. 

\end{proof}

\vskip 0.50in

\vskip 0.70in 
 

\section{Appendix I}\label{appendix}

\subsection{{\,}} \quad We show here how the argument to establish the Euler-Poincar{\'{e}} formula for a depth zero Bernstein projector also applies, when $r$ is a positive integer to the depth $r$ projector considered by   Bezrukavnikov--Kazhdan--Varshavsky in [{\reBKV}].  To simplify the exposition we assume the $\mk$-group $\mG$ is split absolutely quasisimple, and leave the necessary minor modifications for the nonsplit setting to the reader.

\medskip

Suppose $r > 0$ is integral, and $F$ is a facet.  Then for any two points $x, \, y \, \in \, \mypfacet (F)$, we have 
$$
\Gk_{x,r}  \ = \ \Gk_{y,r} \qquad {\text{\rm{and}}} \qquad \Gk_{x,r^{+}}  \ = \ \Gk_{y,r^{+}} \ .
$$ 
We therefore, for convenience denote these groups as $\Gk_{F,r}$ and $\Gk_{F,r^{+}}$ respectively.  If $E \subset F$ are two facets of $\ScptB$,  then 
\begin{equation}\label{appendixone}
\Gk_{E,r} \ \supset \ \Gk_{F,r} \ \supset \ \Gk_{F,r^{+}} \ \supset \ \Gk_{E,r^{+}} \ .
\end{equation} 

\medskip

Since we have fixed $r$, when $E$ is a facet, denote by $e_{E,r^{+}}$ the idempotent for the trivial representation of $\Gk_{E,r^{+}}$ (note $r^{+}$ and not $r$).   It is obvious from \eqref{appendixone} that 

\begin{equation}\label{appendixtwo}
  \forall \ \ E \subset F \ \ : \qquad  e_{E,r^{+}} \ \star \ e_{F,r^{+}} \ = \ e_{F,r^{+}} \ .
\end{equation}

\medskip

\begin{lemma}\label{appendixthree}  Suppose $D$ is a chamber and ${\mathcal F}$ is a set of faces of $D$ satisfying $1 \, \le \, \# ({\mathcal F}) \, \le \, \ell$ \ (recall $\ell$ is the rank of $\mG$).  Set

$$
  E \ := \ {\underset {F \in {\mathcal F}} \bigcap } \ F \qquad {\text{\rm{and}}} \qquad {\mathcal C} \ := \ \{ \ {\text{\rm{facet}}} \ K \subset D \ | \ E \subset K \ \} \, .
$$

\smallskip

\noindent{Suppose} $V \in {\mathcal C}$ satisfies $V \neq E$, then
\smallskip
$$
    \Big( \ {\underset {K \in {\mathcal C}} \sum} \ (-1)^{\dim (K)} \ e_{K,r^{+}} \ \Big) \ \star \ e_{V,r^{+}}  \ = \ {\text{\rm{zero function}}} \, .
$$
\end{lemma}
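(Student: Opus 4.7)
The plan is to reduce the claim to a M\"obius-style sign cancellation on a Boolean lattice $2^I$, with the main technical input a single convolution identity for the Moy--Prasad idempotents $e_{K,r^+}$. First I would parametrize $\mathcal{C}$: writing $I = \{i : F_i \in \mathcal{F}\}$ (so $|I| = m$) and letting $A_F$ denote the vertex set of the closure of a facet $F \subseteq \overline{D}$, one has $A_E = A_D \setminus \{v_i : i \in I\}$, and each $K \in \mathcal{C}$ is uniquely determined by a subset $T_K \subseteq I$ via $A_K = A_E \sqcup \{v_i : i \in T_K\}$. Under this bijection $|\mathcal{C}| = 2^m$, $E \leftrightarrow \emptyset$, $D \leftrightarrow I$, $\dim K = (\ell - m) + |T_K|$, and the smallest facet $K \vee V$ of $D$ containing both $K$ and $V$ corresponds to $T_K \cup T_V$.

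The crux of the argument is the convolution identity
$$e_{K,r^+} \star e_{V,r^+} \;=\; e_{K \vee V, r^+} \qquad \text{for all } K,V \in \mathcal{C},$$
which I would prove by showing $\Gk_{K,r^+} \cdot \Gk_{V,r^+} = \Gk_{K \vee V, r^+}$ as a set equality and that this common set is actually a subgroup. For the set equality I would use the Iwahori-type decomposition of depth-$r^+$ groups as ordered products of affine root subgroups $\Xk_\psi$ together with the torus filtration, combined with the vertex criterion that $\Xk_\psi \subseteq \Gk_{F,r^+}$ iff $\psi(v) \ge r$ on every $v \in A_F$ (with a strict refinement on vertices where $\psi$ is constant along $F$): this shows that the affine-root generating set of $\Gk_{K \vee V, r^+}$ is precisely the union of the generating sets of $\Gk_{K,r^+}$ and $\Gk_{V,r^+}$. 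For the subgroup property I would apply the Moy--Prasad commutator inclusions $[\Xk_\psi,\Xk_\phi] \subseteq \Xk_{\psi+\phi}$ to $\psi \in \Gk_{K,r^+}$ and $\phi \in \Gk_{V,r^+}$ to push all commutators back into $\Gk_{K \vee V, r^+}$. The convolution identity then follows from the classical formula $e_{H_1} \star e_{H_2} = e_{H_1 H_2}$ valid whenever $H_1, H_2$ are compact subgroups whose product is again a subgroup.

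Granting the identity, the cancellation is immediate. Fix any $j \in T_V$, which exists since $V \neq E$, and pair each $K \in \mathcal{C}$ with $K'$ determined by $T_{K'} = T_K \triangle \{j\}$. Because $j \in T_V$, we have $T_K \cup T_V = T_{K'} \cup T_V$ in both the case $j \in T_K$ and the case $j \notin T_K$, so $K \vee V = K' \vee V$ and the identity yields $e_{K,r^+} \star e_{V,r^+} = e_{K',r^+} \star e_{V,r^+}$. Meanwhile $|T_K|$ and $|T_{K'}|$ differ by one, so $(-1)^{\dim K}$ and $(-1)^{\dim K'}$ are opposite, and the contributions from the pair $\{K, K'\}$ cancel. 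Summing over the pairing gives the stated vanishing.

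The main obstacle is the convolution identity itself. The set-theoretic equality of affine-root generating sets $\Psi_{K \vee V} = \Psi_K \cup \Psi_V$ is fairly direct from the vertex criterion once the strict-versus-nonstrict subtlety is handled. The harder point is proving that $\Gk_{K,r^+} \cdot \Gk_{V,r^+}$ is a subgroup, not merely a subset of $\Gk_{K \vee V, r^+}$: this requires the commutation properties of affine root subgroups under the Moy--Prasad depth-$r^+$ filtration at integer $r \ge 1$, which is a standard but non-trivial input from Bruhat--Tits theory. Once that subgroup property is in hand, the remainder of the proof is purely combinatorial sign cancellation on $2^I$.
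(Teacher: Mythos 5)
Your proposal is correct, and it is in fact considerably more complete than the proof the paper gives, which consists of the single line ``Follows from \eqref{appendixtwo}.'' That one-line argument works immediately only when $V$ is the top element $D$ of $\mathcal C$: then $e_{K,r^{+}}\star e_{V,r^{+}}=e_{V,r^{+}}$ for every $K\in\mathcal C$ by \eqref{appendixtwo}, and the binomial signs cancel. For an intermediate $V$ (which is genuinely needed when the lemma is invoked in the part of the proof of Theorem \eqref{appendixfour} dealing with $S(C_{0},\Phi^{+})\setminus\frcR_{\rho+2}$, where the relevant group is $V_{y}\Gk_{D_{+},r^{+}}=\Gk_{K,r^{+}}$ for some facet $K$ strictly between $E$ and $D$), one really does need the product formula $e_{K,r^{+}}\star e_{V,r^{+}}=e_{K\vee V,r^{+}}$ that you isolate as the crux; already for $m=2$ and $V$ one of the two codimension-one elements of $\mathcal C$, the term $e_{K_{2},r^{+}}\star e_{K_{1},r^{+}}=e_{D,r^{+}}$ is not an instance of \eqref{appendixtwo}. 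Your verification of that formula is sound: the vertex criterion ($\Xk_{\psi}\subseteq\Gk_{F,r^{+}}$ iff $\psi\ge r$ at every vertex of $F$ with strict inequality at at least one) does give $\Psi_{K\vee V}=\Psi_{K}\cup\Psi_{V}$, since a vertex of $K\vee V$ where $\psi>r$ lies in $K$ or in $V$. For the subgroup property there is a shortcut that avoids the commutator calculus: by \eqref{appendixone} every $\Gk_{K,r^{+}}$ with $K\in\mathcal C$ satisfies $\Gk_{E,r^{+}}\subseteq\Gk_{K,r^{+}}\subseteq\Gk_{K,r}\subseteq\Gk_{E,r}$, and for integral $r\ge 1$ the quotient $\Gk_{E,r}/\Gk_{E,r^{+}}$ is abelian, so the product of any two of these groups is automatically a group. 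Your closing pairing argument on $2^{I}$ via $T_{K}\mapsto T_{K}\bigtriangleup\{j\}$ for a fixed $j\in T_{V}$ is correct and clean.
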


\begin{proof} \quad Follows from \eqref{appendixtwo}.
\end{proof}

\begin{thm}\label{appendixfour}  Suppose $r$ is a positive integer.  The Euler--Poincar{\'{e}} sum

$$
  {\underset {L \subset \ScptB (\Gk) } \sum} (-1)^{\dim (L)} \ e_{L,r^{+}} 
$$

\noindent{over} the facets of $\ScptB (\Gk)$ is a $\Gk$-invariant essentially compact distribution equal to the projector $P_r$ to ${\underset {\rho (\Omega ) \le r} \medcup } \, \Omega$.
\end{thm}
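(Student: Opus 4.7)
The plan is to mirror the proofs of Theorems \eqref{dplus}, \eqref{mainiwahori}, and \eqref{maindepthzero}, substituting Lemma \eqref{appendixthree} for the finite-field cuspidal combinatorics (Corollaries \eqref{finitefieldcora}--\eqref{finitefieldcorb}) that drove those proofs. Fix a chamber $C_{0}$ and form the partial sums $P_{m} = \sum_{L \subset \myBall(C_{0},m)} (-1)^{\dim L}\, e_{L,r^{+}}$. To establish essential compactness it is enough to prove, for every open compact subgroup $J$, that
\begin{equation*}
\Big( \sum_{E \in {\mathcal F}_{+}(D)} (-1)^{\dim E}\, e_{E,r^{+}} \Big) \star e_{J} \;=\; 0
\end{equation*}
once $\myht_{C_{0}}(D)$ is sufficiently large, since the facets newly contributed to $\myBall(C_{0},m{+}1) \setminus \myBall(C_{0},m)$ by a chamber $D$ of height $m{+}1$ are exactly ${\mathcal F}_{+}(D)$.

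Replace $J$ by $\Gk_{x_{0},\rho}$ with $x_{0} \in \mypfacet(C_{0})$ and $\rho$ a large positive integer, fix an apartment $\ScptA(\Sk)$ containing $C_{0}$ and $D$, and pick positive roots $\Phi^{+}$ with $D \subset S(C_{0},\Phi^{+})$. For $D$ lying in $\frcR_{k}$ (notation \eqref{recessgroupone}) with $k$ large enough in terms of $\rho$ and $r$, define
\begin{equation*}
V_{y} \;=\; \prod_{\psi \in \Psi(D_{+},\Phi^{+})} \Xk_{-\psi + r},
\end{equation*}
the product of affine root groups attached to the affine roots $-\psi + r$ as $\psi$ ranges over affine roots vanishing on $D_{+}$ with $\mygrad(\psi) \in \Phi^{+}$. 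The inclusion $V_{y} \subset J$ follows from Corollary \eqref{separation} together with the estimate $\psi(x_{0}) \le -(k-1)$; the Moy--Prasad product description identifies the $\Xk_{-\psi+r}$ as precisely the ``threshold'' affine root groups moving $\Gk_{D_{+},r^{+}}$ up to $\Gk_{D,r^{+}}$, giving $V_{y} \cdot \Gk_{D_{+},r^{+}} = \Gk_{D,r^{+}}$ as subgroups, so $e_{D_{+},r^{+}} \star e_{V_{y}} = e_{D,r^{+}}$. For each $E \in {\mathcal F}_{+}(D)$ we have $D_{+} \subset E$ and hence (by \eqref{appendixtwo} in its symmetric form) $e_{E,r^{+}} \star e_{D_{+},r^{+}} = e_{E,r^{+}}$. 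Inserting $e_{D_{+},r^{+}}$ and then applying Lemma \eqref{appendixthree} with ${\mathcal F} = c(D)$, minimal facet $D_{+}$, and $V = D \neq D_{+}$,
\begin{equation*}
\sum_{E \in {\mathcal F}_{+}(D)} (-1)^{\dim E}\, e_{E,r^{+}} \star e_{V_{y}} \;=\; \sum_{E} (-1)^{\dim E}\, e_{E,r^{+}} \star e_{D,r^{+}} \;=\; 0,
\end{equation*}
and right-convolving with $e_{J}$ using $e_{V_{y}} \star e_{J} = e_{J}$ (valid since $V_{y} \subset J$) produces the desired vanishing.

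Chambers in $S(C_{0},\Phi^{+})$ that fail to lie in $\frcR_{k}$ are treated via the partition $\{\frcR_{\{I,k\}}\}$ of \eqref{recessgroupsix}, exactly as in the passage from $\frcR_{(\rho+2)}$ to $\frcR_{\{I,(\rho+2)\}}$ in Theorem \eqref{dplus}: for each nonempty $I \subset \Delta$, tailor $V_{y}$ using only the affine roots with $\mygrad(\psi)$ in the $I$-direction, so that $V_{y} \Gk_{D_{+},r^{+}} = \Gk_{V,r^{+}}$ for some facet $V \in {\mathcal F}_{+}(D)$ strictly larger than $D_{+}$, and invoke Lemma \eqref{appendixthree} with that $V$. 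A finite residual family of chambers (analogous to the $\frcR^{K}_{I,(\rho+2)}$ and finite by Proposition \eqref{keypermissible}) is absorbed into the ball radius $M$. For the identification $P = P_{r}$, the same $V_{y}$-argument yields the analogs of statements (i)--(iii) of the proposition preceding Corollary \eqref{iwahoriprojectorcor}, and hence $P \star e_{C_{0},r^{+}} = e_{C_{0},r^{+}}$; for an irreducible smooth $(\pi,V_{\pi})$, depth $\le r$ forces $V_{\pi}^{\Gk_{C_{0},r^{+}}} \neq 0$ and hence $\pi(P) = \myId$, while depth $> r$ forces $V_{\pi}^{\Gk_{L,r^{+}}} = 0$ for every facet $L$, giving $\pi(P) = 0$.

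The main obstacle is the Iwahori-type decomposition $V_{y} \cdot \Gk_{D_{+},r^{+}} = \Gk_{D,r^{+}}$ (and its $I$-tailored variants), the depth-$r$ analog of the identity $V_{y}\Gk^{+}_{D_{+}} = \Gk^{+}_{D}$ used in \eqref{recessgroupthree}. It rests on the structure of Moy--Prasad groups as ordered products of affine root groups, together with the elementary case analysis identifying the affine roots $\phi$ with $\phi(D^{\circ}) > r$ and $\phi(D_{+}) \le r$ as precisely the $\phi = -\psi + r$ for $\psi \in \Psi(D_{+},\Phi^{+})$.
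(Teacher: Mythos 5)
Your essential-compactness argument is correct and is in substance the paper's own: your group $V_{y}=\prod_{\psi\in\Psi(D_{+},\Phi^{+})}\Xk_{-\psi+r}$ is the same group the paper writes as $\prod\Xk_{-\psi+2r}$ over affine roots with $\psi_{|_{D_{+}}}\equiv r$ (reindex $\psi\mapsto\psi+r$), the containment in $\Gk_{x_{0},\rho}$ and the identity $V_{y}\Gk_{D_{+},r^{+}}=\Gk_{D,r^{+}}$ are used in the same way, the vanishing is reduced to Lemma \eqref{appendixthree} with $V=D$, and the remaining chambers are handled by the partition \eqref{recessgroupsix} together with Proposition \eqref{keypermissible}, exactly as in the paper.

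The identification $P=P_{r}$, however, has a genuine gap. The claim that depth $\le r$ forces $V_{\pi}^{\Gk_{C_{0},r^{+}}}\neq 0$ is false when the depth equals $r$ exactly. For a chamber $C_{0}$ with vertex $x$ one has $\Gk_{C_{0},r^{+}}\supset\Gk_{x,r^{+}}$, and $\Gk_{C_{0},r^{+}}/\Gk_{x,r^{+}}$ is the ``nilradical'' part of the abelian quotient $\Gk_{x,r}/\Gk_{x,r^{+}}$ determined by $C_{0}$; a character of $\Gk_{x,r}/\Gk_{x,r^{+}}$ occurring in $V_{\pi}^{\Gk_{x,r^{+}}}$ is trivial on that part only if the corresponding coset in $\fkg_{x,-r}/\fkg_{x,(-r)^{+}}$ meets a proper parabolic subalgebra. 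For a depth-$r$ supercuspidal whose unrefined minimal $\SaK$-types at $x$ are elliptic (e.g.\ the depth-one supercuspidals of $\SL_{2}$), no occurring character is trivial there, so $V_{\pi}^{\Gk_{C_{0},r^{+}}}=\{0\}$ even though $\rho(\pi)=r$; your relation $P\star e_{C_{0},r^{+}}=e_{C_{0},r^{+}}$ then gives $\pi(P)\cdot 0=0$ and no conclusion. Nor can you repair this by replacing $C_{0}$ with a vertex $K$ realizing the depth: by \eqref{appendixtwo} one has $e_{E,r^{+}}\star e_{K,r^{+}}=e_{E,r^{+}}$ for $K\subset E$ (the inclusion of the $r^{+}$ groups goes the opposite way to the depth-zero Peter--Weyl situation), so the analogue of statement (i) fails for proper facets of $C_{0}$. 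This is precisely why the paper abandons the (i)--(iii) scheme here and instead takes $v$ fixed by $\Gk_{K,r^{+}}$ for whatever facet $K$ carries invariants, notes $v\in V_{\pi}^{\Gk_{x,r^{+}}}$ for a vertex $x$ of $K$, and invokes the Meyer--Solleveld result (the idempotents satisfy the axioms of Definition 2.1 of [\reMS]) to see that the alternating sum over a large ball is the projection onto $\sum_{x}\pi(e_{x,r^{+}})(V_{\pi})$, which contains $v$. You need this (or some substitute for it) to finish.
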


\begin{proof} \quad As already indicated above, the proof is an adaptation of the proof of Theorem \eqref{mainiwahori}.   
  We fix a base chamber $C_{0}$, and $x_{0} \in \mypfacet{(C_{0})}$.  We show, for any integer $\rho \ge 1$ and $J := \Gk_{x_{0},\rho}$, the convolution $\big( \, {\underset {L \subset \ScptB (\Gk) } \sum} (-1)^{\dim (L)} e_{L,r^{+}} \, ) \star e_{J}$ is in $C^{\infty}_{c}(\Gk )$.  To do this, suppose $D \, ( \, \ne C_{0} \, )$ is a chamber.  It suffices to show the convolution $\big( \, {\underset {K \subset {\mathcal F}_{+}(D) } \sum} (-1)^{\dim (K)} e_{K,r^{+}} \, ) \star e_{J}$ is the zero function provided $\myht_{C_{0}}(D)$ is sufficiently large.  \ To do this, we choose a maximal split torus $\Sk$ so that $C_{0}$ and $D$ belong to the apartment $\ScptA = \ScptA (\Sk )$.  In the proof of Theorem \eqref{mainiwahori}, we considered the set ${\frcR}_{\rho + 2}$ defined in \eqref{recessgroupone}.  It works here too.   Let $c(D)$, with respect to $C_{0}$, be the child faces of $D$, and set{\,}:
\begin{itemize}
\item[(i)] \ \ $D_{+}$ to be the intersection of all the faces of $c(D)$.
\smallskip
\item[(ii)] \ \ ${\mathcal F}_{+}(D)$ to be the set of facets of $D$ which contain $D_{+}$.  
\end{itemize}

\noindent{Suppose} $y \in \mypfacet{(D_{+})}$, and $\psi (y) =r$, i.e., $\Gk_{y,r} \supset \Xk_{\psi}$.  Then, 
\begin{itemize}
\item[(i)] \ \ $(-\psi + 2r)(y) = r$, so $\Gk_{y,r} \supset \Xk_{-\psi+2r}$ too.
\smallskip
\item[(ii)] \ \ $(\psi (y) - \psi (x_{0})) > \rho$; therefore, $(-\psi + 2r)(x) \ge \rho + r \ge r$; thus, $\Gk_{x_{0},\rho} \supset \Xk_{-\psi+2r}${\,},
\end{itemize}
\noindent{and} so,

$$
V_{y} \ := \ {\underset {\psi_{|_{D_{+}}} \equiv {\ } r \ , \ \mygrad(\psi) > 0} \prod } \Xk_{-\psi+2r} \ 
$$

\noindent{is} a subgroup contained in $\Gk_{x_{0}, \rho}$, and  $V_{y}{\,}\Gk_{y, r^{+}} = \Gk_{D,r^{+}}$. Then,  

$$
\aligned
{\underset 
{K \,  \in  \, {\mathcal F}_{+}(D)}
\sum } 
(-1)^{{\text{\rm{dim}}}(K)} \ e_{K,r^{+}} &\ \star \ e_{V_{y}} \ =  {\underset 
{K  \,  \in  \, {\mathcal F}_{+}(D)}
\sum } \ (-1)^{{\text{\rm{dim}}}(K)} \ ( \ e_{K,r^{+}} \ \star \ e_{\Gk_{y,r^{+}}} ) \ \star \ e_{V_{y}} \\
&\ = {\underset 
{K  \,  \in  \, {\mathcal F}_{+}(D)}
\sum } \ (-1)^{{\text{\rm{dim}}}(K)} \ e_{K,r^{+}} \ \star \ ( \ e_{\Gk_{y,r^{+}}} \ \star \ e_{V_{y}} \ ) \\
&\ = {\underset
{K  \,  \in  \, {\mathcal F}_{+}(D)}
\sum } \ (-1)^{{\text{\rm{dim}}}(K)} \ e_{K,r^{+}} \ \star \ ( \, e_{(\Gk_{y,r^{+}} \, V_{y} )} \, ) \\
&\ = {\underset
{K  \,  \in  \, {\mathcal F}_{+}(D)}
\sum } \ (-1)^{{\text{\rm{dim}}}(K)} \ e_{K,r^{+}} \ \star \ ( \, e_{\Gk_{D,r^{+}}} \, ) \\
&\ =  \ {\text{\rm{ zero function}}} \, . \\ 
\endaligned
$$

\noindent{So}, under the assumption $D \subset \frcR_{(\rho+2)}$, we see

$$
\aligned
{\underset 
{K  \,  \in  \, {\mathcal F}_{+}(D)}
\sum } 
(-1)^{{\text{\rm{dim}}}(K)} \ e_{K,r^{+}} &\ \star e_{\Gk_{x_{0},\rho}} \ =  \ {\text{\rm{ zero function}}} \, . \\ 
\endaligned
$$

\medskip

To address the situation when $D$ is in $\Sk (C_{0} , \Phi^{+} ) \backslash {\frcR}_{\rho + 2}$ we partition $S(C_{0}, \Phi^{+} )$ using the subsets in \eqref{recessgroupsix} with $k \, = \, \rho+2$.  With the obvious modifications, the proof there then applies.  We conclude \eqref{recessgroupfour} holds provided  $\myht_{C_{0}}(D)$ is sufficiently large.  The completion of the proof the Theorem using the `ball of radius m' defined in  \eqref{ballofradiusm} is clear.

\bigskip

It still remains to identify the distribution $P$ of Theorem \eqref{appendixfour} is the depth $r$ projector.  Suppose $( \pi , V_{\pi} )$ is a smooth irreducible representation:

\smallskip

\begin{itemize}
\item[$\bullet$]  Since  $( \pi ( e_{F,r^{+}} )) (V_{\pi})  \subset V^{\Gk_{_{F,r^{+}}}}_{\pi}$, it is clear  $\pi (P)$ is zero  when $\rho (\pi) > r$.   

\smallskip

\item[$\bullet$] Suppose $\rho (\pi ) \le r$.  Choose a nonzero $v \in V_{\pi}$ which is fixed by some $\Gk_{K,r^{+}}$.  We also take $M$ sufficiently large so that $K \subset \myBall (C_{0},M)$, and for all $j \ge 0${\,}:
\smallskip  
$$
\aligned
    {\underset {F \subset \myBall (C_{0},M+j)} \sum } (-1)^{\dim (F)} \, e_{F,r^{+}} \star e_{K,r^{+}} \ &= \ {\underset {F \subset \myBall (C_{0},M)} \sum } (-1)^{\dim (F)} \, e_{F,r^{+}} \star e_{K,r^{+}} \  \\
    &= \ P \star e_{K,r^{+}} \ .
\endaligned    
$$
\smallskip
\noindent{Then}, $\pi (P) (v) = \pi (P \star e_{K,r^{+}} ) (v)$.  The system of idempotents satisfy the three properties axiomatized in Definition 2.1 of [{\reMS}]; hence, 
$$
{\underset {F \subset \myBall (C_{0},M)} \sum } (-1)^{\dim (F)} \, \pi (e_{F,r^{+}}) \quad {\text{\rm{projects to}}} \quad  {\underset {x \in \myBall {(C_{0},M)}^{\text{\rm{o}}} } \sum } \, (\pi (e_{x,,r^{+}})) (V_{\pi}) \ .
$$  

\end{itemize}
\smallskip
\noindent{We} deduce $\pi (P) (v) = v$, and so $P$ must be the depth $r$ projector.

\end{proof}

 
\vskip 0.70in 
 

\section{Acknowledgments}

\medskip

Parts of this work was done during visits by the first and third authors to the University of Utah Mathematics Department in Summer 2015, by the first author to the HKUST Mathematics Department in Summer 2016, and by the third author to the University of Oxford Mathematics Department in Summer 2016.  The Departments are thanked for their hospitality.

 
\vskip 0.70in 
 

\vfill
\vfill \eject
 
}} 
 
\end{document}